\DeclarePairedDelimiter{\floor}{\lfloor}{\rfloor}
\theoremstyle{plain}
\newtheorem{theorem}{Theorem}[section]
\newtheorem{corollary}[theorem]{Corollary}
\newtheorem{lemma}[theorem]{Lemma}
\newtheorem{proposition}[theorem]{Proposition}
\newtheorem{condition}[theorem]{Condition}
\theoremstyle{definition}
\newtheorem{definition}[theorem]{Definition}
\newtheorem{algorithm}[theorem]{Algorithm}
\theoremstyle{remark}
\numberwithin{equation}{section}
\newcommand{\N}{\mathbbm{N}}
\newcommand{\R}{\mathbbm{R}}
\newcommand{\Z}{\mathbbm{Z}}
\newcommand{\I}{\mathbbm{1}}
\newcommand{\p}{\mathbbm{P}}
\newcommand{\cA}{\mathcal{A}}
\newcommand{\cB}{\mathcal{B}}
\newcommand{\cD}{\mathcal{D}}
\newcommand{\cG}{\mathcal{G}}
\newcommand{\cF}{\mathcal{F}}
\newcommand{\cH}{\mathcal{H}}
\newcommand{\cI}{\mathcal{I}}
\newcommand{\cL}{\mathcal{L}}
\newcommand{\cN}{\mathcal{N}}
\newcommand{\cO}{\mathcal{O}}
\newcommand{\cV}{\mathcal{V}}
\newcommand{\fS}{\mathfrak{S}}
\newcommand{\E}[1]{\mathbbm{E}\left [ \, #1 \, \right ]}
\renewcommand{\epsilon}{\varepsilon}
\renewcommand{\phi}{\varphi}
\newcommand{\pspace}{(\Omega,\cA,\p)}
\newcommand{\intd}[1]{\,\mathrm{d}#1}
\newcommand{\norm}[1]{\left\lVert #1 \right\rVert}
\newcommand{\scalar}[2]{\left\langle #1,#2 \right\rangle}
\newcommand{\argmin}[1]{\operatorname*{arg\,min}_{#1}\,}
\newcommand{\dzn}[1]{d_{\infty}\left( #1 \right) }
\newcommand{\darg}{\,\cdot\,}
\newcommand{\supp}{\text{supp}\,}
\journal{the Journal of Statistical Planning and Inference}
\begin{document}
\begin{frontmatter}

\title{Orthogonal Series Estimates on Strong Spatial Mixing Data}
\author{Johannes T. N. Krebs\footnote{This research was supported by the Fraunhofer ITWM, 67663 Kaiserslautern, Germany which is part of the Fraunhofer Gesellschaft zur F{\"o}rderung der angewandten Forschung e.V.}
}
\ead{krebs@mathematik.uni-kl.de}
\address{Department of Mathematics, University of Kaiserslautern, Erwin-Schr\"{o}dinger-Stra{\ss}e, 67653 Kaiserslautern.}

\begin{abstract}
We study a nonparametric regression model for sample data which is defined on an $N$-dimensional lattice structure and which is assumed to be strong spatial mixing: we use design adapted multidimensional Haar wavelets which form an orthonormal system w.r.t. the empirical measure of the sample data. For such orthonormal systems, we consider a nonparametric hard thresholding estimator. We give sufficient criteria for the consistency of this estimator and we derive rates of convergence. The theorems reveal that our estimator is able to adapt to the local smoothness of the underlying regression function and the design distribution. We illustrate our results with simulated examples.
\end{abstract}

\begin{keyword}
Empirical orthonormalization  \sep $L^2$-consistency \sep Nonparametric nonlinear regression \sep   Rate of convergence \sep Strong spatial mixing \sep Spatial processes\MSC[2010] 62G08  \sep 62H11  \sep 60G60 
\end{keyword}
\end{frontmatter}

\section{Introduction}\label{Introduction}

In this article we study penalized nonparametric sieve estimators for spatial sample data which features a certain dependence structure: the data is given by the random field $(X,Y)$ which is indexed by a set $S$ of spatial coordinates and which is strong spatial mixing. Here, we take $S = \Z^N$ for some lattice dimension $N \in \N_+$ but our discussion is not limited to that regular case; we could also allow that the random field is only partially observed at some $S\subseteq \Z^N$.\\
The random variables $X(s)$ are $\R^d$-valued and have equal marginal distributions denoted by the probability measure $\mu$ on the Borel-$\sigma$-algebra of $\R^d$, $\cB(\R^d)$. The $Y(s)$ are $\R$-valued, square integrable and satisfy the equation
\begin{align}
		Y(s) = m(X(s)) + \varsigma(X(s))\, \epsilon(s), \text{ for each } s \in S \label{lsqI}
\end{align}
where $m, \varsigma: \R^d \rightarrow \R$ are functions in $L^2(\mu)$. The error terms $\epsilon(s)$ are distributed with mean zero and variance one, i.e., $\epsilon(s) \sim (0,1)$. Furthermore, they are independent of $X$ and have identical marginal distributions but may be dependent among each other such that the strong spatial mixing property remains valid. We emphasize that there is no requirement on the distribution of the error terms, e.g., a Gaussian distribution is not necessary. The same is true for the distribution of the regressors $X(s)$, it is not required that these admit a density with respect to the Lebesgue measure.\\
Thus, we apply the classical heteroscedastic regression model to spatial data under minimal assumptions on the random field $(X,Y)$. An introduction to spatial statistics is given by \cite{cressie1993statistics}. In particular, Markov random fields are studied in the monograph of \cite{kindermann1980markov}.\\  
Nonparametric regression on spatial data has gained importance, in particular, the case where the data is given on a regular lattice structure: \cite{hallin2004local} study a local linear kernel estimator under a strong spatial mixing condition. \cite{li2016nonparametric} considers a nonparametric regression estimator for such lattice data which is constructed with wavelets.\\ 
In this article we consider a nonparametric estimator for lattice data, too, however, we do this with a penalized orthogonal series estimator. \cite{baraud2001adaptive} consider penalized estimators for $\beta$-mixing time series $\{(X_t,Y_t):t\in\N\}$ where the regressors $X_t$ are multidimensional. Orthogonal series estimators have been studied for various data situations: for a real-valued one-dimensional regressor $X$ a popular choice are piecewise polynomials. \cite{comte2004new} study an algorithm for the construction in the case of fixed design regression. \cite{kohler2003nonlinear} gives a generalization to random design regression under the assumption that the error terms are bounded. \cite{akakpo2011inhomogeneous} use piecewise polynomials for conditional density estimation of a $\beta$-mixing time series $\{(X_t,Y_t):t\in\N\}$. In another article \cite{kohler2008nonlinear} considers Haar wavelets to construct an orthogonal series estimator in the case of a multivariate regressor $X$ under the assumption of sub Gaussian error terms and a bounded design distribution of $X$. The ideas and results obtained in the latter can be considered as the starting point for our analysis.\\
Before we give a more thorough introduction to the results of this article, we mention that there exist alternative approaches to construct orthogonal series estimators for a random (univariate) regressor $X$. \cite{kerkyacharian2004regression} consider warped wavelets in the case where the regressor $X$ admits a density on a compact real interval. \cite{kulik2009wavelet} use this concept to study time series with long range dependence errors. \cite{FrankeSachs} construct a soft thresholding regression estimator for univariate i.i.d.\ sample data. They derive rates of convergence for Hölder continuous regression functions in a model where the design variables $X$ are supposed to admit a density which has bounded support. \cite{Girardi1997} show that design adapted Haar wavelets can generate even a multiresolution analysis in the one-dimensional case. \cite{masry2000wavelet} studies $\alpha$-mixing stationary processes and derives rates of convergence for regression functions which belong to a multidimensional Besov space.\\
In this article, we transfer the ideas of \cite{kohler2008nonlinear} to the spatial setting where the sample data is no longer independently distributed but where the dependence vanishes with an increasing lattice distance between the random variables. We relax most restrictions which are usually made in the context of nonparametric regression on dependent data. Most notably, the design distribution (which is the distribution of the $X(s)$) does not need to be known and is not restricted to a bounded domain. Furthermore, the distribution does not need to admit a density w.r.t.\ the Lebesgue measure as it is for instance assumed in \cite{hallin2004local}. \cite{li2016nonparametric} assumes in the spatial wavelet regression model that the $X(s)$ admit a density which is known. We do not do this here. Additionally, we do not require the error terms in the regression model to be bounded or sub Gaussian; we develop our results here for a general class of error terms which satisfies a certain condition on the tail distribution. In addition in order to show that the estimator is consistent in the $L^2$-sense, we do not need a bounded regression function. \\
In this paper we establish general consistency results for our nonparametric regression estimators and we derive rates of convergence. Since our assumptions on the distribution of the regressor $X$ and on the error terms $\epsilon$ are less restrictive than usual, we obtain, however, a sub-optimal rate of convergence, when compared to the results of \cite{stone1982optimal}. We shall discuss this further in the corresponding parts of the article. \\
The remainder of the paper is organized as follows: we give the notation and definitions which we use throughout the article in Section~\ref{Section_NotationAndDefinitions}. In Section~\ref{Section_NonlinRegWavelets} we present the main results: we give a general consistency theorem for our nonparametric estimator and derive a rate of convergence theorem. In Section~\ref{Section_SimulationExamples}, we give numerical applications and make the comparison with i.i.d.\ data. The proofs of our theorems are presented in Section~\ref{Section_Proofs}. \ref{Appendix_ExpInequalities} and \ref{AppendixB} contain certain deferred proofs and further background material which proves to be useful in the broader context of random fields. Furthermore, we provide in a supplemental \cite{krebsOrthogonalSuppl} some technical results concerning our simulation procedure.

\section{Notation and Definitions}\label{Section_NotationAndDefinitions}
We work on a probability space $\pspace$ which is equipped with a generic random field $Z$. $Z$ is $\R^d$-valued and is indexed by $\Z^N$, for both $N, d \in \N_+$. This means $Z = \{ Z(s): s\in \Z^N \}$ and $Z(s): \Omega\rightarrow \R^d$ is Borel-measurable for each $s\in \Z^N$. The random field $Z$ is stationary (or homogeneous) if for each translation $t\in\Z^N$ and for each collection of finite points $s_1,\ldots,s_n$ the joint distribution of $\{ Z(s_1+t),\ldots,Z(s_n+t) \}$ coincides with the joint distribution of $\{ Z(s_1),\ldots,Z(s_n) \}$, i.e.,
$$ 	\cL\left(	 Z(s_1+t),\ldots,Z(s_n+t) 	\right) = \cL\left(	 Z(s_1),\ldots,Z(s_n) 	\right). $$
We denote by $\norm{\,\cdot\,}_{p}$ the Euclidean $p$-norm on $\R^N$ and by $d_{p}$ the corresponding metric for $p\in[1,\infty]$ with the extension $d_p(I,J) \coloneqq \inf\{	d_p(s,t), s\in I, t\in J	\}$ for subsets $I,J$ of $\R^N$. Furthermore, write $s \le t$ for $s,t \in \R^N$ if and only if for each $1\le k \le N$ the single coordinates satisfy $s_k \le t_k$. We denote the indicator function of a set $A$ by $\I\{A\}$ and abbreviate for a subset $I$ of $\Z^N$ by $\cF(I) = \sigma\{ Z(s): s\in I\}$ the $\sigma$-algebra generated by the $Z(s)$, $s \in I$.\\ 
As a measure of spatial dependence we use the  $\alpha$-mixing coefficient. This coefficient is introduced by \cite{rosenblatt1956central}; in the spatial context, it is given for $k\in\N$ as
\begin{align}\label{StrongSpatialMixing}
	\alpha(k) \coloneqq \sup_{ \substack{I, J \subseteq \Z^N,\\ \dzn{I,J}\ge k }} \sup_{ \substack{ A \in \cF(I),\\ B \in \cF(J) } } \left| \p(A\cap B)-\p(A)\p(B)		\right|
\end{align}
The random field $Z$ is strong spatial mixing if $\alpha(k) \rightarrow 0$ for $k \rightarrow \infty$. Furthermore, the random field is strong spatial mixing with exponentially decreasing mixing coefficients if there are $c_0$ and $c_1$ in $\R_+$ such that the $\alpha$-mixing coefficients can be bounded as $\alpha(k) \le c_0 \exp(-c_1 k)$. Note that there exist even stronger measures of dependence, we do not need these here. A comprehensive survey on dependence measures is given in \cite{bradley2005basicMixing}.\\
We denote by $e_N \coloneqq (1,\ldots,1)^T$ the $N$-dimensional vector whose entries are equal to 1. For an $N$-dimensional cube in $\Z^N$ that is spanned by two points $a,b \in \Z^N$, $a\le b$, we write $[a..b] = \{s\in\Z^N: a \le s\le b\}$. For instance, let $n=(n_1,\ldots,n_N )^T\in \N_+^N$ then we define the observation domain $I_n$ as
$$ I_n \coloneqq [e_N .. n] =  \left\{s\in \Z^N: e_N \le s \le n \right\}. $$
The observation domain is indexed by a sequence $\left(n(k): k\in\N_+ \right) \subseteq \N^N$ which is increasing in the sense that $n(k) \le n(k+1)$. This sequence has the additional properties that
\begin{align}\label{Cond_Sequence}
	\inf_{k\in\N_+} \frac{ \min\{ n_i(k): i=1,\ldots,N \}}{ \max\{ n_i(k): i=1,\ldots,N \} } > 0
\end{align}
and
\begin{align}\label{Cond_Sequence2}
	\lim_{k\rightarrow\infty} \max\{ n_i(k): i=1,\ldots,N \} = \infty.
\end{align}
Assumption~\eqref{Cond_Sequence2} on the sequence $n(k)$ is obviously needed for a consistent estimator. Assumption~\eqref{Cond_Sequence} is technical and ensures that certain constants exist in the large deviation inequalities which we derive in \ref{Appendix_ExpInequalities}.\\
In order to estimate the coefficients of the hard thresholding estimator, we need a set of functions which is orthonormal w.r.t.\ to the empirical measure $\mu_n = |I_n|^{-1} \sum_{s\in I_n} \delta_{X(s)}$ for a sample of predictor variables $\{X(s): s\in I_n\}$, $I_n \subseteq \Z^N$. Therefore define the empirical scalar product by
\begin{align*}
		\scalar{f}{g}_n \coloneqq \frac{1}{|I_n|} \sum_{s\in I_n} f(X(s) ) g(X(s) ) = \int_{\R^d} f\,g\, \intd{\mu}_n.
\end{align*}
The support of a function $f: \R^d\rightarrow\R$ is $\supp{f} \coloneqq \{ x \in \R^d: f(x) \neq 0 \}$.\\

We summarize our regularity assumptions (on a generic random field $Z$) in a condition which we shall use throughout the rest of this article

\begin{condition}\label{regCond0}
Let $d\in\N_+$. $Z = \left\{Z(s): s\in \Z^N \right\}$ is an $\R^d$-valued random field such that
\begin{enumerate}[label=\textnormal{(\arabic*)}]
\item each $Z(s)$ has the same distribution $\mu$ on $\R^d$. \label{Cond_Distribution}
\item $Z$ is strong spatial mixing with exponentially decreasing mixing coefficients; denote the generic constants which give the bound on the mixing coefficients from \eqref{StrongSpatialMixing} by $c_0$ and $c_1$. \label{Cond_Mixing}
\end{enumerate}
\end{condition}

The assumption on the exponential decay of the mixing rates is quite common, compare for instance \cite{li2016nonparametric}. It is shown in the case of time series by \cite{davydov1973mixing} or \cite{Withers1981} that exponentially decreasing $\alpha$-mixing coefficients are guaranteed under mild regularity conditions.

In the following we assume that there are constants $\kappa_0$, $\kappa_1$ and $\tau$ such that the tail of the error terms $\epsilon(s)$ is bounded for all $z\ge 0$ as
\begin{align}\label{tailCondition}
	\p( |\epsilon(s)| > z ) \le \kappa_0 \exp\left( -\kappa_1\,z^{\tau} \right), \text{ for some } \kappa_0,\kappa_1, \tau > 0.
	\end{align}
The error terms are sub Gaussian if \eqref{tailCondition} is satisfied for a $\tau \ge 2$. \cite{kohler2008nonlinear} derives a rate of convergence for sub Gaussian error terms in a regression model with i.i.d.\ data. We shall in the following give a bound for the general case where $\tau > 0$.

%+++++++++++++++++++++++++++++++++++++++++++++++++++++++++++++++++++++++++++++++
%+++++++++++++++++++++++++++++++++++++++++++++++++++++++++++++++++++++++++++++++
%+++++++++++++++++++++++++++++++++++++++++++++++++++++++++++++++++++++++++++++++
%+++++++++++++++++++++++++++++++++++++++++++++++++++++++++++++++++++++++++++++++

\section{Nonlinear Hard Thresholded Regression}\label{Section_NonlinRegWavelets}
We outline in detail the estimation procedure in the first part of this section. In the second part, we present the results.

\subsection{The Estimation Procedure}
We estimate the conditional mean function $m$ with the nonlinear hard thresholding estimator which belongs to the class of sieve estimators (cf. \cite{grenander1981abstract}). Let there be given a sequence of linear spaces $\cF_k$ with functions $g_i : \R^d\rightarrow\R$ such that,
\begin{align}\label{LinSpaceGeneral}
		\cF_k = \left \{ \sum_{i=1}^{K^*} a_i g_i: a_i \in \R \right\}, \quad \cF_k \subseteq \cF_{k+1} \text{ and } \bigcup_{k\in\N_+} \cF_k \text{ is dense in } L^2(\mu).
\end{align}
The dimension of the linear spaces $K^*\in\N_+$ depends on the index $k\in\N_+$ and is defined for each particular case.\\

In particular, in this paper we shall consider a sequence $\mathcal{F}_k$ generated by Haar wavelets in $d$-dimensions, cf. \cite{benedetto1993wavelets}: let $\xi_0=1_{[0,1)}$ be a Haar scaling function on the real line $\R$ together with the Haar mother wavelet $\xi_1=1_{[0,1/2)}-1_{[1/2, 1)}$. Define the diagonal matrix $M$ by $M \coloneqq 2 \text{ diag}(1,\ldots,1)\in\R^{d\times d}$. Set $|M| \coloneqq \operatorname{det}(M) = 2^d$. Denote the mother wavelets as pure tensors by 
\begin{align}\label{MultidimHaarWaveletsMother}
		\Psi_v \coloneqq \xi_{v_1} \otimes \ldots \otimes \xi_{v_d} \text{ for } v \in \{0,1\}^d\setminus \{0\}.
\end{align}
The scaling function is given by
\begin{align}\label{MultidimHaarWaveletsFather}
	\Phi \coloneqq \Psi_0 \coloneqq \bigotimes_{i=1}^d \xi_0.
	\end{align}
One can show that $\Phi$ and the $\R$-linear spaces $V_j \coloneqq \left\langle \Phi (M^j \cdot - \gamma): \gamma \in \Z^d \right\rangle$ generate a multiresolution analysis of $L^2(\lambda^d)$ and that the functions $\Psi_v$, $v\neq 0$, form an orthonormal basis in the sense that
\begin{align*}
		&L^2(\lambda^d) = V_0 \oplus \left( \bigoplus_{j\in\N} W_j \right) = \bigoplus_{j\in\Z} W_j \\
		&\qquad\qquad \text{ where } W_j = \left\langle\, |M|^{j/2}\, \Psi_v\left( M^j\darg - \gamma \right): \gamma \in \Z^d,\, v \in \{0,1\}^d \setminus \{0\} \, \right\rangle.
\end{align*}
Furthermore, this wavelet family is dense in $L^2(\tilde{\mu})$ for each probability measure $\tilde{\mu}$ on $\cB(\R^d)$, (a proof is given in \cite{krebs2016nonparametric}).\\

We come to the construction of the function space from \eqref{LinSpaceGeneral}: let $j_0$ and $j_1$ be two integers. Here $j_0$ is the coarsest resolution and $j_1$ is the finest resolution for the wavelets in the function space. The finest resolution $j_1 $ is a function of $k$ and increases with the sample size, we precise this below. Define the rescaled and shifted father and the mother wavelets by
$$\Phi_{j_0,\gamma} \coloneqq |M|^{j_0/2}\, \Phi\left(M^{j_0}\cdot-\gamma\right) \text{ and } \Psi_{v,j,\gamma} \coloneqq |M|^{j/2}\, \Psi_v\left(M^{j}\cdot-\gamma\right), $$
for $v=1,\ldots,2^d-1$ and $j\ge j_0$. Now, consider the $\R$-linear spaces 
\begin{align}\begin{split}\label{HaarWaveletsLinSpace}
		\cF_k &\coloneqq \Bigg\{ \sum_{\gamma\in A_{j_0,k} } a_{j_0,\gamma}\, \Phi_{j_0,\gamma} + \sum_{v=1}^{2^d-1} \sum_{j=j_0}^{j_1(k)-1} \sum_{\gamma \in A_{j,k} } b_{v,j,\gamma} \, \Psi_{v,j,\gamma}: a_{j_0,\gamma}, b_{v,j,\gamma} \in \R,\\
		&\qquad\qquad\qquad j=j_0,\ldots,j_1 - 1, v=1,\ldots,2^d-1, \gamma\in A_{j,k} \Bigg\},
\end{split}\end{align}
together with the index sets $A_{j,k} \subseteq \Z^d$. These index sets are given by 
$$A_{j, k} \coloneqq \left\{ -2^{j-j_0} w_k ,\ldots, 2^{j-j_0} w_k - 1 \right\}^d $$
for a non decreasing sequence $(w_k:k\in\N_+) \subseteq \N_+$. Note that with the definitions of the $A_{j,k}$ we have that the father wavelets cover the cube $2^{-j_0} [-w_k, w_k)^d$ which has dyadic edges for each $j \ge j_0$, i.e.,
\begin{align}\label{CummulativeSupport}
	\bigcup_{\gamma\in A_{j,k} } \supp{\Phi_{j,\gamma}} = 2^{-j_0} [-w_k,w_k)^d =: D_k, \quad j\ge j_0.
	\end{align}
If the distribution of the $X(s)$ is bounded, it suffices to take a constant sequence such that $\bigcup_{\gamma \in A_{j_0, k}} \supp{\Phi_{j_0,\gamma}}$ covers the domain of the distribution. Otherwise, we choose $w_k$ as increasing; we precise this in the subsequent theorems. \\

In the following, we give a construction of $d$-dimensional wavelets which are orthonormal w.r.t.\ the empirical measure for $d\ge 1$. Therefore, we first consider in a short example on how design adapted wavelets can be constructed in one dimension, cf. \cite{FrankeSachs}. Let $I=[a,b) \subseteq \R$ be a finite interval and let $I_{j,1},\ldots,I_{j,2^j}$ be a dyadic partition of $[a,b)$ for each $j\ge 0$, i.e., 
$$I_{j,\gamma} = \left[a+\frac{b-a}{2^j}(\gamma-1), a+\frac{b-a}{2^j} \gamma \right ) \text{ for } \gamma = 1,\ldots,2^j.$$
Then, define the father and the mother wavelets for $j\ge 0$ and $\gamma \in \left\{1,\ldots,2^j \right\}$ as
\begin{align*}
		&\phi_{j,\gamma} \coloneqq \mu_n(I_{j,\gamma})^{-1/2}\, \I\{I_{j,\gamma}\} \text{ and }  \\
		&\psi_{j,\gamma} \coloneqq \mu_n(I_{j,\gamma})^{-1/2} \left( \mu_n( I_{j+1,2\gamma+1})^{1/2} \phi_{j+1,2\gamma} - \mu_n( I_{j+1,2\gamma})^{1/2} \phi_{j+1,2\gamma + 1}		\right). 
\end{align*}
The $\phi_{j,\gamma}$ and the $\psi_{j,\gamma}$ are both orthonormal. Additionally the mother wavelets $\psi_{j,\gamma}$ are balanced w.r.t.\ the empirical measure, which means that $\int_{\R} \psi_{j,\gamma} \intd{\mu_n} =0$ for a sample $X_1,\ldots,X_n \subseteq \R$.\\
Next, we outline in detail how to construct orthonormal and balanced wavelets in $L^2(\mu_n)$ for higher dimensional data. However, these wavelets no longer fulfill the usual scaling equations which are satisfied in the case of the Lebesgue measure because the empirical measure $\mu_n$ on $\R^d$ is not a product measure if $d>1$ and a partition of $\R^d$ into Cartesian products of intervals in general does not satisfy that each partition element contains the same number of sample points. \\
As the father wavelets partition $\R^d$, we can use the following construction: let there be given a cube $C=C_1\times\ldots\times C_d$ where the $C_i=[a_i,b_i) \subseteq \R$ are finite intervals. Let the family of standard Haar wavelets $\{ \Psi_k: k\in\{0,1\}^d \}$ from Equations~\eqref{MultidimHaarWaveletsMother} and \eqref{MultidimHaarWaveletsFather} be translated and rescaled such that $\Phi = \Psi_0 = \I\{C\}$.\\
Let $\iota$ be an enumeration of the index set $\{0,1\}^d $ as follows, 
\[
		\iota(0) = (0,\ldots,0), \quad \iota(1) = (1,0,\ldots,0),\quad \iota(2) = (0,1,0,\dots,0), \ldots, \iota\left(2^d-1 \right) = (1,\ldots,1).
\]
The enumeration $\iota$ defines a linear ordering of the $2^d$ basis functions, e.g., the first basis function is the one with index $\iota(1)$, the last one is the one with index $\iota\left(2^d -1 \right)$.\\
Define the first function on $C$ by $f_0 \coloneqq \mu_n(C)^{-1/2} \I\{C\}$. Denote the left half of an interval $C_i$ by $C_i^L \coloneqq [a_i, (a_i+b_i)/2)$ and the right half by $C_i^R \coloneqq [(a_i+b_i)/2, b_i)$. Furthermore set $C_{-i} \coloneqq \prod_{ j\neq i } C_j$. Define the second function which is the first balanced (and orthonormal) wavelet as
\begin{align}\label{EmpiricalOrthonormalization}
	f_1 &\coloneqq \sqrt{ \frac{ \mu_n \left( C_1^R  \times C_{-1}\right) }{ \mu_n\left( C_1^L \times C_{-1} \right)  \mu_n(C)} } 1\left\{ C_1^L \times C_{-1} \right\} - \sqrt{ \frac{\mu_n\left( C_1^L \times C_{-1} \right) }{  \mu_n \left( C_1^R \times C_{-1} \right) \mu_n(C) } } 1\left\{  C_1^R \times C_{-1} \right\}.
\end{align}
Assume that w.r.t.\ $\mu_n$ the functions $f_0,\ldots,f_u$ are orthonormal and that $f_1,\ldots,f_u$ are additionally balanced w.r.t.\ $\mu_n$ for $1\le u \le 2^d-1$. Then the function $\tilde{f}_{u+1}$ which one obtains from the Gram-Schmitt rule
\begin{align*}
		\tilde{f}_{u+1} \coloneqq \Psi_{ \iota(u+1) } - \scalar{\Psi_{ \iota(u+1) } }{f_u}_n f_u - \ldots - \scalar{\Psi_{ \iota(u+1) } }{f_1}_n f_1 - \scalar{\Psi_{\iota(u+1)} }{f_0}_n f_0
\end{align*}
is balanced (and orthogonal to $\langle f_0,\ldots,f_u\rangle$). If $\tilde{f}_{u+1}$ is not zero w.r.t $\mu_n$, normalize this function and obtain $f_{u+1}$. If one repeats this step until $u=2^d-1$, one obtains $\widetilde{K} \le 2^d-1$ balanced and orthonormal wavelets $f_1,\ldots,f_{\widetilde{K}}$. The index $\widetilde{K}$ gives the total number of wavelets and can be smaller than $2^d-1$ because some wavelets can be zero in $L^2(\mu_n)$. \\
Note that  each of these functions is constant on a subcube 
$$C_S = \bigtimes_{i=1}^d C_i^{S_i}$$ 
with $S \in \{L,R\}^d$. Hence, executing the same procedure on $C_S$, one obtains a new set of $\widetilde{K}_S \le 2^d-1$ orthonormal and balanced wavelets, call them $g_1,\ldots,g_{\widetilde{K}_S}$. The $g_i$ are orthogonal to $f_1,\ldots,f_{\widetilde{K} }$, too, because the latter are constant on the domain of the $g_i$. Consequently, if one repeats this procedure (ad infinitum) one can construct a wavelet family on $C$ (and its dyadic subcubes) which is orthonormal.\\

In the last step, we consider a fixed resolution index $j_0$ and choose the partition of $\R^d$ which is given by the collection of cubes of $\{ C_\gamma, \gamma\in \Z^d\}$, where each $C_\gamma = 2^{-j_0} [\gamma, \gamma + e_N)$ is a dyadic subcube. We choose the wavelet family on entire $\R^d$ which is orthonormal w.r.t.\ the empirical measure $\mu_n$. We denote this family for a fixed $j_0$ by
$$\left\{f_{0,j_0,\gamma}, f_{v,j,\gamma}: v=1,\ldots,2^d-1, j\ge j_0, \gamma\in\Z^d \right\}.$$
In particular, the following equality for the father wavelets is true
$$f_{0,j_0,\gamma} = \I\left\{ 2^{-j_0}[ \gamma,\gamma+e_N) \right\} \mu_n\left( 2^{-j_0}[ \gamma,\gamma+e_N) \right)^{-1/2} =   \mu_n\left( 2^{-j_0}[ \gamma,\gamma+e_N) \right)^{-1/2} / 2^{j_0 d /2} \; \Phi_{j_0,\gamma}.$$
Then, upon replacing $\Phi_{j_0,\gamma}$ and $\Psi_{v,j,\gamma}$ by the orthonormal counterparts $f_{0,j_0,\gamma}$ and $f_{v,j,\gamma}$, we have for the function spaces from Equation \eqref{HaarWaveletsLinSpace} the equality
\begin{align}\label{HaarWaveletsLinSpace2}
		\cF_k &= \left\langle f_{0,j_0,\gamma}, f_{v,j,\gamma}: v=1,\ldots,2^d-1, j=j_0,\ldots,j_1 - 1, \gamma \in A_{j,k} \right\rangle  \text{ in } L^2(\mu_n)\quad a.s.
\end{align}
Thus, this space is spanned by at most $K^*(k) = (2\cdot 2^{j_1(k)-j_0} \, w_k )^d$ wavelets.\\
\cite{kohler2008nonlinear} constructs an alternative orthonormal basis with Haar wavelets, which has as well the property that the functions are balanced w.r.t.\ $\mu_n$, however, each function vanishes on a larger set than our corresponding function.

We introduce some extra notation which will help us to precise the rates of convergence given below. We have from \eqref{CummulativeSupport} and by the definition of the index set $A_{j,k}$ that the support of the father wavelets (and of the mother wavelets) from the function space $\cF_k$ is $D_k = \bigcup_{\gamma\in A_{j_0,k}} \supp{ \Phi_{j_0,\gamma}} = 2^{-j_0}[-w_k,w_k)^d$. We define inductively for $D_k$ sets of partitions $\prod_u$ for $1\le u \le u_{max}$, where the maximal index $u_{max}$ is given by
\begin{align}\label{uMax}
	u_{max} \coloneqq 1+ (2w_k)^d [ (2^{d(j_1-j_0)}-1)/(2^d-1) ].
	\end{align}
The reason for this definition is explained in the next lines. Each $\prod_u$ contains a family of partitions of $D_k$ as follows: set $\prod_1 \coloneqq \{ \supp{\Phi_{j_0,\gamma}}: \gamma \in A_{j_0,k} \}$. Hence, $\prod_1$ contains a single partition. Let $\prod_1,\ldots,\prod_u$ be constructed. Then $\pi \in \prod_{u+1}$ if and only if there is a partition $\pi'$ of $D_k$ in $\prod_u$ and a dyadic cube $C$ such that
\[
	\pi = \pi' \setminus \{ C\} \cup \left\{ C_1^{u_1} \times \ldots C_d^{u_d} : u_i \in \{L,R\}  \,\right\}
	\]
and each element of the partition $\pi$ can be written as $ E_1\times \ldots \times E_d$ where the $E_i$ are intervals of a length greater or equal than $2^{- j_1 }$. This means $\prod_{u+1}$ consists of all partitions of $\prod_u$ which are refined by partitioning exactly one element into $2^d$ equivolume cubes and each partition element has a diameter w.r.t.\ the $\infty$-norm of at least $2^{- j_1 }$. Note that $u_{max}$ is the maximal index such that $\prod_{u+1}$ contains finer partitions than $\prod_u$ for all $u< u_{max}$. Additionally, $\prod_{u_{max}}$ contains a single partition, namely,	$\prod     _{u_{max}} = \{ \supp{\Phi_{j_1,\gamma}}: \gamma \in A_{j_1,k} \}$.\\
We denote the functions which are constant w.r.t.\ a partition $\pi \in \prod_u$ for some $1\le u \le u_{max}$ by $\cF_c \circ \pi$. This finishes our construction of the function space $\cF_k$ with orthonormal wavelets.\\

In the following, we consider again the more general case that the collections $\cF_k$ are given as the linear span of $K^*$ functions $g_1,\ldots,g_{K^*}$ as in \eqref{LinSpaceGeneral}. We apply the Gram-Schmitt orthonormalization algorithm to the functions $g_1,\ldots,g_{K^*}$ as in \eqref{EmpiricalOrthonormalization}. Thus, we obtain as in the special case for the Haar wavelets a set of functions $f_1, \ldots, f_{\tilde{K} } \in \cF_k$ such that
$$\cF_k = \langle f_1,\ldots,f_{ \tilde{K} } \rangle \text{ in } L^2(\mu_n) \quad a.s.$$
Note that some functions might be zero w.r.t.\ the empirical measure, so $\tilde{K} \le K^*$ $a.s.$ Furthermore, these (random) functions satisfy per construction for each $1\le u \le K^*$ the relation
\[
		\langle f_1,\ldots, f_u \rangle = \langle g_1,\ldots, g_u \rangle \text{ in } L^2(\mu_n),
\]
For such an orthonormal system we consider the following estimation procedure: set $J\coloneqq \{ 1,\ldots,\tilde{K} \}$ and define the $|I_n|\times \widetilde{K}$-matrix which contains in column $j$ the function $f_j$ evaluated at the sample data $X(s)$, $s\in I_n$:
$$\cD \coloneqq \Big( \, f_j (X(s) ) \, \Big)_{s\in I_n, j\in J}.$$
By construction $|I_n|^{-1} \cD^T \cD = I$ is the identity matrix. Then define the estimates of the coefficients of the regression function as the projection of $Y$ over $f_j$ in the empirical norm
\begin{align*}
		a^*  = \frac{1}{|I_n|} \cD^T Y \Leftrightarrow a^*_j = \frac{1}{|I_n|} \sum_{s\in I_n} f_j (X(s)) Y(s).
\end{align*}
Let $(\lambda_k: k \in\N) \subseteq \R_+$ be a sequence which converges to zero, we call this sequence the (hard) thresholding sequence. Define the nonlinear thresholded estimator $m_{k,J^*}$ by
\begin{align*}
	J^{*} \coloneqq \left\{ j\in J: |a_j^*| > \lambda_k \right\} \quad \text{and} \quad m_{k, \widehat{J}} \coloneqq \sum_{j\in \widehat{J}} a^*_j f_j  \quad \text{for} \quad \widehat{J} \subseteq J.
\end{align*}
Hence, $m_{k,J^*} = \sum_{j\in J^*} a^*_j f_j$ is the linear combination which consists of exactly those functions $f_j$ whose estimated contribution in terms of $a^*_j$ exceeds the thresholding value $\lambda_k$.
One can show with elementary reasoning that hard thresholding corresponds to $L^0$-penalized least squares, cf. \cite{kohler2003nonlinear}: define the penalizing term for a subset $\widehat{J}$ of $J$ as $pen_k ( \widehat{J} ) \coloneqq \lambda_k^2\, | \widehat{J}|$. Then, $m_{k,J^*}$ satisfies the relation
\begin{align*}
		\frac{1}{|I_n|} \sum_{s\in I_n} |m_{k,J^{*}}(X_s) - Y_s|^2 + pen_k(J^{*}) = \min_{ \widehat{J} \subseteq J} \left\{ \frac{1}{|I_n|}  \sum_{s\in I_n} |m_{k, \widehat{J}}(X_s) - Y_s|^2 + pen_k( \widehat{J}) \right\}.
\end{align*}
In the following, we write for short $m_k \coloneqq m_{k,J^{*}}$ for the minimizing function. Define for $L>0$ the truncation operator as $T_L(y) \coloneqq \max( \min(y,L), -L)$. Let  $\{\beta_k: k \in\N_+\}$ be a real-valued, non decreasing truncation sequence which converges to infinity. In order to make the estimator robust in regions of $\R^d$ where the data $\{X(s): s\in I_n \}$ is sparse, we consider the truncated hard thresholding least-squares estimator
\begin{align}
		\hat{m}_k \coloneqq T_{\beta_k} \, m_k. \label{lsqIII}
\end{align}
Furthermore, for a function $f\in \cF_k$, which has a unique representation w.r.t.\ the orthonormalized functions $\{ f_1,\ldots,f_{\tilde{K}} \}$ in $L^2(\mu_n)$, we consistently extend the definition of the penalizing term and write $pen_k(f)$ for the number of nonzero coefficients in this representation multiplied by $\lambda_k^2$. Then, $pen_k(f)$ is stochastic and bounded by $K^*(k)\,\lambda_k^2$.

\subsection{Consistency and Rate of Convergence - Results}
The estimator $\hat{m}_k$ is consistent under the conditions in the next theorem:

\begin{theorem}[Consistency]\label{ConsisNonlinReg}
Assume the random field $\{ X(s),Y(s) : s\in\Z^N\}$ satisfies Condition~\ref{regCond0} and Equation \eqref{lsqI} for some functions $m$ and $\varsigma$ in $L^2(\mu)$. Let $n(k)$ be an increasing sequence in $\N^N$ which defines the observation domain and which satisfies \eqref{Cond_Sequence} and \eqref{Cond_Sequence2}. Let the function classes be given as linear spaces which are as in \eqref{LinSpaceGeneral}. In particular, if the spaces are given by \eqref{HaarWaveletsLinSpace} and \eqref{HaarWaveletsLinSpace2} and if the distribution of the $X(s)$ is unbounded, let $\lim_{k\rightarrow \infty} w_k = \infty$.\\
If both
\[
		 K^*(k) \, \lambda_k^2 \rightarrow 0 \text{ and } 	\beta_k^2 \, K^*(k) \, \log \beta_k \prod_{i=1}^{N} \log n_i(k) \Bigg/ \left(		\prod_{i=1}^N n_i(k) \right)^{1/(N+1)} \rightarrow 0 \text{ as } k \rightarrow \infty,
\]
then the estimator $\hat{m}_k$ is weakly universally consistent in the sense that
\[
	\lim_{k\rightarrow \infty} \E{ \int_{\R^d} |\hat{m}_k - m|^2 \,\intd{\mu} } = 0.
\]
Note that if the function spaces are given by \eqref{HaarWaveletsLinSpace} and \eqref{HaarWaveletsLinSpace2}, then $K^*(k) = (2^{j_1(k) - j_0(k) +1} w_k)^d$.\\
Moreover, if in addition $\{ Y(s): s\in\Z^N \}$ is stationary and if in addition for some positive $\delta > 0$
\[
	\beta_k^2 \, (\log k)^{1+\delta} \prod_{i=1}^{N}  \, \log n_i(k) \Bigg/ \left(		\prod_{i=1}^N n_i(k) \right)^{1/(N+1)} \rightarrow 0 \text{ as } k \rightarrow \infty,
\]
then the nonlinear wavelet estimator $\hat{m}_k$ is strongly universally consistent in the sense that
\[
	\lim_{k\rightarrow \infty} \int_{\R^d} |\hat{m}_k - m|^2 \,\intd{\mu}  = 0 \quad a.s.
\]

\end{theorem}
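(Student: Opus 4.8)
\medskip
\noindent\textbf{Proof strategy and reductions.} The plan is to follow the by-now-classical scheme for consistency of penalized least-squares sieve estimators, cf.\ \cite{kohler2003nonlinear,kohler2008nonlinear}, replacing every appeal to i.i.d.\ concentration by the Bernstein-type large deviation inequalities for strong spatial mixing random fields established in \ref{Appendix_ExpInequalities}. First I reduce the problem. Since $m\in L^2(\mu)$ and $\beta_k\uparrow\infty$, dominated convergence gives $\int_{\R^d}|m-T_{\beta_k}m|^2\intd\mu\to 0$, so it suffices to bound $\E{\int_{\R^d}|\hat{m}_k-T_{\beta_k}m|^2\intd\mu}$. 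Because $\bigcup_k\cF_k$ is dense in $L^2(\mu)$ and, in the Haar case with $w_k\to\infty$, the cubes $D_k=2^{-j_0}[-w_k,w_k)^d$ exhaust $\R^d$, one can fix a \emph{deterministic} sequence $\bar{m}_k\in\cF_k$ with $\int_{\R^d}|m-\bar{m}_k|^2\intd\mu\to 0$; as $\bar{m}_k$ is piecewise constant on finitely many dyadic cubes one may moreover arrange $\norm{\bar{m}_k}_\infty\le\beta_k$ for large $k$, and $pen_k(\bar{m}_k)\le K^*(k)\,\lambda_k^2\to 0$ holds by assumption.

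\medskip
\noindent\textbf{Empirical versus $L^2(\mu)$ error.} The pivotal remark is that, whatever the sample-dependent orthonormal system $\{f_j\}$ turns out to be, every $m_k=\sum_{j\in J^*}a_j^*\,f_j$ lies in $\cF_k$ and hence in the \emph{deterministic} space $\cF_c\circ\prod_{u_{max}}$ of functions constant on the finest dyadic partition of $D_k$, whose number of cells is $K^*(k)$. Consequently $\hat{m}_k=T_{\beta_k}m_k$ ranges over a fixed class whose $L^1(\mu_n)$-covering numbers at scale $\varepsilon$ are of order $(\beta_k/\varepsilon)^{cK^*(k)}$. Feeding this entropy estimate, together with the mixing Bernstein inequalities of \ref{Appendix_ExpInequalities} — obtained by partitioning $I_n$ into cubic blocks, which costs, relative to the i.i.d.\ sample size $\prod_{i=1}^N n_i(k)$, the reduced effective size $\bigl(\prod_{i=1}^N n_i(k)\bigr)^{1/(N+1)}$ and the factors $\prod_{i=1}^N\log n_i(k)$ — into a standard peeling argument yields, with probability tending to one,
\begin{align*}
\int_{\R^d}|\hat{m}_k-T_{\beta_k}m|^2\intd\mu\;\le\;2\,\scalar{\hat{m}_k-T_{\beta_k}m}{\hat{m}_k-T_{\beta_k}m}_n+\mathrm{err}_k,\qquad \mathrm{err}_k\le c\,\frac{\beta_k^2\,K^*(k)\,\log\beta_k\,\prod_{i=1}^N\log n_i(k)}{\bigl(\prod_{i=1}^N n_i(k)\bigr)^{1/(N+1)}}.
\end{align*}

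\medskip
\noindent\textbf{The $L^0$-penalized least-squares inequality and weak consistency.} Since $T_{\beta_k}$ is $1$-Lipschitz, $|\hat{m}_k-T_{\beta_k}m|\le|m_k-m|$ pointwise, hence $\scalar{\hat{m}_k-T_{\beta_k}m}{\hat{m}_k-T_{\beta_k}m}_n\le|I_n|^{-1}\sum_{s\in I_n}|m_k(X(s))-m(X(s))|^2$. Substituting $Y(s)=m(X(s))+\varsigma(X(s))\epsilon(s)$ into the minimizing relation $|I_n|^{-1}\sum|m_k(X_s)-Y_s|^2+pen_k(m_k)\le|I_n|^{-1}\sum|\bar{m}_k(X_s)-Y_s|^2+pen_k(\bar{m}_k)$ bounds the right-hand side by $|I_n|^{-1}\sum|\bar{m}_k(X_s)-m(X_s)|^2$ plus $pen_k(\bar{m}_k)$ plus the cross term $2|I_n|^{-1}\sum\varsigma(X_s)\epsilon(s)(m_k-\bar{m}_k)(X_s)$; the first average has expectation $\int|m-\bar{m}_k|^2\intd\mu$, and the cross term is controlled uniformly over the deterministic class $\cF_c\circ\prod_{u_{max}}$ by truncating $\epsilon(s)$ at a level of order $(\log|I_n|)^{1/\tau}$ (the discarded mass being handled by the tail bound \eqref{tailCondition}, the remainder by the mixing Bernstein inequality), possibly at the price of a further term $\E{|Y(0)-T_{\beta_k}Y(0)|^2}\to 0$. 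Assembling these, $\E{\int_{\R^d}|\hat{m}_k-m|^2\intd\mu}$ is bounded by a constant times $\int|m-T_{\beta_k}m|^2\intd\mu+\int|m-\bar{m}_k|^2\intd\mu+pen_k(\bar{m}_k)+\E{|Y(0)-T_{\beta_k}Y(0)|^2}+\mathrm{err}_k$, and every summand vanishes under $K^*(k)\lambda_k^2\to 0$ and the second displayed hypothesis; this is weak universal consistency.

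\medskip
\noindent\textbf{Strong consistency and the main obstacle.} For the almost-sure statement I would not pass to expectations but, for each fixed $\varepsilon>0$, bound $\p\bigl(\int|\hat{m}_k-m|^2\intd\mu>\varepsilon\bigr)$ by rerunning the two steps above with the exponential-tail forms of the inequalities of \ref{Appendix_ExpInequalities}; stationarity of $\{Y(s)\}$ supplies the a.s.\ ergodic-type control of the truncated-at-level-$\beta_k$ empirical averages such as $|I_n|^{-1}\sum_{s\in I_n}Y(s)^2$ that stands in for the use of expectations, and the additional hypothesis $\beta_k^2(\log k)^{1+\delta}\prod_{i=1}^N\log n_i(k)/\bigl(\prod_{i=1}^N n_i(k)\bigr)^{1/(N+1)}\to 0$ is precisely what makes the corresponding tail probabilities summable in $k$, so that Borel--Cantelli gives $\int|\hat{m}_k-m|^2\intd\mu\to 0$ a.s. The only genuinely non-routine point is the empirical-versus-true step: because $\{f_j\}$ is itself sample-dependent one cannot index an empirical process by it directly, and the remedy — $\cF_k\subseteq\cF_c\circ\prod_{u_{max}}$, a deterministic piecewise-constant space of dimension $K^*(k)$, so that the relevant metric entropy depends only on $K^*(k)$ and $\beta_k$ — is exactly why the nested partitions $\prod_1,\dots,\prod_{u_{max}}$ were introduced above. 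Marrying this with a Bernstein inequality valid for strong spatial mixing fields (the blocking argument, source of the sub-optimal exponent $1/(N+1)$ and of the logarithmic losses) and with the truncation of the merely $\tau$-sub-exponential errors is where the substance of the proof lies; the remainder is routine sieve bookkeeping.
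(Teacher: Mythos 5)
Your argument has a genuine gap at its central step: you control the cross term $2|I_n|^{-1}\sum_{s}\varsigma(X_s)\epsilon(s)\,(m_k-\bar m_k)(X_s)$ by truncating $\epsilon(s)$ at level $(\log|I_n|)^{1/\tau}$ and invoking the tail bound \eqref{tailCondition}. But \eqref{tailCondition} is not a hypothesis of Theorem~\ref{ConsisNonlinReg}; the consistency theorem only assumes $\epsilon(s)\sim(0,1)$, independent of $X$, with $m,\varsigma\in L^2(\mu)$ — the tail condition enters only in Theorems~\ref{NonlinRateOfConvExpDecr} and \ref{RateConvNonlinThrm}. With mere second moments your step fails: after truncating $\epsilon$ there is no exponential bound left for the discarded part, and the class $\{f-\bar m_k:\ f\in\cF_k\}$ is not uniformly bounded, so the entropy estimate of order $(\beta_k/\varepsilon)^{cK^*}$ you quote (valid for the truncated class $T_{\beta_k}\cF_k$) does not apply to it. The paper deliberately avoids any empirical process indexed by the noise in the consistency proof: it writes the $L^2$-error as an excess prediction risk for an independent copy, splits off the approximation part (handled by denseness, whence $w_k\to\infty$ in the unbounded Haar case), and bounds the estimation part as in \cite{kohler2003nonlinear} by (i) the penalty $\max_{\widehat J}pen_k(\widehat J)=\cO(K^*\lambda_k^2)$, (ii) truncation terms $\E{(Y(e_N)-Y_L(e_N))^2}$ and $|I_n|^{-1}\sum_s(Y(s)-Y_L(s))^2$ at a \emph{fixed} level $L$, and (iii) a supremum over the bounded deterministic class $T_{\beta_k}\cF_k$ of the deviation between $\|f-Y_L\|_n^2$ and its expectation, to which Theorem~\ref{USLLNM} with the VC bound $\cV\le K^*+1$ applies and produces exactly the displayed rate condition. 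A cross-term/peeling argument of the kind you sketch appears in the paper only in the proof of Theorem~\ref{NonlinRateOfConvExpDecr}, where boundedness of $m$ and the tail condition are assumed, and even there the relevant class is truncated at a scale-dependent level ($\cG_{k,l}(\delta)$).

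Two secondary points. First, the obstacle you single out as the crux — the sample-dependence of the orthonormal basis $\{f_j\}$ — is not really one: $\cF_k$ is a deterministic linear span and the empirical Gram--Schmidt step only changes the basis, not the class; the paper's consistency proof works directly with $T_{\beta_k}\cF_k$, and $\cF_c\circ\prod_{u_{max}}$ (Lemma~\ref{RepresentationOfConstantFunctions}) is needed only to refine the penalty in the rate theorem. Second, for the almost-sure statement your plan requires a.s.\ control of $k$-dependent, unbounded empirical averages such as $|I_n|^{-1}\sum_s|\bar m_k(X_s)-m(X_s)|^2$, for which neither Borel--Cantelli (no rate on $\int|m-\bar m_k|^2\,\intd{\mu}$ is available for general $m\in L^2(\mu)$) nor a plain ergodic theorem (the function changes with $k$) applies as stated. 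The paper's $Y_L$-decomposition resolves this: the only term not covered by the uniform concentration inequality is the fixed stationary field $(Y(s)-Y_L(s))^2$, to which the ergodic theorem of \ref{AppendixB} applies — this is precisely where stationarity of $\{Y(s)\}$ is used. Repairing your proof within the theorem's hypotheses essentially forces you back to this bounded-response comparison, i.e.\ to the paper's argument.
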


Under the condition that the error terms are exponentially decreasing as in \eqref{tailCondition}, we can further derive a rate of convergence theorem both for the general linear space and for the wavelet system.

\begin{theorem}[Rate of convergence]\label{NonlinRateOfConvExpDecr}
Let Condition \ref{regCond0} be satisfied for the random field $(X,Y)$. Furthermore, let the regression function $m$ and the conditional variance function $\varsigma^2$ be essentially bounded. Set $B \coloneqq \norm{m}_{\infty}$ and $\beta_k :\equiv B$ for all $k\in\N_+$. Assume the error terms $\epsilon(s)$ fulfill the tail condition~\eqref{tailCondition} for some parameters $\kappa_0,\kappa_1$ and $\tau$. Assume additionally the thresholding sequence $\lambda_k$ and the growth of the basis functions satisfy the relations
\[
		K^*(k) \, \lambda_k^2 \rightarrow 0 \text{ and } \left(	K^*(k) \right)^{1+2/\tau} \left(	\prod_{i=1}^N \log n_i(k)	\right)^{3+4/\tau} \Bigg/\,  |I_{n(k)}|^{1/(N+1)} \rightarrow 0 \text{ as } k \rightarrow \infty.
\]
Consider a sequence of function spaces satisfying \eqref{LinSpaceGeneral}. Then there is a constant $C\in\R_+$ which only depends on $B$, $\norm{\varsigma}_{\infty}$, the lattice dimension $N$, the bound on the mixing coefficients and the parameters of the tail distribution of the error terms such that the $L^2$-error satisfies for all $k\in\N_+$
\begin{align}\begin{split}\label{EqConvRate}
	\E{ \int_{\R^d} |\hat{m}_k - m |^2\, \intd{\mu} } 	&\le 4 \min_{1\le u \le K^*(k) } \left\{ \lambda_k^2 u + \inf_{ \substack{ f\in\cF_k:\\ f = \sum_{i=1}^u a_i g_i }} \int_{\R^d} | f-m| ^2\,\intd{\mu} \right\}	\\
	&\quad + C\, \left(	K^*(k) \right)^{1+2/\tau} \left(	\prod_{i=1}^N \log n_i(k)	\right)^{3+4/\tau} \Bigg/\, |I_{n(k)}|^{1/(N+1)} .
\end{split}\end{align}
In the particular case the sequence of function spaces satisfy \eqref{HaarWaveletsLinSpace} and \eqref{HaarWaveletsLinSpace2}, the bound from \eqref{EqConvRate} can be refined as
\begin{align}
		\E{ \int_{\R^d} |\hat{m}_k - m |^2\, \intd{\mu} }  &\le 4 \min_{1\le u \le u_{max} }  \left\{	\lambda_k^2 \left( (2^d-1)(u-1) + (2w_k)^d \right) 	+ \min_{\pi \in \prod_u} \inf_{f \in \cF_c \circ \pi } \int_{\R^d} |f-m|^2\,\intd{\mu} \right\} \nonumber \\
		&\quad + C\, \left(	 2\cdot 2^{j_1-j_0} w_k)\right)^{d(1+2/\tau)} \left(	\prod_{i=1}^N \log n_i(k)	\right)^{3+4/\tau} \Bigg/\, |I_{n(k)}|^{1/(N+1)}. \nonumber
\end{align}
\end{theorem}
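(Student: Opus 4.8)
The plan is to carry out the bias--variance analysis for $L^0$-penalized least squares over sieves, in the spirit of \cite{kohler2003nonlinear,kohler2008nonlinear}, with the i.i.d.\ concentration arguments replaced by the exponential inequalities for $\alpha$-mixing random fields from \ref{Appendix_ExpInequalities} and with an extra truncation of the error terms to accommodate the general tail bound \eqref{tailCondition}.

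\emph{Step 1 (reduction).} Since $\norm{m}_\infty\le B=\beta_k$, truncation only decreases errors, so $\int|\hat m_k-m|^2\intd{\mu}\le\int|T_{\beta_k}m_k-m|^2\intd{\mu}$ and $\norm{\hat m_k-m}_n\le\norm{m_k-m}_n$. The estimator $m_k$ minimizes the penalized empirical risk over all $\widehat J\subseteq J$, so it beats $m_{k,\{1,\dots,u\}}$; the latter is the $L^2(\mu_n)$-projection of $Y$ onto $\langle f_1,\dots,f_u\rangle=\langle g_1,\dots,g_u\rangle$ and hence has smaller empirical risk than any $f=\sum_{i=1}^u a_ig_i\in\cF_k$, while $pen_k(f)\le\lambda_k^2u$. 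Expanding squares in $Y(s)=m(X(s))+\varsigma(X(s))\epsilon(s)$ bounds $\norm{\hat m_k-m}_n^2$ by the penalized approximation error $\lambda_k^2u+\norm{f-m}_n^2$ plus a weighted empirical-error term $\frac{2}{|I_n|}\sum_{s\in I_n}(\hat m_k-f)(X(s))\varsigma(X(s))\epsilon(s)$ (the replacement of $m_k$ by $\hat m_k$ here being by elementary manipulations exploiting $|m|\le\beta_k$); converting empirical norms back to $L^2(\mu)$-norms through the elementary inequalities $\int|g-m|^2\intd{\mu}\le2\norm{g-m}_n^2+(\text{dev})$ and $\norm{g-m}_n^2\le2\int|g-m|^2\intd{\mu}+(\text{dev})$, applied to $g\in\{\hat m_k,f\}$, produces the factor $4$ in front of the first summand of \eqref{EqConvRate} after taking the minimum over $u$ and the infimum over $f$.

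\emph{Step 2 (variance term, the crux).} Let $\cG_k\coloneqq\{T_{\beta_k}(\sum_{j=1}^{\tilde K}c_jf_j):c_j\in\R\}$; every admissible $\hat m_k$, and the truncation of every competitor, lies in $\cG_k$, whose members are bounded by $B$ and lie in a linear space of dimension $\tilde K\le K^*(k)$, so $\log\cN(\delta,\cG_k,\norm{\cdot}_n)\lesssim K^*(k)\log(B\,K^*(k)/\delta)$ uniformly in the data. The "$(\text{dev})$" quantities and the weighted empirical-error term from Step~1 are bounded uniformly over $\cG_k$ by: (i) truncating the $\epsilon(s)$ at a level $L\asymp(\log|I_{n(k)}|)^{1/\tau}$ and controlling the discarded mass through \eqref{tailCondition}; (ii) applying the Bernstein-type exponential inequality for $\alpha$-mixing random fields from \ref{Appendix_ExpInequalities} to the now bounded sums $\frac{1}{|I_n|}\sum_s(g-m)(X(s))\varsigma(X(s))\epsilon^{(L)}(s)$ and $\frac{1}{|I_n|}\sum_s\big((g-m)^2(X(s))-\E{(g-m)^2}\big)$ for fixed $g\in\cG_k$ --- the spatial blocking underlying that inequality is exactly what degrades the effective sample size from $|I_n|$ to $|I_{n(k)}|^{1/(N+1)}$ and contributes the product $\prod_{i=1}^N\log n_i(k)$; and (iii) passing from fixed $g$ to $\sup_{g\in\cG_k}$ by a union bound over a $\delta$-net and integrating the resulting tail. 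Optimizing $L$ against the covering entropy yields the stated exponents --- one power of $K^*(k)$ and of each $\log n_i(k)$ from the entropy and the blocking, the additional $2/\tau$ on $K^*(k)$ and the remaining powers in $(\prod_i\log n_i(k))^{3+4/\tau}$ from the interplay of the tail truncation with the mixing inequality --- with a constant $C$ depending only on $B$, $\norm{\varsigma}_\infty$, $N$, $c_0,c_1$, and $\kappa_0,\kappa_1,\tau$. I expect this coupling --- the spatial blocking against the heavy-tailed truncation against the covering bound for the random, data-adapted basis --- to be the main obstacle, since all three effects must be tracked simultaneously for the exponents $1+2/\tau$, $3+4/\tau$ and the rate $|I_{n(k)}|^{1/(N+1)}$ to come out correctly.

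\emph{Step 3 (Haar-wavelet refinement).} Using the first $u$ of the orthonormalized wavelets in the prescribed order means starting from the $(2w_k)^d$ father wavelets at resolution $j_0$ and performing $u-1$ refinement steps, each adding at most $2^d-1$ mother wavelets; hence $pen_k(f)\le\lambda_k^2\big((2^d-1)(u-1)+(2w_k)^d\big)$ for $f$ constant on some $\pi\in\prod_u$, and since every $f\in\cF_c\circ\pi$ with $\pi\in\prod_u$ is reachable this way, the infimum over $\{f=\sum_{i=1}^u a_ig_i\}$ may be replaced by $\min_{\pi\in\prod_u}\inf_{f\in\cF_c\circ\pi}\int|f-m|^2\intd{\mu}$ and the index $u$ need only run up to $u_{max}$. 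Substituting $K^*(k)=(2\cdot2^{j_1-j_0}w_k)^d$ in the variance term of Step~2 then gives the refined bound.
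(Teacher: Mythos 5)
Your overall blueprint is the right family of argument (oracle-type decomposition for $L^0$-penalized least squares, covering/VC bounds, the mixing Bernstein inequalities of \ref{Appendix_ExpInequalities}, truncation to handle the tails), and one of your deviations from the paper is harmless: the paper compares the true $L^2(\mu)$-norm with the empirical norm of an i.i.d.\ ghost sample (via Lemma~\ref{modGyorfiT11.4}) and then bridges back to the dependent empirical norm with Lemma~\ref{ConvDistDepAndInd}, whereas you propose to work directly with the dependent empirical norm via a uniform mixing concentration bound; both routes deliver terms of order $K^*\,\mathrm{polylog}/R(n(k))$ with $R(n)=|I_n|^{1/(N+1)}/\prod_{i=1}^N\log n_i$, which are negligible for the stated rate. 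The problem is that the step you yourself flag as the crux is not actually carried out, and the device you propose there does not apply to the quantity it has to control.

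Concretely, the basic inequality produces the noise cross term $\frac{2}{|I_n|}\sum_{s}\varsigma(X(s))\epsilon(s)\,(m_k-f)(X(s))$ with the \emph{untruncated, data-dependent} minimizer $m_k$ (you cannot replace $m_k$ by $\hat m_k=T_{\beta_k}m_k$ inside a linear functional of the signed noise, and the deterministic competitor $f=\sum_{i\le u}a_ig_i$ need not be bounded by $B$ either), so a union bound over a net of your bounded class $\cG_k=\{T_{\beta_k}\sum_j c_jf_j\}$ simply does not bound it. The paper resolves exactly this point by comparing with the empirical penalized projection $m_k^*$ of $m$, invoking the inclusion from \cite{vandeGeer2001} and peeling over shells $V^2(m_k|m_k^*)\in[2^{2l}t,2^{2(l+1)}t)$, where the shell constraint itself justifies truncating the classes $\cG_{k,l}(t)$ at level $\sqrt{|I_{n(k)}|}\,2^{l+1}\sqrt{t}$. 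Moreover, the heavy tails are not removed by one fixed cut at $L\asymp(\log|I_{n(k)}|)^{1/\tau}$ (which would in any case alter the estimator, since $a_j^*$ is computed from the untruncated $Y$, and needs a separate coincidence argument); instead Lemma~\ref{BoundMixingNoise} together with Theorem~\ref{extBernstein} optimizes the truncation level $D$ separately for each deviation level $t$ and controls the event $\{|I_n|^{-1}\sum_s\epsilon(s)^2>K^2\}$ with tail exponent $\tau/2$. The resulting stretched-exponential tails $\exp\{-c\,(tR(n(k)))^{\tau/(1+\tau)}\}$ (and the $\tau/2$ analogue), balanced against the entropy factor $(C|I_{n(k)}|)^{2(K^*+2)}$, force the choice $v_k=(K^*(\prod_{i=1}^N\log n_i(k))^2)^{(1+\tau)/\tau}/R(n(k))$ resp.\ the $(2+\tau)/\tau$-power, and this is precisely where the exponents $1+2/\tau$ and $3+4/\tau$ in \eqref{EqConvRate} come from; your sketch asserts these exponents will ``come out correctly'' but supplies no mechanism that produces them. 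Your Step 3 for the Haar refinement is essentially the paper's Lemma~\ref{RepresentationOfConstantFunctions} and is fine, but as it stands the proposal has a genuine gap at the main estimation-error step.
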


Note that we consider in \eqref{EqConvRate} the infimum over the linear combinations with the deterministic functions $g_i$, this means that the approximation error, i.e., the first term in \eqref{EqConvRate}, is deterministic. The same is true for the special case of the Haar wavelet system.\\
Under the more severe restriction of a bounded regression function $m$ and bounded error terms, the rate of convergence of $\hat{m}_k$ can be improved. We only state the result for the general function spaces from \eqref{LinSpaceGeneral}. The application to the orthonormal wavelet system from \eqref{HaarWaveletsLinSpace} and \eqref{HaarWaveletsLinSpace2} is straightforward.

\begin{theorem}\label{RateConvNonlinThrm}
Let the conditions of Theorem \ref{NonlinRateOfConvExpDecr} be satisfied. Additionally, let the error terms $\epsilon(s)$ be essentially bounded by $B'\in\R_+$, i.e., $|\epsilon(s)|\le B'$ for all $s\in\Z^N$. Set $\beta_k =  B+\norm{\varsigma}_{\infty} B'$. Then there is a constant $C\in\R_+$ which only depends on $B$, $B'$, $\norm{\varsigma}_{\infty}$, the lattice dimension $N$ and the bound on the mixing coefficients such that the estimator $\hat{m}_k$ satisfies for all $k\in\N_+$
\begin{align}\begin{split}\label{EqConvRateV2}
		\E{ \int_{\R^d} | \hat{m}_k - m|^2\, \intd{\mu} }	&\le 4 \min_{1\le u \le K^*(k) }  \left\{ \lambda_k^2 u + \inf_{ \substack{ f\in\cF_k:\\ f = \sum_{i=1}^u a_i g_i }} \int_{\R^d} | f-m| ^2\,\intd{\mu} \right\}	\\
		&\quad + C\, K^*(k) \, \left(	\prod_{i=1}^N \log n_i(k)	\right)^3 \,\Big/\, |I_{n(k)}|^{1/(N+1)} .
\end{split}\end{align}
\end{theorem}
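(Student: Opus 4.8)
\emph{Proof idea.} The argument mirrors that of Theorem~\ref{NonlinRateOfConvExpDecr}; the extra boundedness hypothesis only serves to make the noise truncation redundant, which is exactly what removes the exponents $2/\tau$ and $4/\tau$. Write $N(s)\coloneqq\varsigma(X(s))\,\epsilon(s)$ and note that with $\beta_k=B+\norm{\varsigma}_\infty B'$ we have $|m(X(s))|\le B$ and $|N(s)|\le\norm{\varsigma}_\infty B'$ a.s., hence $|Y(s)|\le\beta_k$ a.s. Consequently $T_{\beta_k}$ moves $m_k$ no further from $Y(s)$, i.e.\ $|\hat m_k(x)-Y(s)|\le|m_k(x)-Y(s)|$ pointwise, and combining this with the $L^0$-penalized least-squares characterisation of $m_k$ and the $\mu_n$-orthonormality of $\{f_1,\dots,f_{\widetilde K}\}$ yields, for every \emph{deterministic} $f=\sum_{i=1}^u a_i g_i\in\cF_k$,
\[
  \norm{\hat m_k-m}_n^2\;\le\;\norm{f-m}_n^2+\lambda_k^2 u+\frac{2}{|I_{n(k)}|}\sum_{s\in I_{n(k)}}\bigl(\hat m_k(X(s))-f(X(s))\bigr)N(s),
\]
$\norm{\cdot}_n$ denoting the empirical norm attached to $\mu_n$.

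\emph{Controlling the stochastic term.} The function $\hat m_k-f$ lies in $\cG_k\coloneqq\{T_{\beta_k}g-f:g\in\cF_k\}$, a truncation of a linear space of (random) dimension $\widetilde K\le K^*(k)$, so its $\norm{\cdot}_n$-covering numbers are polynomial of degree $\bigO{\widetilde K}$. Because $|N(s)|\le\norm{\varsigma}_\infty B'$ is bounded, the centered sums $|I_{n(k)}|^{-1}\sum_s h(X(s))N(s)$, $h\in\cG_k$, can be fed directly into the Bernstein-type exponential inequality for strong spatial mixing random fields from~\ref{Appendix_ExpInequalities} with no preliminary truncation of the errors — precisely the step that in Theorem~\ref{NonlinRateOfConvExpDecr} brought~\eqref{tailCondition} into play and produced the extra factors $(K^*)^{2/\tau}$ and $(\prod\log n_i)^{4/\tau}$. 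The spatial blocking underlying that inequality is what replaces the i.i.d.\ rate $|I_{n(k)}|^{-1}$ by $|I_{n(k)}|^{-1/(N+1)}$ and inserts the logarithmic corrections $\prod_{i=1}^N\log n_i(k)$. Combining the covering bound with a peeling argument over $\cG_k$ and using $2ab\le\eta a^2+\eta^{-1}b^2$ to absorb a fraction of $\norm{\hat m_k-f}_n^2$ — hence, via $\norm{\hat m_k-f}_n^2\le2\norm{\hat m_k-m}_n^2+2\norm{f-m}_n^2$, a fraction of $\norm{\hat m_k-m}_n^2$ — into the left-hand side gives, off an event of probability $\bigO{|I_{n(k)}|^{-2}}$,
\[
  \norm{\hat m_k-m}_n^2\;\le\;3\norm{f-m}_n^2+2\lambda_k^2 u+C_1\,K^*(k)\Bigl(\textstyle\prod_{i=1}^N\log n_i(k)\Bigr)^{3}\Big/\,|I_{n(k)}|^{1/(N+1)}.
\]

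\emph{From $\norm{\cdot}_n$ to $\norm{\cdot}$ and conclusion.} A second application of the same covering-number/Bernstein machinery, now uniform over the bounded class $T_{\beta_k}\cF_k-\{m\}$, yields a multiplicative law-of-large-numbers bound $\norm{\hat m_k-m}^2\le(1+\eta)\norm{\hat m_k-m}_n^2+C_2\,K^*(k)(\prod_{i=1}^N\log n_i(k))^{3}/|I_{n(k)}|^{1/(N+1)}$ off a further event of negligible probability — again the noise does not enter, so the exponent $3$ is what survives. For the deterministic $f$ one has $\E{\norm{f-m}_n^2}=\int_{\R^d}|f-m|^2\intd\mu$ exactly, by stationarity of the marginal law; on the exceptional events every quantity is deterministically $\bigO{\beta_k^2}=\bigO{1}$, so they contribute $\bigO{|I_{n(k)}|^{-2}}$ to the expectation. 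Taking expectations, choosing $\eta$ small enough that the surviving constants are at most $4$, and finally minimizing over $u\in\{1,\dots,K^*(k)\}$ and over $f=\sum_{i=1}^u a_i g_i$ gives~\eqref{EqConvRateV2} with a constant $C$ depending only on $B$, $B'$, $\norm{\varsigma}_\infty$, $N$ and the mixing constants $c_0,c_1$.

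\emph{Main obstacle.} The delicate point is the uniform control of the stochastic term over the \emph{data-dependent} index set $J^*$, i.e.\ over the whole truncated sieve $T_{\beta_k}\cF_k$: one must pair a covering estimate for this random-dimensional class with the mixing Bernstein inequality while keeping precise track of how the spatial blocking degrades $|I_{n(k)}|^{-1}$ to $|I_{n(k)}|^{-1/(N+1)}$ and generates the $\prod_{i=1}^N\log n_i(k)$ factors. Everything else is a simplification of the corresponding steps in the proof of Theorem~\ref{NonlinRateOfConvExpDecr}, the boundedness of $\epsilon(s)$ merely letting us dispense with the error truncation and with the tail estimate~\eqref{tailCondition} altogether.
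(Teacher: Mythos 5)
Your proposal is correct in substance, and it isolates exactly the point the paper exploits: once $|\epsilon(s)|\le B'$, the noise term can be handled by the bounded-class uniform concentration inequality (Theorem~\ref{USLLNM}) instead of the tail-truncation device of Lemma~\ref{BoundMixingNoise}, and this is precisely what removes the exponents $2/\tau$ and $4/\tau$ and leaves the factor $K^*(k)\left(\prod_{i=1}^N\log n_i(k)\right)^3/|I_{n(k)}|^{1/(N+1)}$. The route you take around this core is, however, structurally different from the paper's. The paper simply re-runs the proof of Theorem~\ref{NonlinRateOfConvExpDecr}: it keeps the three-term decomposition $T_{1,k}+T_{2,k}+T_{3,k}$, in which the comparison between the $L^2(\mu)$-norm and the empirical norm is routed through an i.i.d.\ ghost sample ($T_{1,k}$ via the independent-sample inequality of Lemma~\ref{modGyorfiT11.4}, $T_{3,k}$ via Lemma~\ref{ConvDistDepAndInd}), it keeps the comparison function $m_k^*$ and the van de Geer inclusion for $T_{2,k}$, and it only swaps Lemma~\ref{BoundMixingNoise} for Theorem~\ref{USLLNM} at the step~\eqref{NonlinUnd4}, then integrates the tail bound in $t$ rather than working on a single exceptional event. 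You instead rebuild the skeleton from the classical basic inequality for the $L^0$-penalized least-squares estimator, absorb a fraction of $\norm{\hat m_k-f}_n^2$ via $2ab\le\eta a^2+\eta^{-1}b^2$, and compare the empirical norm directly with the $L^2(\mu)$-norm under mixing, dispensing with the ghost sample altogether. Both routes use the same appendix machinery (VC/covering bounds, peeling, the mixing Bernstein inequality) and give the same rate; yours is more self-contained but re-derives what Theorem~\ref{NonlinRateOfConvExpDecr} already supplies, and it requires two small points of care that you only use implicitly: first, the basic inequality needs $pen_k(f)\le u\,\lambda_k^2$ for a deterministic $f=\sum_{i=1}^u a_i g_i$, which rests on the nested-span property $\langle f_1,\ldots,f_u\rangle=\langle g_1,\ldots,g_u\rangle$ in $L^2(\mu_n)$ of the empirically orthonormalized basis; second, the constants must be tracked so that $(1+\eta)\cdot 3\le 4$ on the approximation term and $(1+\eta)\cdot 2\le 4$ on the penalty, which your choice of small $\eta$ indeed accommodates.
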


We see that the estimation error which is the second term on the r.h.s. of \eqref{EqConvRate} resp. \eqref{EqConvRateV2} decreases at a rate of $\left(	K^*(k) \right)^{1+2/\tau} \left(	\prod_{i=1}^N \log n_i(k)	\right)^{3+4/\tau} \big/\, |I_{n(k)}|^{1/(N+1)}$ where $\tau$ converges to infinity in the case of bounded error terms, note that $K^*(k)$ depends on the data dimension $d$. Hence, an increase in the tail parameter $\tau$ influences the rate positively because the conditional variance is reduced. In particular, for sub Gaussian error terms, this means that the estimation error decreases at a rate of at least the squared number of basis functions divided by the sample size raised to $1/(N+1)$ modulo some logarithmic terms which cannot be avoided. Below, we shall give a more detailed discussion on the influence of the lattice dimension $N$, in particular, in the light of the findings which have been made in other research articles.\\
The approximation error depends on the smoothness of the regression function. If we impose smoothness assumptions on $m$, we can derive a rate of convergence. We give two examples of application for the isotropic Haar basis: we choose an $(L,r)$-Hölder continuous regression function $m$ which means there is an $L\in\R_+$ and $r \in (0,1 ]$ such that
\[
	|m(x) - m(y)| \le L \norm{ x-y }^r_{\infty} \text{ for all } x,y \text{ in the domain of }m.
	\]
\begin{corollary}[Rate of convergence for Hölder continuous functions]\label{RateOfConvergenceExpErrorsHaarBasis}
Let the conditions of Theorem \ref{NonlinRateOfConvExpDecr} be satisfied for the function spaces which are constructed from the orthonormal wavelet system as given in \eqref{HaarWaveletsLinSpace} and \eqref{HaarWaveletsLinSpace2}. Let $m$ be $(L,r)$-Hölder continuous. Furthermore, assume that the Euclidean norm of $X$ is integrable w.r.t.\ the probability measure $\p$ for some $\gamma \in \R_+$, i.e., $\norm{X}_{2}^{\gamma}$ .  Let $C_0, C_1, C_2 \in\R_+$ be constants. Set
\begin{align*}
	& \widetilde{R}(n) =		\left(\prod_{i=1}^N \log n_i \right)^{ 3 + 4/\tau} \Big/ |I_n|^{1/(N+1)}
	\end{align*}
and define the parameters as
\begin{align*}
	& w_{k} = \floor*{ C_0 \exp\left\{ - \frac{ 2r}{2rd(1+2/\tau) + \gamma (2r +d(1+2/\tau)) } \log \tilde{R}(n(k)) \right\} } ,\\
	&\lambda_k^2 = C_1\, \exp\left\{ \frac{ 2rd + (2r + d) \gamma }{ 2rd(1+2/\tau) + \gamma(2r + d(1+2/\tau)) } \log  \widetilde{R}(n(k))  \right\} \\
	&\text{ and } j_1(k) = \floor*{ C_2 - \frac{\gamma}{2r d(1+2/\tau)  + \gamma(2r + d(1+2/\tau) ) } \frac{ \log \widetilde{R}(n(k)) }{ \log 2 } }  .
	\end{align*}
Then the rate of convergence is at least
\begin{align*}
		\E{ \int_{\R^d} | \hat{m}_n - m|^2\, \intd{\mu} } & = \cO\left( \widetilde{R}(n(k))^{ 2r\gamma \,/\, [2rd(1+2/\tau) + \gamma(2r + d(1+2/\tau)) ] }	\right).
\end{align*}
If the distribution of $X$ is bounded and if
\begin{align*}
		&\lambda_k^2 = C_1\, \exp\left\{	 \frac{2r+d}{2r+d(1+2/\tau)} \log  \widetilde{R}(n(k))  \right\} \\
		&\text{ and } j_1(k) = \floor*{C_2 - \frac{1}{2r + d(1+2/\tau) } \frac{ \log \widetilde{R}(n(k)) }{ \log 2 } },
\end{align*}
the estimator achieves a rate of at least $\cO\left( \widetilde{R}(n(k))^{ 2r \,/\,[2r + d(1+2/\tau) ]   } \right)$.%$\cO\left( L^{2d[1+2/\tau] \,/\, [2r+d(1+2/\tau)] }\; \widetilde{R}(n(k))^{ 2r \,/\,[2r + d(1+2/\tau) ]   } \right)$.
\end{corollary}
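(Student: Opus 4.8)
The plan is to balance the two terms on the right-hand side of the refined bound in Theorem~\ref{NonlinRateOfConvExpDecr} (the Haar-wavelet version), namely the approximation error against the estimation error
\[
C\,\bigl(2\cdot 2^{j_1-j_0} w_k\bigr)^{d(1+2/\tau)}\,\Bigl(\textstyle\prod_{i=1}^N \log n_i(k)\Bigr)^{3+4/\tau}\Big/\,|I_{n(k)}|^{1/(N+1)},
\]
by choosing the free parameters $w_k$, $\lambda_k$ and $j_1(k)$ as functions of $\widetilde R(n(k))$. First I would control the approximation error for an $(L,r)$-Hölder continuous $m$: on the part of $\R^d$ covered by the finest-resolution cells, every cell of the partition $\prod_{u_{max}}$ has edge length $2^{-j_1}$, so approximating $m$ by its $\mu$-average on each such cell (which lies in $\cF_c\circ\pi$) produces a local error of order $L^2\,2^{-2r j_1}$; integrating against $\mu$ gives a contribution $\lesssim L^2\,2^{-2rj_1}$. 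The remaining piece is the tail $\R^d\setminus D_k$ where $D_k = 2^{-j_0}[-w_k,w_k)^d$; there the estimator contributes nothing and the error is $\int_{\R^d\setminus D_k}m^2\intd\mu$, which by the integrability assumption $\E{\norm{X}_2^{\gamma}}<\infty$ together with essential boundedness of $m$ (hypotheses of Theorem~\ref{NonlinRateOfConvExpDecr}) and Markov's inequality is $\lesssim (2^{-j_0}w_k)^{-\gamma}$, i.e. of order $w_k^{-\gamma}$ up to constants depending on $j_0$.

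Next I would count the basis functions actually used. Taking $u$ in the outer minimum to correspond to the single partition $\prod_{u_{max}}$ (or a slightly coarser one supported on $D_k$), the penalty term is $\lambda_k^2\bigl((2^d-1)(u-1)+(2w_k)^d\bigr)$, which is of order $\lambda_k^2\,(2^{j_1-j_0}w_k)^d$ since $u_{max}\asymp w_k^d\,2^{d(j_1-j_0)}$ by \eqref{uMax}. Hence, up to constants, the full bound is of the order
\[
w_k^{-\gamma} \;+\; 2^{-2rj_1} \;+\; \lambda_k^2\,\bigl(2^{j_1-j_0}w_k\bigr)^d \;+\; C\,\bigl(2^{j_1-j_0}w_k\bigr)^{d(1+2/\tau)}\,\widetilde R(n(k)).
\]
The optimization is then a three-parameter balancing problem: I would set $\lambda_k^2 \asymp \bigl(2^{j_1-j_0}w_k\bigr)^{2d/\tau}\widetilde R(n(k))$ so the penalty and stochastic terms merge into a single term $\asymp \bigl(2^{j_1-j_0}w_k\bigr)^{d(1+2/\tau)}\widetilde R(n(k))$, and then choose $w_k$ and $j_1$ to equalize this with $w_k^{-\gamma}$ and with $2^{-2rj_1}$. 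Writing $w_k\asymp \widetilde R^{-a}$ and $2^{j_1}\asymp \widetilde R^{-b}$ and matching exponents yields a linear system in $(a,b)$ whose solution gives precisely the exponents appearing in the statement; substituting back, the common rate is $\widetilde R(n(k))^{2r\gamma/[2rd(1+2/\tau)+\gamma(2r+d(1+2/\tau))]}$. In the bounded-design case one drops the tail term $w_k^{-\gamma}$ (take $w_k$ constant, covering the support), so only $2^{-2rj_1}$ must be balanced against $\bigl(2^{j_1-j_0}\bigr)^{d(1+2/\tau)}\widetilde R$, giving $2^{j_1}\asymp\widetilde R^{-1/[2r+d(1+2/\tau)]}$ and the stated rate $\widetilde R(n(k))^{2r/[2r+d(1+2/\tau)]}$.

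I would also verify that the parameter choices are consistent with the hypotheses of Theorem~\ref{NonlinRateOfConvExpDecr}: one checks $K^*(k)\lambda_k^2\to 0$ (which, with $K^*(k)=(2^{j_1-j_0+1}w_k)^d$, reduces to $\bigl(2^{j_1-j_0}w_k\bigr)^{d(1+2/\tau)}\widetilde R(n(k))\to 0$, true since that is the rate we are proving vanishes) and the second growth condition $(K^*(k))^{1+2/\tau}(\prod\log n_i)^{3+4/\tau}/|I_{n(k)}|^{1/(N+1)}\to 0$, which is again exactly the stochastic term going to zero. The floor functions in the definitions of $w_k$ and $j_1(k)$ change the constants but not the exponents, and one must note $j_1\ge j_0$ eventually (automatic since $\widetilde R\to 0$ forces the exponent on $2^{j_1}$ negative, i.e. $j_1\to\infty$), and $w_k\in\N_+$ nondecreasing. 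The main obstacle is bookkeeping rather than conceptual: keeping the three coupled exponents straight through the floor operations and confirming that the merged penalty-plus-stochastic term is genuinely of order $\bigl(2^{j_1-j_0}w_k\bigr)^{d(1+2/\tau)}\widetilde R(n(k))$ after the choice of $\lambda_k$ — in particular that the $(2^d-1)(u-1)$ and $(2w_k)^d$ pieces of the penalty are both dominated by it — and then solving the $2\times 2$ linear system cleanly to recover the advertised exponents.
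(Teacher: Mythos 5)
Your proposal is correct and follows essentially the same route as the paper: bound the approximation error by $L^2 2^{-2rj_1}$ via Hölder continuity on the cells plus a Markov-inequality tail term of order $w_k^{-\gamma}$ (absent in the bounded case), then balance these against the penalty and stochastic terms by the stated choices of $w_k$, $\lambda_k$, $j_1$ — and your merged penalty/stochastic choice $\lambda_k^2\asymp(2^{j_1-j_0}w_k)^{2d/\tau}\widetilde R(n(k))$ indeed reproduces the exponents in the statement. The only (immaterial) difference is that the paper uses point evaluations $m(x^*(\gamma))$ rather than cell averages for the approximant and fixes $j_0=0$.
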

\begin{proof}
The proof is achieved by computing the approximation error, we choose $j_0 = 0$ as the roughest resolution: there is a function $f \in \cF_k$ (from Equations \eqref{HaarWaveletsLinSpace} and \eqref{HaarWaveletsLinSpace2}) which is piecewise constant on dyadic $d$-dimensional cubes of edge length $2^{- j_1 }$ with values 
\begin{align*}
	&f(x) = m\left( x^*(\gamma) \right) \text{ for } x \in \left[ (\gamma_1,\ldots,\gamma_d)/2^{j_1}, ((\gamma_1,\ldots,\gamma_d) + e_N)/2^{j_1} \right) \\
	&\qquad\qquad \text{ and } x^*(\gamma) \in \left[ (\gamma_1,\ldots,\gamma_d)/2^{j_1}, ((\gamma_1,\ldots,\gamma_d) + e_N)/2^{j_1} \right) \cap dom(m)  \\
	&\qquad\qquad \qquad\qquad \text{ where } \gamma_i \in \Z \text{ for }i=1,\ldots,d. 
	\end{align*}
In case of an unbounded distribution of $X$, the domain of $f$ is the cube $[-w_k, w_k)^d$ and in case of a bounded distribution, $f$ is supposed to be defined on the entire domain of $X$. The approximation error is at most
\[
	\int_{B} |f-m|^2\,\intd{\mu} \le \sup_{\text{dom}(f) } |f-m|^2 + \norm{m}_{\infty}^2 \p( \norm{X}_{\infty} \ge w_k ) \le L^2\, 2^{-2\,j_1(k)\,r } + \norm{m}_{\infty}^2 \E{ \norm{X}_{\infty}^{\gamma} } \,\Big/ \, w_k^{\gamma} .
\]
If the distribution is bounded the second term on the RHS in the last expression is zero. The growth rates of $j_1$, $\lambda_k$ and $w_k$ equalize the asymptotic rates of the error terms in both cases.
\end{proof}

Corollary~\ref{RateOfConvergenceExpErrorsHaarBasis} reveals that the lattice dimension $N$ and the data dimension $d$ influence the rate of convergence negatively whereas the parameters $\tau$ and $r$ have a positive influence. The role of the lattice dimension $N$ is discussed below. The effect of the data dimension $d$ is the well-known curse of dimensionality. The positive impact of the parameter $\tau$ is clear because an increase in $\tau$ controls the tail growth of the error terms. Similarly, an increase in $r$ means a smoother regression function which can be better approximated by a piecewise constant function.\\
We discuss in detail the influence of the lattice dimension $N$. Therefore, we first sum up classical results for i.i.d.\ data which are the most comparable to the case where $N=1$: \cite{stone1982optimal} shows that the optimal rate of convergence for $(L,r)$-H{\"o}lder continuous functions is in $\cO\left( n^{-2r/(2r+d) } \right)$. In the classical case for a Hölder continuous regression function which is defined on a bounded domain and for an i.i.d.\ sample of $n$ observations the  $L^2$-risk decreases essentially at a rate of $n^{-2r/(2r+d) }$ times a logarithmic factor: \cite{kohler2008nonlinear} considers a multivariate set-up for an i.i.d.\ sample $\{ X_1,\ldots,X_n \} \subseteq [0,1]^d$, a bounded regression function $m$ and sub Gaussian error terms. This corresponds to our scenario of bounded $X(s)$ and a decay rate for the error terms of at least $\tau \ge 2$. For the hard thresholding sequence $\lambda_n = C \sqrt{ \log n/ n} $ Kohler obtains a rate of $( \log n / n)^{2r/(2r+d)}$ which is nearly optimal. \cite{FrankeSachs} investigate the soft thresholding estimator for adaptive wavelets in the case of one-dimensional data. They require the existence of a compactly supported one-dimensional density for the distribution of $X$ and the existence of all moments of the error terms. For an adaptive soft thresholding estimator and an i.i.d.\ sample, they investigate a similarly defined rate of convergence (w.r.t.\ the empirical distribution) which in $\cO\left(	(\log n / n)^{2r/(2r+1) }	\right)$. \cite{li2016nonparametric} investigates a wavelet estimator for a Besov function $m$ in a very similar regression model for spatial data $(X,Y)$ as we do. However, he additionally assumes that the distribution of the $X(s)$ admits a compactly supported positive density which is known. Given these restrictions, the rate of convergence is again optimal modulo a log-loss.\\
In all three cases this log-loss is the result of the increasing complexity of the sieves. \cite{birge1997model} consider penalized nonparametric density estimation with sieve estimators for i.i.d.\ data. They show that under appropriate penalizing assumptions the log-loss can be avoided for  a special class of Besov functions. \cite{baraud2001adaptive} study penalized nonparametric regression function estimation for a $\beta$-mixing observation sequence $( (X_i,Y_i): 1\le i\le n )$ where the regressors $X_i$ are multidimensional, identically distributed and admit a density w.r.t.\ the Lebesgue measure and $Y_i=m(X_i)+\epsilon_i$. As in the present model \eqref{lsqI} the error terms are assumed to be independent of the data $X_i$.
\cite{baraud2001adaptive} consider a rate of convergence w.r.t.\ the empirical 2-norm defined by the design points $X_1,\ldots,X_n$ and the structure of the approximation error differs somewhat. They achieve for the estimation error a rate which is in $\cO(n^{-1})$ under a certain requirement on the decay of the $\beta$-mixing coefficients. However, note that the assumption of $\beta$-mixing data is stricter than the assumption of $\alpha$-mixing data.\\
In the present case of dependent data on a lattice the rates are different for general $N$ without further restrictions: consider for the rest of the discussion the best case where the approximation error is zero. Firstly, let additionally the error terms and the support of the distribution of the $X(s)$ be both bounded. This implies that the number of basis functions $K^*(k)$ can be chosen as constant and we see from Equations \eqref{EqConvRate} and \eqref{EqConvRateV2} that the best rate which is possible is in $\cO\Big( \left(	\prod_{i=1}^N \log n_i(k)	\right)^3 \,\Big/\, |I_{n(k)}|^{1/(N+1)}  \Big)$ independent of the data dimension $d$. If additionally in this case $N=1$, then for a sample of $n$ data points this means that the rate is at most $(\log n)^3 / n^{1/2}$. This corresponds to the findings made by \cite{modha1996minimum} who investigate the nonparametric regression model for stationary times series under minimal assumptions. In particular, they obtain for a one-dimensional times series $X=(X_k: k\in \Z)$ a rate of convergence of $\sqrt{\log n/ n}$.\\
Secondly, assume that we are still in the best case where the approximation error is zero but that the error terms are unbounded, our rate shows two further correction factors which come from the dependence relations: instead of $d$ we have $d (1+\tau)/\tau$ which is larger and additionally the exponent is multiplied by the standard correction factor which depends on the lattice dimension $N$. Consider the case $N=2$, let the distribution of the $X(s)$ be bounded and let $\tau \ge 2$: for the canonical sequence $n(k) \coloneqq k e_N$, we achieve a rate of $ \left( ( \log k)^6 / k^{2/3} \right)^{2r/(2r+2d)}$ for a sample of size $k^2$. The main reason for the sub-optimal rate is due to the fact that we allow for a variety of probability distributions of the $X(s)$ and of dependence structures within the lattice. The dependence in our model can spread in every dimension of the lattice, hence, observing data on an additional lattice dimension can become more and more redundant.  The technical reason for the worse rate of convergence of our estimator is the asymptotic decay of the Bernstein type inequalities for dependent data which we obtain and are presented in \ref{Appendix_ExpInequalities}. These can be compared with results of \cite{white1991some} and \cite{frankeBernstein}. The inequalities are derived under minimal assumptions on the distribution of the random field and consequently, only guarantee a slower rate which reflects the effective size of a dependent sample and not its nominal size.\\
We conclude this section with another example for piecewise $(L,r)$-Hölder continuous regression functions which means that there is a finite partition $\bigcup_{i=1}^S U_i$ of the domain of $m$ such that $m$ is $(L,r)$-Hölder continuous on each $U_i$, $1\le i \le S$.

\begin{corollary}\label{PiecewiseSmoothFunctions}
Let the same conditions be satisfied as in Corollary~\ref{RateOfConvergenceExpErrorsHaarBasis}. Additionally assume that $X$ takes values in a bounded domain $D = [-w , w ]^d$, for some $w\in \N_+$. The regression function $m$ is bounded by $B$ and is piecewise $(L,r)$-Hölder continuous such that for all $j \ge 0$ the condition
\begin{align}\label{PiecewiseHölderRegCond}
		\# \left\{\gamma \in  \{-2^{j}w,\ldots, 2^{j}w -1\}^d : m \text{ is not Hölder continuous on } 2^{-j}[\gamma,\gamma + e_n) \right\} \le C 2^{(d-1)j}
\end{align}
is satisfied for some constant $C$. Assume that the distribution of the $X(s)$ admits a density $g$ which is essentially bounded. Define for some $C_0,C_1\in\R_+$ the thresholding sequence and the resolution by
\begin{align*}
		\lambda_k^2 = C_0\, \exp\left\{ \frac{ 1\wedge 2r + d }{1\wedge 2r + d(1+2/\tau)} \log \widetilde{R}(n(k))   \right\},\quad j_1 = \floor*{C_1 - \frac{1}{1\wedge 2r + d(1+2/\tau)} \frac{\log \widetilde{R}(n(k))} {\log 2} }.
\end{align*}
Then, the $L^2$-error is in $\cO\left(	\widetilde{R}(n(k))^{ 1\wedge 2r \,/\, [ 1\wedge 2r + d(1+2/\tau)]}	\right)$.
\end{corollary}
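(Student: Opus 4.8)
The plan is to mimic the two-step argument behind Corollary~\ref{RateOfConvergenceExpErrorsHaarBasis}: first I would bound the deterministic approximation error appearing in the refined estimate of Theorem~\ref{NonlinRateOfConvExpDecr} by exhibiting a convenient piecewise constant $f\in\cF_k$, and then I would balance that bound against the penalty term $\lambda_k^2\,K^*(k)$ and the estimation error via the prescribed growth of $\lambda_k$ and $j_1$. Throughout I take the coarsest resolution $j_0=0$ and the constant sequence $w_k\equiv w$, so that $D_k$ has full $\mu$-measure, the finest partition $\prod_{u_{max}}$ is the tiling of $D_k$ into the dyadic cubes of edge length $2^{-j_1}$ indexed by $A_{j_1,k}=\{-2^{j_1}w,\ldots,2^{j_1}w-1\}^d$, and $K^*(k)=(2^{j_1(k)+1}w)^d\asymp 2^{d\,j_1(k)}$. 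Because $X$ takes values in the bounded set $D$, the tail contribution $\p(\norm{X}_\infty\ge w_k)$ that shows up in the proof of Corollary~\ref{RateOfConvergenceExpErrorsHaarBasis} is identically zero here, and it only remains to control the cube-by-cube error of $f$.

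For the approximation error I would take $f\in\cF_c\circ\prod_{u_{max}}$ equal, on the cube indexed by $\gamma$, to $m(x^*(\gamma))$ for some point $x^*(\gamma)$ in that cube intersected with $\mathrm{dom}(m)$; by the identity $(2^d-1)(u_{max}-1)+(2w)^d=K^*(k)$ the function $f$ has at most $K^*(k)$ nonzero Haar coefficients. Now split the cubes into ``good'' ones, on which $m$ is $(L,r)$-Hölder continuous, and ``bad'' ones, of which by \eqref{PiecewiseHölderRegCond} there are at most $C\,2^{(d-1)j_1}$. On a good cube the diameter in $\norm{\,\cdot\,}_\infty$ equals $2^{-j_1}$, hence $|f-m|^2\le L^2\,2^{-2r j_1}$ there, while the good cubes carry total $\mu$-mass at most $1$; on a bad cube $|f-m|^2\le 4B^2$ and, since the density $g$ is essentially bounded, the $\mu$-measure of that cube is at most $\norm{g}_\infty\,2^{-d j_1}$. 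Summing the two contributions,
\[
	\int_{\R^d}|f-m|^2\intd{\mu}\ \le\ L^2\,2^{-2r j_1}+4B^2C\norm{g}_\infty\,2^{(d-1)j_1}\,2^{-d j_1}\ =\ \cO\!\left(2^{-(1\wedge 2r)\,j_1}\right).
\]

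Inserting $f$ with $u=u_{max}$ into the refined bound of Theorem~\ref{NonlinRateOfConvExpDecr}, and using $(2\cdot 2^{j_1-j_0}w_k)^{d}=K^*(k)$ together with $\widetilde{R}(n(k))=\bigl(\prod_{i=1}^N\log n_i(k)\bigr)^{3+4/\tau}\big/|I_{n(k)}|^{1/(N+1)}$, I get
\[
	\E{\int_{\R^d}|\hat m_k-m|^2\intd{\mu}}\ \lesssim\ \lambda_k^2\,K^*(k)\ +\ 2^{-(1\wedge 2r)j_1}\ +\ \bigl(K^*(k)\bigr)^{1+2/\tau}\widetilde{R}(n(k)).
\]
With the choices of $\lambda_k$ and $j_1$ in the statement one has $2^{j_1}\asymp\widetilde{R}(n(k))^{-1/(1\wedge 2r+d(1+2/\tau))}$, hence $K^*(k)\asymp\widetilde{R}(n(k))^{-d/(1\wedge 2r+d(1+2/\tau))}$, and $\lambda_k^2=C_0\,\widetilde{R}(n(k))^{(1\wedge 2r+d)/(1\wedge 2r+d(1+2/\tau))}$; substituting these, each of the three terms on the right becomes $\cO\bigl(\widetilde{R}(n(k))^{(1\wedge 2r)/(1\wedge 2r+d(1+2/\tau))}\bigr)$, which is the asserted rate. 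The same computation shows $K^*(k)\,\lambda_k^2\to 0$ and the second growth hypothesis of Theorem~\ref{NonlinRateOfConvExpDecr}, which here reads $(K^*(k))^{1+2/\tau}\widetilde{R}(n(k))\to 0$, so that theorem indeed applies under the present parameter choices.

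I expect the single genuinely non-routine point to be the approximation-error estimate, and within it the observation that the non-smooth cubes contribute only $\cO(2^{-j_1})$ rather than a constant: this is exactly where the counting hypothesis \eqref{PiecewiseHölderRegCond} and the essential boundedness of $g$ are used, and it is the reason the final exponent features $1\wedge 2r$ in place of $2r$. The verification of the hypotheses of Theorem~\ref{NonlinRateOfConvExpDecr} and the balancing of the three terms then proceed in exact parallel with the proof of Corollary~\ref{RateOfConvergenceExpErrorsHaarBasis}.
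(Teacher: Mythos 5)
Your proposal is correct and follows essentially the same route as the paper: the same piecewise constant $f$ at resolution $j_1$ with $j_0=0$, the same split into Hölder-continuous cubes (contributing $L^2 2^{-2rj_1}$) and the at most $C2^{(d-1)j_1}$ exceptional cubes (contributing $\cO(\norm{g}_\infty 2^{-j_1})$ via the bounded density), followed by balancing against $\lambda_k^2 K^*(k)$ and the estimation term. The only difference is that you carry out the term-balancing explicitly, which the paper leaves as a one-line remark.
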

\begin{proof}
The proof is similar: let there be given a resolution $j_1$ and fix the roughest resolution as $j_0 \coloneqq 0$ which correspond to a partition $\pi$ of the cube $[-w,w]^d$. Define $f$ as in the proof of Corollary~\ref{RateOfConvergenceExpErrorsHaarBasis}. Denote by $D_{dc}(j_1)$ the set of dyadic cubes of edge length $2^{-j_1}$ which contain points where $m$ is not continuous. Then the approximation error for a resolution up to $j_1$ is at most
\begin{align*}
		\int_D |f-m|^2\,\intd{\mu} \le L^2 2^{-2j_1 r} + \int_{D_{dc}(j_1)} |f-m|^2\,\intd{\mu} \le L^2 2^{-2j_1 r} + (2B)^2 \norm{g}_{\infty} 2^{-j_1 d} \, C 2^{(d-1) j_1 },
\end{align*}
here we use the regularity condition on the discontinuities from \eqref{PiecewiseHölderRegCond}. The definitions of $j_1$ and $\lambda_k$ equalize the individual error terms.
\end{proof}

Before we discuss this result, consider the requirement in Equation~\eqref{PiecewiseHölderRegCond}: if $d=1$, then \eqref{PiecewiseHölderRegCond} requires the number of discontinuities to be finite. Next, let $d\ge2$. We consider the boundary $\partial U$ of one such partitioning element $U \in \{ U_i: i=1,\ldots,S\}$. Therefore $\partial U$ is a finite union of smooth hypersurfaces, $\partial U = \bigcup_{t=1}^T H_t$, where each $H_t$ can be represented as the graph of a $C^1$-function: pick one such hypersurface $H$ which has w.l.o.g. the following representation and location in $\R^d$
\[
		H = \left\{ \left(x_{-d}, h(x_{-d}) \right): x_{-d} \in B \right\} \subseteq [0,1]^d, \text{ where } x_{-d} \coloneqq (x_1,\ldots,x_{d-1}) \text{ and } B \subseteq [0,1]^{d-1}
\]
and $h: B \rightarrow \R$ such that $\nabla h$ can be extended to a continuous function on $\overline{B}$. %Denote the partial derivatives of $h$ w.r.t.\ the $i$-th coordinate by $h_{\cdot,i}$. Then with the definition of the Gram determinant
%\[
	%G_h (x) \coloneqq \text{det } [\, \langle h_{\cdot,i} (x), h_{\cdot,j} (x) \rangle \,]_{1\le i, j \le d-1}, \text{ for } x\in B,
	%\]
%the $d-1$-dimensional volume of the hypersurface $H$ is in $\cO(2^{-(d-1) j })$, more precisely,
%\[
		%\lambda^{d-1}(H) = \int_{B} \sqrt{ G_h(x) } \,\intd{x} \le \norm{ G_h }_{\infty}^{1/2} \, 2^{-(d-1) j }.
%\]
Let there be given the dyadic partition $\pi_j$ of the unit cube $[0,1]^d$ in $2^{dj}$ equivolume dyadic subcubes of edge length $2^{-j}$. Consider a partition element $\Box\in \pi_j$ which lies in the plane where the $d$-th dimension is zero  and intersects with $B$, i.e., $\Box\cap B \neq 0$. Then the number of partition elements $\widetilde{\Box}\in\pi_j$ which intersect with the image of $\Box \cap B$ under $h$ is bounded: indeed, use the "steepest ascent times longest path" approach which yields a maximal "height". Divide this number by the edge length of the cubes, this yields the approximate number of these partitioning elements. More formally and more precisely,
\begin{align*}
		& \Big| \{ \widetilde{\Box}\in \pi_j: \widetilde{\Box} \cap \{ (\Box \cap B) \times h (\Box \cap B) \} \neq \emptyset \} \Big|  \\
		&\qquad\qquad\qquad \le \left( 2^{-j }\,\sqrt{d-1}  \,\cdot\, \max_{x \in \overline{ B} } \norm{ \nabla h (x) }_{2 } \right) \Big/ 2^{-j} + 1 \le C
\end{align*}
for a constant $C$ which is independent of $j\in\N_+$. Hence, the total number of partition elements $\widetilde{\Box} \in\pi_j$ which intersect with $H$ is in $\cO( 2^{ (d-1) j } )$. Consequently, the total number of partition elements which intersect with $\partial U$ is in $\cO( 2^{(d-1)j})$ as required in \eqref{PiecewiseHölderRegCond}.\\
In light of this interpretation of the condition in \eqref{PiecewiseHölderRegCond}, Corollary~\ref{PiecewiseSmoothFunctions} illustrates that given there are discontinuities, an increase in the smoothness increases the rate of convergence only as long as $r < 1/2$, otherwise, if $r \ge 1/2$, the negative impact at the borders $\partial U_i$ is too prominent and dominates the approximating property on the parts of $D$ where $m$ is smooth.

\section{Simulation Examples}\label{Section_SimulationExamples}

In the first part of this section we introduce an algorithm to simulate a Markov random field $\{ (X(v),Y(v)): v\in V\}$ on a graph $G=(V,E)$ with a finite set of nodes $V$. In the second part we give a simulation example in the case where the graph is a finite regular lattice in two dimensions. 

\subsection{The Simulation Procedure}
We compute with the simulated random field the penalized least squares estimator as it is defined in Section~\ref{Section_NonlinRegWavelets} and compare its performance to the same estimator which is computed with an independent reference sample. Here an independent reference sample means a sample $(X_i,Y_i)$ of the same size and with the same marginal distributions as the random field $(X(v),Y(v))$, i.e., $\cL(X_i,Y_i) = \cL(X(v),Y(v))$ for $v\in V$ and $i=1,\ldots,|V|$.\\
The main idea for the simulation procedure dates back at least to \cite{kaiser2012} and is based on the concept of \textit{concliques} which has the advantage that simulations can be performed faster when compared to the Gibbs sampler; an introduction to Gibbs sampling offers \cite{bremaud1999markov}. We start with the definition of concliques:

\begin{definition}[Concliques, cf. \cite{kaiser2012}]
Let $G = (V,E)$ be an undirected graph with a countable set of nodes $V$ and let $C \subseteq V$. If all pairs of nodes $(v,w) \in C\times C$ satisfy $\{v,w\} \notin E$, the set $C$ is called a conclique. A collection $C_1,\ldots,C_n$ of concliques that partition $V$ is called a conclique cover; the collection is a minimal conclique cover if it contains the smallest number of concliques needed to partition $V$.
\end{definition}

Let now $\pspace$ be a probability space and let $(S,\fS)$ be a state space. Let $Y=\{ Y(v): v\in  V \}$ be a collection of $S$-valued random variables. Then the family $\big\{ \, \p( Y(v) \in \,\cdot\, \,|\, Y(w), w\in I\setminus\{v\} ) \, \big\}$ is a full conditional distribution of $Y$.\\
Suppose that $G$ is a graph whose nodes are partitioned into a conclique cover $C_1,\ldots,C_n$. Let $Y=(Y(v): v\in V)$ be a Markov random field on $G$ which takes values in $(S,\fS)$ with a full conditional distribution $\Big\{ F_v\left( Y(v) \in A \,|\, Y(w), w\in \operatorname{Ne}(v) \right) \: : v \in V \Big\}$ and an initial distribution $\mu_0$. Note that the joint conditional distribution of a conclique $Y(C_i)$ given its neighbors which are contained in $Y(C_1),\ldots,Y(C_{i-1}),Y(C_{i+1}),\ldots, Y(C_n)$ factorizes as the product of the single conditional distributions due to the Markov property. This means that we can simulate the stationary distribution of the Markov random field with a Markov chain (under mild regularity conditions).

\begin{algorithm}[Simulation of Markov random fields with concliques, \cite{kaiser2012}]\label{concliqueAlgo}
Simulate the starting values according to an initial distribution $\mu_0$ and obtain the vector of $Y^{(0)} = \left( Y^{(0)}(C_1), \ldots, Y^{(0)}(C_n) \right) $. In the next step, given a vector $Y^{(k)} = \left( Y^{(k)}(C_1), \ldots, Y^{(k)}(C_n) \right) $, simulate for $i=1,\ldots,n$ the concliques $Y^{(k+1)} (C_i)$ given the $(k+1)$-st simulation of the neighbors in $Y^{(k+1)}( C_1), \ldots, Y^{(k+1)}( C_{i-1})$ and $k$-th simulation of the neighbors in $Y^{(k)} (C_{i+1}),\ldots, Y^{(k)} (C_{n})$ with the specified full conditional distribution. Repeat this step, until the maximum iteration number is reached.
\end{algorithm}

The following example treats the multivariate normal distribution on a graph and can be found in \cite{cressie1993statistics}. Let $G=(V,E)$ be a finite graph and $\{ Y(v): \, v\in V \}$ be multivariate normal with expectation $\alpha \in \R^{|V|}$ and covariance $\Sigma \in \R^{ |V|\times |V|}$ in that $Y$ has the density
\[
		f_Y( y) = (2\pi)^{-\frac{d}{2}} \text{det}(\Sigma)^{-\frac{1}{2}} \exp \left\{	-\frac{1}{2} (y-\alpha)^T \Sigma^{-1} (y-\alpha) \right\}.
\]
Then for a node $v$ we have using the notation $P$ for the precision matrix $\Sigma^{-1}$
\[
		Y(v) \, |\, Y(-v) \sim \cN\left(  \alpha(v) - (P(v,v))^{-1}  \sum_{w \neq v} P(v,w) \Big( y(w) - \alpha(w) \Big)  , (P(v,v))^{-1} \right).
\]
Since $P = \Sigma^{-1}$ is symmetric and since we can assume that $\left(P(v,v) \right)^{-1} > 0$, $Y$ is a Markov random field if and only if for all nodes $v\in V$
\begin{align*}
		P(v,w) \neq 0 \text{ for all } w \in \operatorname{Ne}(v) \text{ and }
		P(v,w) = 0 \text{ for all } w \in V \setminus \operatorname{Ne}(v).
\end{align*}

\cite{cressie1993statistics} investigates the conditional specification
\begin{align}\label{concliquesMVN}
		Y(v) \,|\, Y(-v) \sim \cN\left( \alpha(v) + \sum_{w \in \operatorname{Ne}(v)} c(v,w) \big(Y(w) - \alpha(w) \big), \quad \tau^2(v) \right)
\end{align}
where $C=\big( c(v,w) \big)_{v,w}$ is a $|V|\times |V|$ matrix and $T = \text{diag}(\tau^2(v): v\in V)$ is a diagonal matrix such that the coefficients satisfy the necessary condition $\tau^2(v) c(w,v) = \tau^2(w) c(v,w)$ for $v\neq w$ and $c(v,v) = 0$ as well as $c(v,w) = 0 = c(w,v)$ if $v,w$ are no neighbors. This means $P(v,w) = -c(v,w) P(v,v)$, i.e., $\Sigma^{-1} = P =  T^{-1}  (I-C)$. If $I-C$ is invertible and $(I-C)^{-1} T$ is symmetric and positive definite, then the entire random field is multivariate normal with	$Y \sim \cN\left( \alpha, (I-C)^{-1} T \right)$.\\
With this insight it is possible to simulate a Gaussian Markov random field using concliques with a consistent full conditional distribution. In particular, it is plausible in many applications to use equal weights $c(v,w)$ (cf. \cite{cressie1993statistics}): we can write the matrix $C$ as $C=\eta H$ where $H$ is the adjacency matrix of $G$, i.e., $H(v,w)$ is 1 if $v,w$ are neighbors, otherwise it is 0. We know from the properties of the Neumann series that $I-C$ is invertible if $(h_0)^{-1} < \eta < (h_m)^{-1}$ where $h_0$ is the minimal and $h_m$ the maximal eigenvalue of $H$.

\subsection{A Numerical Example}
We can simulate a $d$-dimensional Markov random field $Z=(Z_1,Z_2,\ldots,Z_d)$ on a graph $G$ with the ansatz of \cite{cressie1993statistics}. The marginals of the single components $\{ Z_i(v): v\in V \}$ are standard normally distributed and the components $Z_1, \ldots, Z_d$ can be dependent among each other.\\
We give an example where we choose a lattice in two dimensions; the edge length is $40$ such that there are 1600 observations in total. We run on this lattice a Markov chain of $M_1=1000$ iterations for the simulation of a three-dimensional random  field $Z=(Z_1,Z_2,Z_3)$. Therefore we use a Gaussian copula to simulate $Z_1$ and $Z_2$ as dependent and $Z_3$ as independent. The correlation of $Z_1$ and $Z_2$ is approximately 0.7. \\
The parameter $\eta$ which describes the dependence within a random field $Z_i$ is chosen for all three components as $\eta = 0.25$. Note that $|\eta| \approx 0.25$ means a strong dependence whereas $\eta \approx 0$ indicates independence. In this case the admissible range for $\eta$ is very close to $(-0.257, 0.257)$. Note that the interval $(-0.25, 0.25)$ is the corresponding parameter space for a lattice wrapped on a torus. The influence of the parameter $\eta$ is as follows: if $\eta$ is positive and an observation $Z(v) > 0$, then $Z(v)$ increases the expectation of $Z(w)$ for all $w \in \operatorname{Ne}(v)$, see Equation~\eqref{concliquesMVN}. Conversely, if $\eta$ is negative, a positive observation $Z(v)$ decreases the expectation of its neighbors $Z(w)$.\\
In the next step, we construct a two-dimensional random field $( X_1,X_2)$ from $Z=(Z_1,Z_2,Z_3)$ and a one-dimensional random field with error terms $\epsilon$. For the error terms, we choose the independent component $Z_3$, thus, these are standard normally distributed. For $(X_1, X_2)$ we retransform $(Z_1,Z_2)$ as follows:\\
(a) We retransform each $Z_i$ with the inverse distribution function of the standard normal distribution to the interval [-1,1] and obtain $X_i$, hence, there remains a correlation between $X_1$ and $X_2$.\\
(b) We retransform $Z_i$ as in (a), additionally, we transform linearly all $X_2$ which are less than 0.1 onto $[0, 0.5]$ and the remaining $X_2$ onto $[0.5, 1]$, i.e.,
\begin{align}\label{ExampleSecondDistribution}
		X_2 \rightsquigarrow \frac{0.5}{0.1} \, X_2 \, \I\{X_2 < 0.1 \} + \left( \frac{ 0.5 - 0.1 }{1- 0.1}  + \frac{1- 0.5 }{1- 0.1} \, X_2\, \I\{ 0.1 \le X_2 \} \right).
\end{align}
Hence, in (a) the marginals of $(X_1,X_2)$ are approximately uniformly distributed on $[-1,1]$ and the correlation of $X_1$ and $X_2$ is approximately 0.68. In (b) the lower half of $[-1,1]^2$ contains approximately only 10\% of the data and the upper half 90\%. The correlation of $X_1$ and $X_2$ is approximately 0.65. The scatterplot of the two random fields is given in Figure~\ref{fig:Figure_RegressionFunction0}.\\
The regression functions are given as
\begin{align*}
	m_1(x) &\coloneqq 4 + 6x_1^2 - 4x_2^2 \quad\text{ and }\quad m_2(x) = m_1(x)\,\I\{ \norm{x}_2 \le 0.5 \} - m_1(x) \, \I\{\norm{x}_2 > 0.5 \}.
\end{align*}
All in all, we consider four different set-ups of the kind $Y(v) = m_i(X(v)) + \epsilon(s)$. For the estimation procedure, we choose our Haar basis from \eqref{HaarWaveletsLinSpace} and \eqref{HaarWaveletsLinSpace2}.\\
Now, let there be given a simulated random field $(X(v), Y(v): v\in V)$. We want to compare the estimator of the regression function $m$ which is obtained from this random field with the estimator which is obtained from an independent reference sample $\{(X_i,Y_i): i=1,\ldots,|V| \}$ of the same size. Therefore, we need to compute the $L^2$-error $\int_{\R^d} (\hat{m}-m)^2 \intd{\mu}$ in both cases. Usually, when computing the $L^2$-error, one partitions the data in a learning and in a test sample. The learning sample should comprise approximately 80\% of the data, cf. \cite{kohler2008nonlinear}. Then the estimator is computed from the learning sample and the $L^2$-error is computed from the test sample. However, since we want to compare the estimator from the dependent setting with the independent setting, we proceed in a different way. Namely, we compute for both estimators the $L^2$-error with a second independent sample of $X$ by Monte Carlo integration: let $\hat{m}_k$ be the estimator obtained from the random field resp. the independent sample for a certain threshold $\lambda$. Denote the second independent sample by $\{X'_i: i=1,\ldots,|V| \}$. Then the $L^2$-error is approximately,
$$L^2(\hat{m}_k) \approx |V|^{-1} \sum_{i=1}^{|V|} \left| \hat{m}_k( X'_i ) - m( X'_i ) \right|^2. $$
We point out the advantage of this method: if the law of the $X(s)$ cannot be given analytically, then numerical integration is unavoidable. However, in order to compare the estimators, there is the need for a neutral testing sample. This is the second independent reference sample $X'$.\\
This step is repeated $M_2=1000$ times and yields for a given threshold $\lambda$ an approximate mean and standard deviation of the $L^2$-error. Then we choose the threshold $\lambda$ which minimizes the $L^2$-error in the mean. The results are given in Table~\ref{TableExample}: the first table contains the results for the random field, the second those of the independent reference sample.\\
For the independent samples the design distribution of $X$ has in both cases correlations which match those of the respective dependent samples.\\ 
Note that in all cases the hard thresholding value $\lambda = 0.08$ yields the best fit. Furthermore, the $L^2$-error measure for independent samples is always better than for the corresponding dependent samples. The reason is the choice of $\eta$ which is here maximal and means a strong dependence within the lattice. We remark that further simulations show that a minor decrease in $\eta$ influences very positively the distribution of the $L^2$-error which then corresponds almost to that of the independent reference samples in all four cases.\\
Figures~\ref{fig:Figure_RegressionFunction1} and \ref{fig:Figure_RegressionFunction2} depict the best fit in each case for the dependent sample: one finds that the regression estimator is able to adapt both to the local smoothness of the underlying regression function and to the design distribution. This is quite pronounced for the second distribution where the fourth quadrant of $[-1,1]^2$ is only sparsely covered with data. In this area the estimator is in both cases nearly constant.

\begin{table}[ht]
\begin{center}
\begin{tabular}{|| c || c | c | c | c ||  }
\hline
\hline
	&	\multicolumn{4}{| c ||}{Estimates on two-dimensional lattice} \\
\hline
\hline
$\lambda$ & (a) with $m_1$ & (b) with $m_1$ & (a) with $m_2$ & (b) with $m_2$ \\
\hline
\hline
\multirow{2}{*}{0.0}	&	2.109	&	2.021	&	0.628	&	0.566	\\
	&	(0.502)	&	(0.328)	&	(0.176)	&	(0.112)	\\
\hline									
\multirow{2}{*}{0.04}	&	1.962	&	1.923	&	0.450	&	0.444	\\
	&	(0.485)	&	(0.324)	&	(0.153)	&	(0.106)	\\
\hline									
\multirow{2}{*}{0.08}	&	1.914	&	1.901	&	0.333	&	0.379	\\
	&	(0.469)	&	(0.320)	&	(0.108)	&	(0.087)	\\
\hline									
\multirow{2}{*}{0.12}	&	2.074	&	2.067	&	0.421	&	0.489	\\
	&	(0.482)	&	(0.343)	&	(0.110)	&	(0.091)	\\
\hline									
\multirow{2}{*}{0.16}	&	2.263	&	2.269	&	0.509	&	0.621	\\
	&	(0.499)	&	(0.364)	&	(0.125)	&	(0.096)	\\
\hline									
\multirow{2}{*}{0.20}	&	2.471	&	2.455	&	0.589	&	0.727	\\
	&	(0.489)	&	(0.361)	&	(0.139)	&	(0.103)	\\
\hline
\hline
	& \multicolumn{4}{| c ||}{Independent reference estimates} \\
\hline
\hline
$\lambda$ & (a) with $m_1$ & (b) with $m_1$ & (a) with $m_2$ & (b) with $m_2$ \\
\hline
\hline
\multirow{2}{*}{0.0}	&	1.998	&	1.900	&	0.564	&	0.501	\\
	&	(0.426)	&	(0.281)	&	(0.091)	&	(0.046)	\\
\hline									
\multirow{2}{*}{0.04}	&	1.855	&	1.800	&	0.388	&	0.376	\\
	&	(0.419)	&	(0.279)	&	(0.082)	&	(0.042)	\\
\hline									
\multirow{2}{*}{0.08}	&	1.788	&	1.758	&	0.253	&	0.285	\\
	&	(0.411)	&	(0.281)	&	(0.043)	&	(0.027)	\\
\hline									
\multirow{2}{*}{0.12}	&	1.944	&	1.904	&	0.332	&	0.385	\\
	&	(0.422)	&	(0.285)	&	(0.039)	&	(0.032)	\\
\hline									
\multirow{2}{*}{0.16}	&	2.133	&	2.111	&	0.417	&	0.520	\\
	&	(0.432)	&	(0.299)	&	(0.039)	&	(0.039)	\\
\hline									
\multirow{2}{*}{0.20}	&	2.329	&	2.294	&	0.483	&	0.632	\\
	&	(0.446)	&	(0.307)	&	(0.053)	&	(0.041)	\\
\hline
\hline
\end{tabular}
\caption{$L^2$-error of regression problems 1 - 4 based on 1000 simulations. For the dependent sample we run 1000 iterations of the MCMC algorithm of \cite{kaiser2012}. The estimated mean and in brackets the estimated standard deviation for a resolution $j=5$. The threshold $\lambda \approx 0.08$ is optimal in all cases. Note that the estimator from the dependent sample performs less well than the estimator from the independent reference sample both the mean and the standard deviation are bigger.}
\label{TableExample}
\end{center}
\end{table}

\begin{figure}[ht]\centering
	\includegraphics[width = \textwidth]{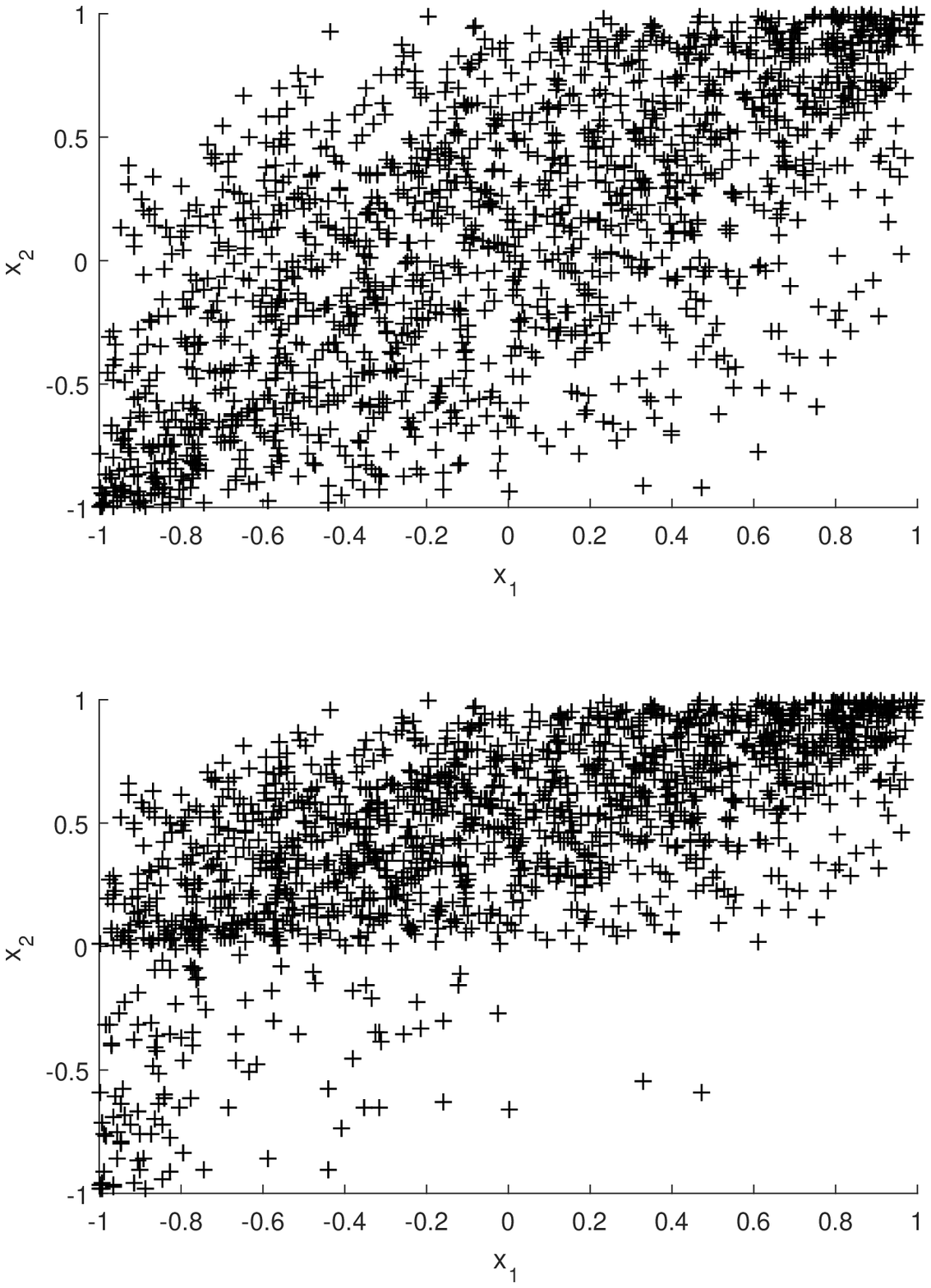}		
	\caption{Simulated scatterplot of the two 2-dimensional distributions. The first distribution (top) is symmetric. The second from Equation~\ref{ExampleSecondDistribution} (bottom) has little support in the fourth quadrant.}
	\label{fig:Figure_RegressionFunction0}
\end{figure}

\begin{figure}[ht]\centering
	\includegraphics[width = \textwidth]{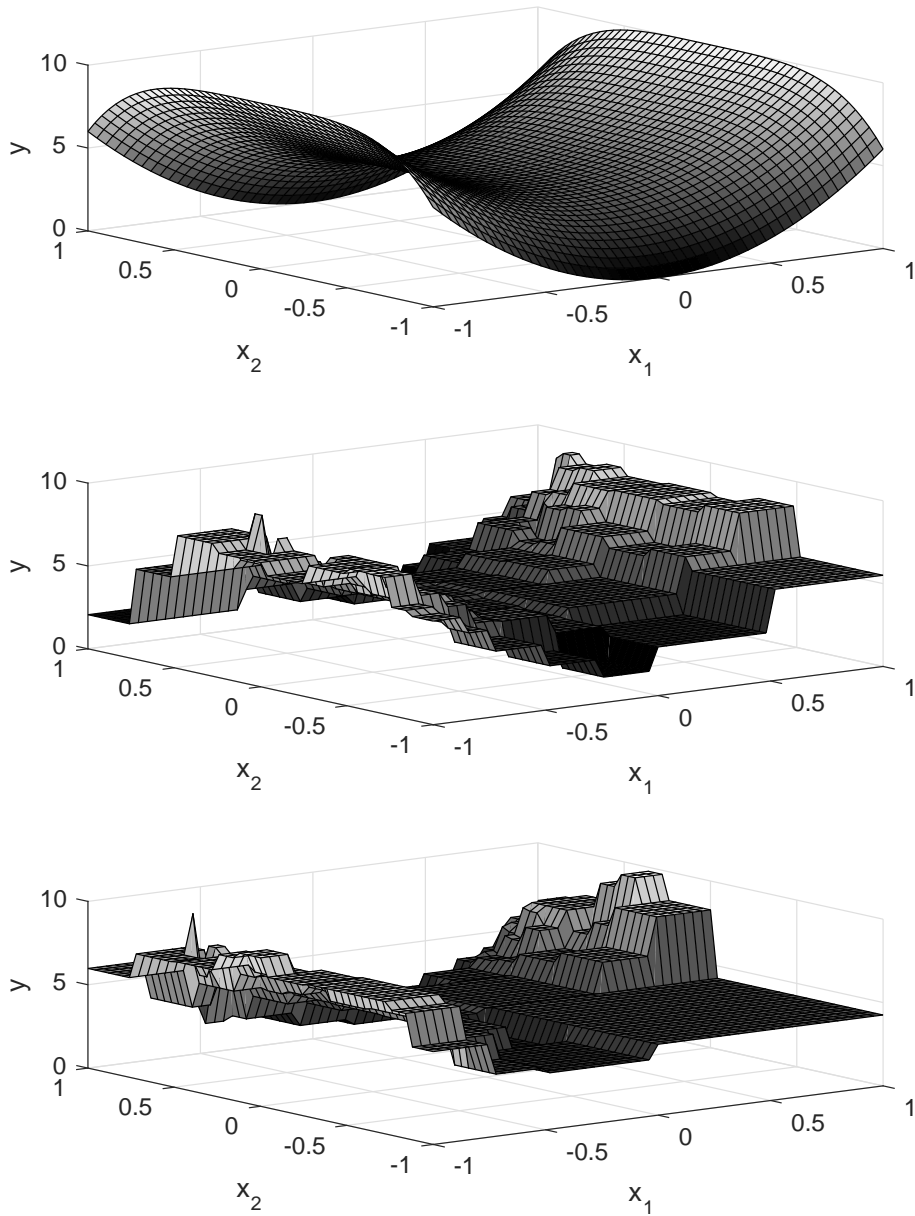}		
	\caption{True regression function $m_1$ (top) and estimates $\hat{m}_k$ with uniform data $X$ (middle) and nonuniform data (bottom). Note that the partition which is chosen depends on the data and on the local smoothness of the function. In particular, this is quite pronounced in the bottom figure in the fourth quadrant, i.e., $0\le X_1 \le 1$ and $-1\le X_2 \le 0$.}
	\label{fig:Figure_RegressionFunction1}
\end{figure}

\begin{figure}[ht]\centering	
		\includegraphics[width = \textwidth]{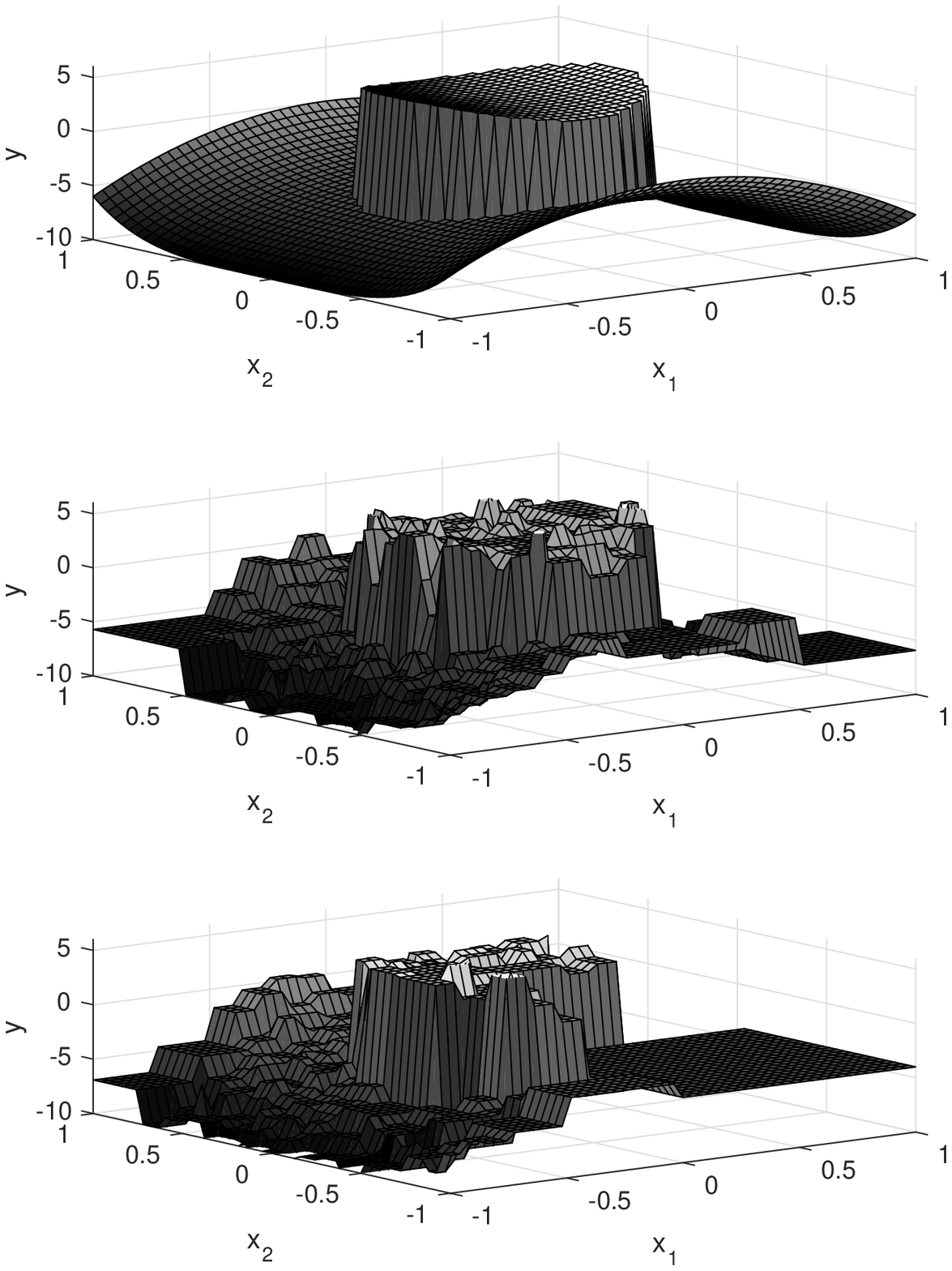}	
	\caption{True regression function $m_2$ (top) and estimates $\hat{m}_k$ with uniform data $X$ (middle) and nonuniform data (bottom). Again, the partition depends on the data and on the local smoothness of the function.}
	\label{fig:Figure_RegressionFunction2}
\end{figure}

%+++++++++++++++++++++++++++++++++++++++++++++++++++++++++++++++++++++++++++++++
%+++++++++++++++++++++++++++++++++++++++++++++++++++++++++++++++++++++++++++++++
%+++++++++++++++++++++++++++++++++++++++++++++++++++++++++++++++++++++++++++++++
%+++++++++++++++++++++++++++++++++++++++++++++++++++++++++++++++++++++++++++++++
\section{Proofs}\label{Section_Proofs}
We write $A$, $A_i$, $\tilde{A}_i$ resp. $C$, $C_i$ and $\tilde{C}_i$ for constants whose values are not necessarily the same.

\begin{proof}[Proof of Theorem~\ref{ConsisNonlinReg}]
We have with the defining property of $m$ and the properties of the conditional expectation for an independent observation $(X'(e_N),Y'(e_N))$
\begin{align*}
		&\E{ | \hat{m}_k(X'(e_N))-Y'(e_N) |^2 \,\big|\, X(I_{n(k)}),Y(I_{n(k)}) }  \\
		&\qquad=  \E{ | \hat{m}_k(X'(e_N))-m(X'(e_N)) |^2 \,\big|\, X(I_{n(k)}),Y(I_{n(k)}) } + \E{ | m(X'(e_N))-Y'(e_N) |^2 }.
		\end{align*}
Thus,
\begin{align*}
\int_{\R^d} | \hat{m}_k - m  | ^2 \intd{\mu} = \E{ |\hat{m}_k(X'(e_N))-Y'(e_N)|^2 \,\big|\, X(I_{n(k)}), Y(I_{n(k)}) } - \E{ |m(X'(e_N))-Y'(e_N) |^2 }.
\end{align*}
Since $\E{ |m(X'(e_N))-Y'(e_N) |^2 }$ is constant for all $k$ and $a - b = (\sqrt{a} - \sqrt{b} )(\sqrt{a} + \sqrt{b} )$, it suffices to prove that the following terms vanish for $k\rightarrow \infty$
\begin{align}
	0 &\le  \Bigg\{ \E{ (\hat{m}_k(X'(e_N))-Y'(e_N))^2 \,|\, X(I_{n(k)}), Y(I_{n(k)}) }^{1/2} - \inf_{\substack{f\in \cF_k, \\ \norm{f}_{\infty} \le \beta_k}} \E{ (f(X(e_N)) - Y(e_N) )^2 }^{1/2} \Bigg\} \nonumber \\
	& + \left\{ \inf_{\substack{f\in \cF_k, \\ \norm{f}_{\infty} \le \beta_k}} \E{ (f(X(e_N)) - Y(e_N) )^2 }^{1/2} - \E{ (m(X(e_N))- Y(e_N))^2}^{1/2} \right\} =: T_{1,k} + T_{2,k}. \label{NonlinWaveCons0}
\end{align}
The second term $T_{2,k}$ in \eqref{NonlinWaveCons0} converges to zero in the mean (resp. a.s.): this follows immediately with the reverse triangle inequality and the denseness assumption on the function spaces from \eqref{LinSpaceGeneral}; in the case of Haar wavelet spaces from \eqref{HaarWaveletsLinSpace} we need here that the sequence $(w_k: k\in\N)$ converges to infinity if the distribution of the $X(s)$ is not bounded in order to guarantee the denseness.\\
The first term $T_{1,k}$ in \eqref{NonlinWaveCons0} can be bounded in the following way (cf. again \cite{kohler2003nonlinear}) 
\begin{align*}
	T_{1,k} &\le 2 \E{ (Y(e_N)-Y_L(e_N) )^2 }^{1/2} + 2\left(	\frac{1}{|I_{n(k)}|} \sum_{s\in I_{n(k)}} (Y(s) - Y_L(s) )^2 \right)^{1/2} + \max_{ \widehat{J} \subseteq J} pen_k ( \widehat{J} ) \\
	&+ 2 \sup_{f \in T_{\beta_k} \cF_k} \left| \left(\frac{1}{|I_{n(k)}|} \sum_{s\in I_{n(k)}} |f(X(s))-Y_L(s) |^2\right)^{1/2} - \left( \E{ |f(X(e_N))-Y_L (e_N) |^2 } \right)^{1/2} \right|.
\end{align*}
Evidently, $pen_k( \widehat{J}) = \cO\left(	 \lambda_k^2\, K^*	\right) \rightarrow 0$ by assumption.
For $a.s.$ convergence of the entire term $T_{1,k}$, we need the ergodicity of the random field $\{ Y(s): s\in \Z^N \}$. This is guaranteed if the random field $Y$ is strong mixing and stationary by Theorem~\ref{MixingImpliesErgodicity}, Hence, we have $a.s.$
\begin{align*}
		&\left(	\frac{1}{|I_{n(k)}|} \sum_{s\in I_{n(k)}} (Y(s) - Y_L(s) )^2 \right)^{1/2} \rightarrow 	\E{ (Y(e_N)-Y_L(e_N) )^2 }^{1/2}  \text{ as } k \rightarrow \infty \\
		&\qquad\qquad \qquad \qquad\qquad \qquad  \qquad\qquad\text{ and } \E{ (Y(e_N)-Y_L(e_N) )^2 }^{1/2}  \rightarrow 0 \text{ as } L \rightarrow \infty.
\end{align*}
Consequently, it remains to show that
\begin{align}
		&S_k \coloneqq \sup_{f \in T_{\beta_k} \cF_k} \left| \left(\frac{1}{|I_{n(k)}|} \sum_{s\in I_{n(k)}} |f(X(s))-Y_L(s) |^2\right)^{1/2} - \left( \E{ |f(X(e_N))-Y_L(e_N) |^2 } \right)^{1/2} \right| \rightarrow 0, \label{NonlinWaveCons1}
\end{align} 
in the mean (resp. $a.s.$). For the convergence in the mean of \eqref{NonlinWaveCons1}, use the fact that $(\sqrt{a}-\sqrt{b})^2 \le |a-b|$, thus,
together with Hölder's inequality on probability spaces, the mean of $S_k$ satisfies
\[
	\E{ S_k } \le \E{  \sup_{f \in T_{\beta_k} \cF_k} \left| \left(\frac{1}{|I_{n(k)}|} \sum_{s\in I_{n(k)}} |f(X(s))-Y_L(s) |^2\right) - \left( \E{ |f(X(e_N))-Y_L(e_N) |^2 } \right) \right| }^{1/2},
\]
and apply Theorem~\ref{USLLNM} to the RHS. In case of $a.s.$-convergence, use again the relation $|\sqrt{a} - \sqrt{b} | \le \sqrt{|a-b|}$ and the continuity of the square root function. Hence, Theorem~\ref{USLLNM} applies in this case, too. In detail, we have for the tail distribution for $\epsilon > 0$ fix
\begin{align*}
		&\p\left( \sup_{f \in T_{\beta_k} \cF_k} \left| \left(\frac{1}{|I_{n(k)}|} \sum_{s\in I_{n(k)}} |f(X(s))-Y_L(s) |^2\right) - \left( \E{ |f(X(e_N))-Y_L(e_N) |^2 } \right) \right| > \epsilon \right) \\
		&\le \tilde{A}_1 H_{T_{\beta_k} \cF_k }\left( \frac{\epsilon}{128 \beta_k} \right) \exp\left\{ - \frac{ \tilde{A}_2 \epsilon \left( \prod_{i=1}^N n_i(k) \right)^{1/(N+1)}}{ \beta_k^2 \prod_{i=1}^N \log n_i(k) } 	\right\} \\
		&\le A_1 \exp\left\{ A_2 K^* \log \beta_k - \frac{ A_3  \left( \prod_{i=1}^N n_i(k) \right)^{1/(N+1)}}{ \beta_k^2 \prod_{i=1}^N \log n_i(k) }  \right\}, 
\end{align*}
where in the last inequality we use that the vector space dimension of $\cF_k$ is at most $K^*$. The constants $\tilde{A}_i, A_i$ depend on the lattice dimension, the bound on the mixing coefficients and $\epsilon>0$. One finds that \eqref{NonlinWaveCons1} converges to zero in the mean if
\[
			\beta_k^2\, K^*\, \log \beta_k \prod_{i=1}^{N} \log n_i(k) \Bigg/ \left(		\prod_{i=1}^N n_i(k) \right)^{1/(N+1)} \rightarrow 0 \text{ as } k \rightarrow \infty.
\]
$a.s.$-convergence of the term in \eqref{NonlinWaveCons1} follows with an application of the first Borel-Cantelli Lemma if additionally, for some positive $\delta > 0$
\[
		\beta_k^2 \, (\log k)^{1+\delta} \prod_{i=1}^{N}  \, \log n_i(k) \Bigg/ \left(		\prod_{i=1}^N n_i(k) \right)^{1/(N+1)} \rightarrow 0 \text{ as } k \rightarrow \infty.
\]
\end{proof}

\begin{lemma}[Variant of Lemma 1 in \cite{kohler2008nonlinear}]\label{RepresentationOfConstantFunctions}
Let $f\in \cF_c \circ \pi$ for a partition $\pi \in \prod_u$ for $1\le u \le u_{max} $. Then for fix $\omega\in\Omega$ there are balanced wavelets $f_{j_1},\ldots, f_{j_v} \in \cF_k$ which depend on this $\omega\in\Omega$, such that $\cF_c \circ \pi = \langle f_{j_1},\ldots,f_{j_v} \rangle$ in $L^2(\mu_n)$ and $v \le |\pi| = (2^d-1)(u-1) + (2 w_k)^d$.
\end{lemma}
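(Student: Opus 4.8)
The plan is an induction on the refinement index $u$, showing that for every $\pi\in\prod_u$ the space $\cF_c\circ\pi$ equals, in $L^2(\mu_n)$, the linear span of a subfamily of the empirical orthonormal wavelets of $\cF_k$ of cardinality $v\le|\pi|$; this yields the lemma, since $|\pi|=(2^d-1)(u-1)+(2w_k)^d$. The key building block, to be read off the Gram--Schmitt construction described above, is that for every dyadic cube $C=2^{-j}[\gamma,\gamma+e_N)$ with $j_0\le j\le j_1-1$ the balanced mother wavelets attached to $C$ --- a family $\cW_C$ with $|\cW_C|=\widetilde K_C\le 2^d-1$, all supported in $C$ --- together with $\mu_n(C)^{-1/2}\,\I\{C\}$ span in $L^2(\mu_n)$ exactly the functions that are constant on the $2^d$ dyadic children of $C$; moreover every member of $\cW_C$, and every father wavelet $f_{0,j_0,\gamma}$ with $\gamma\in A_{j_0,k}$, is one of the empirical orthonormal wavelets of $\cF_k$ from \eqref{HaarWaveletsLinSpace2}, and $\langle f_{0,j_0,\gamma}:\gamma\in A_{j_0,k}\rangle=\cF_c\circ\prod_1$ with $|A_{j_0,k}|=(2w_k)^d=|\prod_1|$.

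With this, the base case $u=1$ is immediate: the only partition is $\prod_1$ and the last identity gives $v\le (2w_k)^d=|\prod_1|$ (strict if $\mu_n$ annihilates some level-$j_0$ cell). For the step I would use that any $\pi\in\prod_{u+1}$ arises from some $\pi'\in\prod_u$ by splitting a single dyadic cube $C$ --- necessarily of level between $j_0$ and $j_1-1$, so $\cW_C$ is available --- into its $2^d$ children, and prove the decomposition $\cF_c\circ\pi=\cF_c\circ\pi'+\langle\cW_C\rangle$ in $L^2(\mu_n)$. The inclusion ``$\supseteq$'' is routine ($\pi$ refines $\pi'$, and the elements of $\cW_C$ vanish off $C$ and are constant on its children). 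For ``$\subseteq$'', given $f\in\cF_c\circ\pi$ I split off its $\mu_n$-mean $\bar f_C$ over $C$: writing $f=(f\,\I\{D_k\setminus C\}+\bar f_C\,\I\{C\})+(f-\bar f_C)\,\I\{C\}$, the first bracket is constant on every element of $\pi'$, hence lies in $\cF_c\circ\pi'$, while the second is children-constant, supported in $C$ and $\mu_n$-balanced over $C$, hence lies in $\langle\cW_C\rangle$; the case $\mu_n(C)=0$ is trivial. Since the wavelets in $\cW_C$ are $\mu_n$-balanced they are orthogonal to all of $\cF_c\circ\pi'$ and so are genuinely new, so adjoining them to the at most $|\pi'|$ basis functions furnished by the induction hypothesis produces a spanning family of size $v\le|\pi'|+\widetilde K_C\le|\pi'|+(2^d-1)=|\pi|$, closing the induction.

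The step I expect to demand the most care is the spanning property of $\{\mu_n(C)^{-1/2}\,\I\{C\}\}\cup\cW_C$ in $L^2(\mu_n)$: one must track, through the Gram--Schmitt pass on $C$, that replacing $\Psi_{\iota(0)},\dots,\Psi_{\iota(2^d-1)}$ (which span the children-constant functions in the ordinary sense) by $\mu_n(C)^{-1/2}\,\I\{C\}$ and the normalized balanced residuals preserves the span modulo $\mu_n$-null sets, and that some residuals may vanish in $L^2(\mu_n)$ --- which is exactly why $\widetilde K_C$ can drop below $2^d-1$ and the conclusion is $v\le|\pi|$ rather than an equality. Everything else --- that each $\prod_u$ consists of dyadic cells of level at most $j_1$ tiling $D_k$, that the split cube $C$ really sits at level $\le j_1-1$, and that $\cF_c\circ\pi\subseteq\cF_k$ --- is structural and follows directly from the definitions of the $\prod_u$ recalled above.
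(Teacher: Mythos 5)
Your proposal is correct and follows essentially the same route as the paper's proof: an induction on $u$ in which a partition $\pi\in\prod_{u+1}$ is obtained from $\pi'\in\prod_u$ by splitting one dyadic cube, and the at most $2^d-1$ balanced wavelets constructed on that cube are adjoined to the spanning family from the induction hypothesis, giving $v\le(2^d-1)u+(2w_k)^d=|\pi|$. The only difference is that you spell out (via splitting off the $\mu_n$-mean over the cube) the span identity that the paper simply asserts ``by construction'' of the empirical Gram--Schmitt wavelets.
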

\begin{proof}
The proof follows with induction on $1\le u \le u_{max} = 1 + (2w_k)^d (2^{(j_1-j_0)d}-1)/(2^d-1)$ and the definition of the set systems $\prod_u$. If $u=1$, then $\prod_1$ only contains the partition
\[
	\pi = \{ \supp{\Phi_{j_0,\gamma}}: \gamma\in A_{j_0,k} \} = \{ 2^{-j_0} [\gamma,\gamma+e_N): \gamma \in A_{j_0,k} \}.
	\]
For the inductive step, $u\rightarrow u+1$, let $\pi \in \prod_{u+1}$ be a partition and $\pi' \in \prod_{u}$ the corresponding predecessor partition which satisfies the relationship
\[
		\pi = \left(\pi' \setminus \{ A \} \right) \cup \{ A_1^{u_1} \times \ldots \times A_d^{u_d} : u_i \in \{L, R\}, i=1,\ldots,d \}.
\]
By construction, in $L^2(\mu_n)$ the following equality is true
\begin{align*}
		\cF_c \circ  \{ A_1^{u_1} \times \ldots \times A_d^{u_d} : u_i \in \{L, R\}, i=1,\ldots,d \} &= \left\langle\, \I\{ A_1^{u_1} \times \ldots \times A_d^{u_d}\} : u_i \in \{L,R\}, i=1,\ldots,d \,\right\rangle \\
		&= \left\langle\, \I\{A\}, f'_{1},\ldots,f'_{2^d - 1} \,\right\rangle,
\end{align*}
where the $f'_i$ are the orthonormal balanced wavelets on $A$ from our construction and $\I\{A\} \in \cF_c \circ \pi'$. By the inductive step, $\cF_c \circ \pi' = \left\langle\, f_{j_1},\ldots,f_{j_v}  \,\right\rangle$ for certain wavelets $f_{j_s}$ from the constructed orthonormal system with $v \le |\pi| = ( 2^d-1)(u-1) + (2 w_k)^d$. Hence, $\cF_c\circ \pi$ can be represented with $(2^d-1)u + (2w_k)^d$ elements as
\[
	\cF_c \circ \pi = \left\langle\, f_{j_1},\ldots,f_{j_v} , f'_1,\ldots, f'_{2^d-1} \,\right\rangle.
	\]
This finishes the proof.
\end{proof}

\begin{proof}[Proof of Theorem~\ref{NonlinRateOfConvExpDecr}] 
Let $(X'(I_{n(k)}), Y'(I_{n(k)}) )$ be an i.i.d.\ ghost sample on an enlarged probability space with the same marginal distributions as the observations $(X(s),Y(s))$ for the given sequence of index sets $I_{n(k)}$ from \eqref{Cond_Sequence} and \eqref{Cond_Sequence2}. Let the truncation sequence be given by $\beta_k \equiv B$. We define the empirical norms for a real valued function $f$ on $\R^d$
\[
	 \norm{f}_k \coloneqq \left(  |I_{n(k)} |^{-1} \sum_{s \in I_{n(k)} } f(X(s))^2 \right)^{1/2} \text{ and } \norm{f}'_k \coloneqq \left(  |I_{n(k)} |^{-1} \sum_{s \in I_{n(k)} } f(X'(s))^2 \right)^{1/2}. 
\]
Additionally, write $\norm{\,\cdot\,}$ for the $L^2(\mu)$-norm: $\norm{f}^2 = \int_{\R^d} f^2 \,\intd{\mu}$. The $L^2$-error decomposes in three terms
\begin{align*}
		\int_{\R^d} | \hat{m}_k-m |^2\,\intd{\mu}		&= \left\{ \norm{ \hat{m}_k - m}^2 - 2\left( (\norm{\hat{m}_k-m}'_k)^2 + pen_k(m_k) \right)		\right\} + 2\left\{	\norm{\hat{m}_k-m}_k^2 + pen_k(m_k)	\right\} \\
		&+ 2 \left\{	(\norm{ \hat{m}_k-m}'_k)^2 - \norm{ \hat{m}_k-m}_k^2	\right\} =: T_{1,k} + T_{2,k} + T_{3,k}.
\end{align*}
We investigate the terms $T_{i,k}$ separately. We start with $T_{1,k}$: note that $\hat{m}_k \in T_B \cF_k$, consequently,
\begin{align}
		&\p\left(	T_{1,k} > t	\right) = \p\left(	\norm{ \hat{m}_k-m}^2 - 2\left(	(\norm{ \hat{m}_k-m}'_k)^2 + pen_k(m_k)	\right)	> t \right) \nonumber \\
		&\le \p\left(	\sup_{f\in \cF_k}	\norm{ T_B f-m}^2 - 2\left( (\norm{T_B f-m}'_k)^2  + pen_k(f) \right) > t \right) \nonumber \\
		\begin{split}\label{NonlinUnb1} 
		&\le \p\Bigg(	\exists f\in T_{B} \cF_k: \E{ (f(X(e_N))-m(X(e_N)))^2 } - \frac{1}{|I_{n(k)}|} \sum_{s\in I_{n(k)}} \left( f(X'(s))-m(X'(s)) \right)^2	 \\
		&\qquad> \frac{1}{2} \left(		t + \E{ (f(X(e_N))-m(X(e_N)))^2 } \right) \Bigg), \end{split}
\end{align}
here we can omit the penalizing term because $pen_k(f) \ge 0$. Apply Lemma \ref{modGyorfiT11.4} to Equation \eqref{NonlinUnb1} with the parameters $\alpha = \beta = t/2$ and $\delta = 1/2$:
\begin{align}
	\eqref{NonlinUnb1} &\le 14\, H_{T_B\cF_k} \left(	\frac{ t/4}{20B} \right) \exp\left( - \frac{ t/2  \, |I_{n(k)}| }{2568\, B^4}		\right) \le C_1\, \exp\left( C_2\, K^* \,\log( B^2/t) - C_3\, t \, |I_{n(k)}| / B^4 \right), \nonumber
\end{align}
where we use that both $\log H_{T_B\cF_k} \left(	\frac{ t/4}{20B} \right) \le C_1 \cV_{(T_B \cF_k )^+ }\log \left(\frac{ C_2 B^2}{t} \right)$ and $ \cV_{(T_B \cF_k)^+} \le\cV_{\cF_k^+} \le K^*+1$. The constants $C_1,C_2, C_3$ do not depend on $I_{n(k)}$, $K^*$, $t$ or $B$. Hence, we choose $v_k \coloneqq K^*\, |I_{n(k)}|^{-1}\, \left(\prod_{i=1}^N \log n_i(k) \right)^2$ and the expectation of the first term can be bounded by
\begin{align}
	\E{ T_{1,k} } &\le v_k +  \int_{v_k}^{\infty} \p\left( T_{1,k} > t \right) \, \intd{t} = \cO\left( K^*  \, \left(\prod_{i=1}^N \log n_i(k) \right)^2 \Bigg/|I_{n(k)}|  \right). \label{NonlinUnb1b}
\end{align}
We study the second term $T_{2,k}$: therefore define the function which minimizes the penalized sum of squares
\[
		m_k^* \coloneqq \argmin{f \in \cF_k}  \left\{ \frac{1}{|I_{n(k)}|} \sum_{s\in I_{n(k)}} \left(	f(X(s))-m(X(s))	\right)^2	+ pen_k(f)	\right\}.
\]
We compute the conditional expectation of $T_{2,k}$ given the data $X(I_{n(k)})$ and use the pointwise inequality $|\hat{m}_k - m| \le |m_k-m|$ which is true because both $\hat{m}_k$ and $m$ are bounded by $B$,
\begin{align}
		&\frac{1}{2} \E{ T_{2,k} \,|\,X(I_{n(k)}) } \nonumber\\
		&\le \E{ \norm{m_k - m}_k^2 + pen_k(m_k) \,|\,X(I_{n(k)}) } \nonumber \\
		& \le \E{ \norm{m_k - m}_k^2 + pen_k(m_k) - 2\left( \norm{m^*_k - m}_k^2 + pen_k(m_k^*) \right) \,\Big|\,X(I_{n(k)}) } \nonumber \\
		&\qquad\qquad  + 2\left( \norm{m^*_k - m}_k^2 + pen_k(m_k^*) \right) \nonumber \\
		\begin{split}
		&\le v_k + \int_{v_k}^{\infty} \p\left(	\norm{m_k - m}_k^2 + pen_k(m_k) > 2\left( \norm{m^*_k-m}_k^2 + pen_k(m_k^*) \right) + t \,\Big|\, X(I_{n(k)})	\right) \, \intd{t} \\
		&\qquad\qquad + 2 \inf_{f\in\cF_k} \left\{ \frac{1}{|I_{n(k)}|} \sum_{s\in I_{n(k)}} \left(	f(X(s))-m(X(s))	\right)^2	+ pen_k(f)	\right\}, \label{NonlinUnb2} \end{split}
\end{align}
for $v_k > 0$ and where we use the continuity properties of a conditional distribution function as well as the defining property of $m_k^*$. Set $V^2(m_k|m_k^*) \coloneqq \norm{m_k-m_k^*}_k^2 + pen_k(m_k)$ and consider the conditional distribution in Equation \eqref{NonlinUnb2}: one can show with elementary calculations, cf. the proof of \cite{vandeGeer2001} Theorem 2.1 that by the definitions of $m_k$ and $m_k^*$ for given data $X(I_{n(k)}) = x(I_{n(k)}) \coloneqq \{ x_s: s\in I_{n(k)}\} \subseteq \R^d$ the inclusion
\begin{align}
	& \left\{ \norm{m_k - m}_k^2 + pen_k(m_k) > 2\left( \norm{m^*_k-m}_k^2 + pen_k(m_k^*) \right) + t \right\} \nonumber \\
	&\qquad\qquad \subseteq \left\{	\frac{1}{|I_{n(k)}|} \sum_{s\in I_{n(k)}} \varsigma(X(s))\, \epsilon(s) \, \Big( \,m_k(x(s))-m_k^*(x(s))\, \Big)\ge V^2(m_k|m_k^*)/12 \text{ and }	V^2(m_k|m_k^*) \ge t \right\} \nonumber
\end{align}
is true. Hence, the conditional distribution from Equation \eqref{NonlinUnb2} can be bounded as
\begin{align}
		&\p\left(\norm{m_k - m}_k^2 + pen_k(m_k) > 2\left( \norm{m^*_k-m}_k^2 + pen_k(m_k^*) \right) + t 	\right)\Bigg|_{X(I_{n(k)}) = x(I_{n(k)})}\nonumber  \\
		&\le \left. \p \left( \frac{1}{|I_{n(k)}|} \sum_{s\in I_{n(k)}} \varsigma(x(s))\, \epsilon(s) \; \Big( \,m_k(x(s))-m_k^*(x(s))\, \Big) \ge V^2(m_k|m_k^*)/12 \text{ and }	V^2(m_k|m_k^*) \ge t \right) \right|_{X(I_{n(k)}) = x(I_{n(k)})} \nonumber \\
		&\le \sum_{l=0}^{\infty}  \p\Bigg(	\frac{1}{|I_{n(k)}|} \sum_{s\in I_{n(k)}} \varsigma(x(s))\, \epsilon(s) \; \Big( \,m_k(x(s))-m_k^*(x(s))\, \Big) \ge \frac{2^{2l} t}{12}	 \nonumber \\
		& \qquad\qquad\qquad\qquad\qquad\qquad\qquad\qquad\qquad  \text{ and }	V^2(m_k|m_k^*) \in \left[2^{2l}t,2^{2(l+1)} t \right)  \Bigg) \Bigg|_{X(I_{n(k)}) = x(I_{n(k)})} \nonumber \\
		\begin{split}\label{NonlinUnb3}
		&\le \sum_{l=0}^{\infty}\p\Bigg(	\exists f\in\cF_k:  \frac{1}{|I_{n(k)}|} \sum_{s\in I_{n(k)}} \varsigma(x(s))\, \epsilon(s) \; \Big(\, f (x(s))-m_k^*(x(s)) \, \Big) \ge  \frac{2^{2l} t}{12}	  \\ 
		& \qquad\qquad\qquad \qquad\qquad\qquad\qquad\qquad\qquad\qquad  \text{ and }	V^2(f|m_k^*) \le 2^{2(l+1)} t \Bigg) \Bigg|_{X(I_{n(k)}) = x(I_{n(k)})}. 
		\end{split}
\end{align}
Thus, it suffices to show that \eqref{NonlinUnb3} can be bounded suitably. Define for $\delta > 0$ the functions classes
\[
	\cG_{k,l}(\delta)\coloneqq T_{\sqrt{ |I_{n(k)} | }\,  2^{l+1} \sqrt{\delta} } \; \left\{ f-m_k^*: f\in \cF_k, \; V^2(f|m_k^*) \le 2^{2(l+1)} \delta \right\}.
\]
The function class $\cG_{k,l}(t)$ corresponds to the functions used in \eqref{NonlinUnb3}. Note that we can truncate the functions at $\pm \sqrt{ |I_{n(k)} | }\,  2^{l+1} \sqrt{t}$ because the admissible $f\in\cF_k$ fulfill $|f(X(s))- m^*_k(X(s))| \le \sqrt{ |I_{n(k)} | }\,  2^{l+1} \sqrt{t}$. Set
\[
		R(n) \coloneqq |I_{n}|^{1/(N+1)} \Bigg/ \prod_{i=1}^N \log n_i \text{ for } n\in \N_+^N.
\]
Now we are able to apply Lemma \ref{BoundMixingNoise} to the probabilities in the sum in \eqref{NonlinUnb3} (with $K\coloneqq2^l \sqrt{t}$) 
\begin{align}
		\eqref{NonlinUnb3} &= \sum_{l=0}^{\infty} \p\left(	\sup_{g\in \cG_{k,l}  (t)} \left\{ \frac{1}{|I_{n(k)}|} \sum_{s\in I_{n(k)}} \varsigma(x(s))\, \epsilon(s) g(x_s) >  \frac{2^{2l} t}{12}	 \right\}  \right) \label{NonlinUnd4} \\
		\begin{split}\label{NonlinUnb5}
		&\le \sum_{l=0}^{\infty}  \inf_{D_1 > 0  } H_{\cG_{k,l}(t)} \left(\frac{ (2^{2l}\,t/ 12)^2}{ 8\cdot 2^{2l} t\; |I_{n(k)}|^{1/2} \; 2^{l+1} \sqrt{t} }\right) \\
		&\qquad \qquad \cdot \left\{	C_1 D_1^{1-\tau} (2^{2l}\, t)^{-1} \exp\left(- C_2 D_1^{\tau} \right)	+ C_1 \exp\left(- C_2\frac{2^{2l}t}{ D_1}  \,R(n(k)) \right) \right\}  \\
		&\qquad + \sum_{l=0}^{\infty}  \inf_{D_2 > 0} \left\{ C_1  D_2^{1-\tau/2}\, (2^{2l}\,t)^{-1}  \exp\left(- C_2 D_2^{\tau/2} \right)	+ C_1 \exp\left(- C_2\frac{2^{2l}t}{ D_2} R(n(k)) \right)  \right\},
		\end{split}
\end{align}
the constants $C_1$ and $C_2$ only depend on the lattice dimension $N$, the bound on the mixing coefficients and the tail parameters $\kappa_0,\kappa_1,\tau$. The covering number of this function classes $\cG_{k,l}(\delta)$ can be bounded with help of the Vapnik-Chervonenkis dimension of $\cF_k$
\[
		H_{\cG_{k,l}(\delta)}  \left(\frac{ (2^{2l}\, \delta / 12)^2}{ 8\cdot 2^{2l} \delta \, |I_{n(k)}|^{1/2} 2^{l+1} \sqrt{\delta } }\right) = H_{\cG_{k,l}(\delta)} \left( \frac{ 2^l \sqrt{\delta } } {2304 |I_{n(k)}|^{1/2} } \right) \le  (C\, |I_{n(k)}| )^{2 \cV_{\cG_k^+(\delta)}} \le (C \,  |I_{n(k)} |)^{ 2 (K^*+2)}
\]
because $\cG_{k,l}(\delta) \subseteq T_{ \sqrt{ |I_{n(k)}|} 2^{l+1} \sqrt{\delta} } \left\langle \cF_k,m_k^* \right\rangle$ and the $\R$-linear space  $\left\langle \cF_k,m_k^* \right\rangle$ has a vector space dimension of at most $K^*+1$; the bound can then be deduced from Proposition~\ref{boundCoveringNumber}.\\
Note that Equation \eqref{NonlinUnb5} is summable over the index $l$ for all $D_1,D_2 \in \R_+$ which are independent of $l$. We have again for suitable constants (which only depend on the lattice dimension $N$, the bound on the mixing coefficients and the tail parameters)
\begin{align}\begin{split}\label{NonlinUnb6}
		\eqref{NonlinUnb5} &\le (C_1 \, |I_{n(k)}| )^{ 2 (K^*+2)} \;\cdot\; \inf_{D_1 > 0  } \left\{ D_1^{1-\tau} t^{-1} \exp\left(- C_2 D_1^{\tau} \right)	+  \exp\left(- C_2 \frac{ t \,R(n(k)) } {D_1} \right) \right\}  \\
		&\qquad + C_1\;\cdot\; \inf_{D_2 > 0} \left\{   D_2^{1-\tau/2}\, t^{-1} \exp\left(- C_2 D_2^{\tau/2} \right)	+\exp\left(- C_2 \frac{ t \, R(n(k)) } {D_2} \right)  \right\}. 
		\end{split}\end{align}
Set the parameter $D_i$ for each $t$ such that the asymptotic growth rate of the two exponential terms are equal inside each factor of curly brackets of \eqref{NonlinUnb6}, i.e.
\[
		D_1 \coloneqq t^{1/(1+\tau)}\, \left(	|I_{n(k)}|^{1/(N+1)} \Big/ \prod_{i=1}^N \log n_i(k) \right)^{1/(\tau+1)} \text{ and } D_2 \coloneqq D_1^{(1+\tau)/(1+\tau/2) }.
\]
In particular, we find
\begin{align}
		\int_{v_k}^{\infty} \exp \left( - C_2 \, t^{\tau/(1+\tau)} \, R(n(k))^{\tau/(1+\tau)} \right) \,\intd{t} \le  C\,  \frac{(1+\tau)}{\tau} \frac{ v_k^{1/(1+\tau)} \exp\left( - C_2 \left(v_k\,R(n(k)) \right)^{\tau/(1+\tau)}  \right) }{ R(n(k))^{\tau/(1+\tau)} } . \label{NonlinUnb7}
\end{align}
In addition, we have $D_1^{1-\tau} t^{-1} = t^{-2\tau/(1+\tau)}\, R(n(k))^{(1-\tau)/(1+\tau)}$, hence, this factor is decreasing in $t$. Define
\[
		v_k \coloneqq \left( K^* \left( \prod_{i=1}^N \log n_i(k) \right)^2 \right)^{(1+\tau)/\tau} \Bigg/ R(n(k)).
\]
If we combine \eqref{NonlinUnb6} with \eqref{NonlinUnb7}, we find that the integral from $v_k$ to $\infty$ over the integrand in the first line of \eqref{NonlinUnb6} decreases at a speed which is asymptotically in
\[
	\cO\left( (K^* (\prod_{i=1}^{N} \log n_i(k))^2 )^{(1+\tau)/\tau} / R(n(k)) \right).
	\]
In the same way, by formally replacing $\tau$ with $\tau/2$, one finds that the integral over the integrand in the second line in \eqref{NonlinUnb6} is in $\cO\left( (K^* (\prod_{i=1}^{N} \log n_i(k))^2 )^{(2+\tau)/\tau} / R(n(k)) \right)$. With this reduction, we can estimate the integral in Equation \eqref{NonlinUnb2} as
\begin{align*}
		 &\int_{v_k}^{\infty} \p\left(	\norm{m_k - m}_k^2 + pen(m_k) > 2\left( \norm{m^*_k-m}_k^2 + pen(m_k^*) \right) + t \,\Big|\, X(I_{n(k)})	\right) \, \intd{t}  \\
		&\le C \frac{ (K^* (\prod_{i=1}^{N} \log n_i(k))^2 )^{(2+\tau)/\tau} }{R(n(k)) },
\end{align*}
where the constant $C$ only depends on the lattice dimension $N$, the bound on the mixing coefficients and the tail parameters. Hence, the expectation of $T_{2,k}$ is bounded by
\begin{align}
	&\E{ T_{2,k} } \nonumber\\
	&\le 4 \E{ \inf_{f\in\cF_k} \left\{ \frac{1}{|I_{n(k)}|} \sum_{s\in I_{n(k)}} \left|	f(X(s))-m(X(s))	\right|^2	+ pen_k(f)	\right\} } + C\, \frac{ (K^* (\prod_{i=1}^{N} \log n_i(k))^2 )^{(2+\tau)/\tau} }{R(n(k)) } \nonumber \\
	&\le 4 \inf_{f \in \cF_k } \left\{ \int_{\R^d} |f-m|^2\, \intd{\mu} + \E{ pen_k(f) } \right\} + C\, \frac{ (K^* (\prod_{i=1}^{N} \log n_i(k))^2 )^{(2+\tau)/\tau} }{R(n(k)) } \label{NonlinUnb10}\\
	&\le 4 \min_{1\le u \le K^*} \left\{	\inf_{\substack{ f\in \cF_k,\\ f = \sum_{i=1}^u a_i g_i }} \int_{\R^d} |f-m|^2\, \intd{\mu} + u \lambda_k^2	\right\} + C\, \frac{ (K^* (\prod_{i=1}^{N} \log n_i(k))^2 )^{(2+\tau)/\tau} }{R(n(k)) }. \nonumber
\end{align}
Especially in the case of the wavelet system we can bound \eqref{NonlinUnb10} slightly better if we use Lemma~\ref{RepresentationOfConstantFunctions}:
\begin{align}
		\eqref{NonlinUnb10} \le 4 \min_{1\le u \le u_{max} } & \left\{	\lambda_k^2 \left( (2^d-1)(u-1) + (2w_k)^d \right) 	+ \min_{\pi \in \prod_u} \inf_{f \in \cF_c \circ \pi } \int_{\R^d} |f-m|^2\,\intd{\mu} \right\} \nonumber \\
		&\quad + C\, \left(  (2\cdot 2^{j_1-j_0} w_k)^d \left(\prod_{i=1}^{N} \log n_i(k) \right)^2 \right)^{(2+\tau)/\tau} \Bigg/ R(n(k)), \nonumber
\end{align}
where $u_{max} = 1+ (2w_k)^d [ (2^{d(j_1-j_0)}-1)/(2^d-1) ]$ is the maximum index of the sets of partitions given in Equation~\eqref{uMax}. We consider the third term. Define the function class	$\cG_k \coloneqq \left\{g_f \coloneqq (f-m )^2 : f\in T_B \cF_k		\right\}$. Let $f_1,\ldots,f_v$ be an $\tilde{\epsilon}$-cover of $T_B \cF_k$ w.r.t.\ the $L^1$-norm of the empirical measure of the points $(x_1,\ldots,x_u) \subseteq \R^d$. As both $m$ and the functions $f$ in $T_B \cF_k$ are bounded by $B$, we have that the functions in $\cG_k$ are bounded by $4B^2$. Furthermore, the functions $g_{f_i}(x) \coloneqq (f_i(x) -m(x) )^2 $  ($i=1,\ldots,v$) are a $4B\tilde{\epsilon}$-cover of $\cG_k$ w.r.t.\ the $L^1$-norm of the empirical measure induced by $x_1,\ldots,x_u \subseteq \R^{d}$. Indeed, let $f\in T_B \cF_k$ be in the neighborhood of $f_j$ and denote by $g_f$ resp. $g_{f_j}$ the corresponding functions, then 
\begin{align}
   \frac{1}{u} \sum_{i=1}^u  \left|\, g_f(x_i) - g_{f_j}(x_i) \,\right| &= \frac{1}{u} \sum_{i=1}^u \left|\, (f(x_i)-m(x_i)) ^2 - (f_j(x_i) -  m(x_i) )^2 \,\right| \nonumber \\
	&\le \frac{4B}{u} \sum_{i=1}^u \left|\, f(x_i)-f_j(x_i) \,\right| \le 4B\,\tilde{\epsilon}.\nonumber
\end{align}
Consequently, $H_{\cG_k}(t/4) \le H_{T_B \cF_k}(t/(16 B))$ and with Lemma \ref{ConvDistDepAndInd}, we obtain for the distribution of $T_{3,k}$ the following inequalities
\begin{align}
		&\p\left( 	(\norm{ \hat{m}_k-m}'_k)^2 - \norm{ \hat{m}_k-m}_k^2 > t	\right) \nonumber \\
		&\le \p\left(	\sup_{f\in T_B \cF_k} \left| \frac{1}{|I_{n(k)}|} \sum_{s\in I_{n(k)}} (f(X'(s))-m(X'(s)))^2 - \frac{1}{|I_{n(k)}|} \sum_{s\in I_{n(k)}} (f(X(s))-m(X(s)))^2	\right| > t \right) \nonumber \\
		&\le \cH_{ T_B \cF_k }\left( \frac{t}{16 B }	\right)\, \sup_{j} \p\Biggl(	\Biggl| \frac{1}{|I_{n(k)}|} \sum_{s\in I_{n(k)}} (f_j(X'(s))-m(X'(s)))^2 \nonumber \\
		& \qquad\qquad \qquad\qquad \qquad\qquad - \frac{1}{|I_{n(k)}|} \sum_{s\in I_{n(k)}} (f_j(X(s))-m(X(s)))^2	\Biggl| > \frac{t}{2}	\Biggl) \nonumber \\
		&\le C_1 \,\exp\left( C_2 K^* \,\log(B^2/t) - C_3 R(n(k))\, t / B^2 \right),
\end{align}
for suitable constants $C_1, C_2, C_3\in\R_+$ which only depend on the lattice dimension $N$, the bound on the mixing coefficients. Here, we use $\cV_{T_B \cF_k^+} \le K^* + 1$. Hence, the expectation of the first term is bounded as
\begin{align*}
	\frac{1}{2}	\E{ T_{3,k} } &\le v_k + C_1 \, \exp\left( C_2\, K^* \log (1/v_k) \right)\; \int_{v_k}^{\infty} \exp\left( -C_3 t R(n(k)) \right)\,\intd{t} \\
	&= \cO\left( K^*\, \left(\prod_{i=1}^N \log n_i(k) \right)^2 \Bigg/ R(n(k)) \right).
\end{align*}
All in all, $T_{1,k}$ and $T_{3,k}$ are both negligible and the asymptotic properties are determined by $T_{2,k}$.
\end{proof}

\begin{proof}[Proof of Theorem~\ref{RateConvNonlinThrm}]
The proof can be carried out in the same way as the proof of Theorem \ref{NonlinRateOfConvExpDecr}. The bounds on the terms $T_{1,k}$ and $T_{3,k}$ do not change, both terms are in $\cO\left( K^*\, \left(\prod_{i=1}^N \log n_i(k) \right)^2 \big/ R(n(k)) \right)$. The second term can be treated in the same way until Equation \eqref{NonlinUnd4}. Here use Theorem \ref{USLLNM} to obtain constants
\begin{align*}
		\eqref{NonlinUnb4} &\le \sum_{l=0}^{\infty} C_1 \left( \frac{ C_2 \sqrt{|I_{n(k)}| } \sqrt{t} 2^{l+1} }{2^{2l} t } 	\right)^{2( K^* +2)} \, \exp\left( - C_3 2^{2l} \,t \,R(n(k))	\right) \\
		&\le C_1 \left(\frac{ C_2 \sqrt{ |I_{n(k)}| }}{\sqrt{t}} \right)^{2(K^*+2)} \exp\left(- C_3 t R(n(k)) \right).
\end{align*}
With this bound it is straightforward to show
\begin{align*}
	\E{ T_{2,k} } &\le  4 \inf_{f \in \cF_k } \left\{ \int_{\R^d} |f-m|^2\, \intd{\mu} + \E{ pen_k(f) } \right\}  + C\, \frac{ K^* \left( \prod_{i=1}^N \log n_i(k) \right)^2 }{ R(n(k)) }
\end{align*}
and we are back in Equation \eqref{NonlinUnb10}. In this case, the constant $C$ only depends on the lattice dimension $N$ and the bound on the mixing coefficients. This finishes the proof.
\end{proof}

\section*{Supplementary Material}
A supplement \cite{krebsOrthogonalSuppl} gives further technical results for the simulation procedure which is used in Section~\ref{Section_SimulationExamples}.

\section*{Acknowledgments}
The author is indebted to an Associate Editor and a referee for thoughtful suggestions and comments which significantly clarified and improved the manuscript.

\appendix

\section{Exponential inequalities for dependent sums}\label{Appendix_ExpInequalities}

We start with a definition of the covering number:

\begin{definition}[$\epsilon$-covering number]
Let $\left( \R^d,\cB(\R^d) \right)$ be endowed with a probability measure $\nu$ and let $\cG$ be a set of real valued Borel functions on $\R^d$ and let $\epsilon>0$. Every finite collection $g_1,\ldots,g_N$ of Borel functions on $\R^d$ is called an $\epsilon$-cover of $\cG$ w.r.t. $\norm{\,\cdot\,}_{L^p(\nu)}$ of size $N$ if for each $g\in\cG$ there is a $j$, $1\le j\le N$, such that $\norm{g-g_j}_{L^p(\nu)} < \epsilon$. The $\epsilon$-covering number of $\cG$ w.r.t. $\norm{\,\cdot\,}_{L^p(\nu)}$ is defined as
\[
		\mathsf{N}\left( \epsilon, \cG, \norm{\,\cdot\,}_{L^p(\nu)} \right) := \inf\left\{ N\in \N: \exists\, \epsilon-\text{cover of }\cG\text{ w.r.t. }\norm{\,\cdot\,}_{L^p(\nu)} \text{ of size }N \right\}.
\]
Evidently, the covering number is monotone: $\mathsf{N}\left(\epsilon_2, \cG, \norm{\,\cdot\,}_{L^p(\nu)} \right) \le \mathsf{N}\left(\epsilon_1, \cG, \norm{\,\cdot\,}_{L^p(\nu)}\right)$ if $\epsilon_1 \le \epsilon_2$.
\end{definition}

The covering number can be bounded uniformly over all probability measures for a class of bounded functions under mild regularity conditions. Thus, the following covering condition is appropriate for many function classes $\cG$.

\begin{condition}[Covering condition]\label{coveringCondition}
$\cG$ is a class of uniformly bounded, measurable functions $f:\R^d \rightarrow \R$ such that $\norm{f}_{\infty} \le B < \infty$ and for all $\epsilon > 0$ and all $N\ge 1$ the following is true:
\par
\begingroup
\leftskip=1cm
\rightskip=1cm % Parameter anpassen
\noindent
For any choice $z_1,\ldots,z_M \in \R^d$ the $\epsilon$-covering number of $\cG$ w.r.t.\ the $L^1$-norm of the discrete measure with point masses $\frac{1}{M}$ in $z_1,\ldots,z_M$ is bounded by a deterministic function depending only on $\epsilon$ and $\cG$, which we shall denote by $H_{\cG} (\epsilon)$, i.e., $\mathsf{N}\left(\epsilon,\cG, \norm{\,\cdot\,}_{L^1(\nu)} \right) \le H_{\cG} (\epsilon)$., where $\nu = \frac{1}{M} \sum_{k=1}^M \delta_{z_k}$.
\par
\endgroup
\end{condition}

Denote by $\cG^+ := \Big\{ \big\{ (z,t)\in \R^d\times\R: t\le g(z) \big\} : g\in \cG \Big\}$ the class of all subgraphs of the class $\cG$. Condition \ref{coveringCondition} is satisfied if the Vapnik-Chervonenkis dimension of $\cG^+$ is at least two, i.e., $\cV_{\cG^+}\ge 2$ and if $\epsilon$ sufficiently small:

\begin{proposition}[Bound on the covering number, \cite{haussler1992decision}]\label{boundCoveringNumber}
Let $[a,b] \subset \R$ be a finite interval. Let $\cG$ be a class of uniformly bounded real valued functions $g: \R^d \mapsto [a,b]$ such that $\cV_{\cG^+} \ge 2$. Let $0 < \epsilon < (b-a)/4$. Then for any probability measure $\nu$ on $\cB(\R^d)$
\begin{align*}
		\mathsf{N}\left( \epsilon, \cG, \norm{\,\cdot\,}_{L^p(\nu)} \right) \le 3 \left( \frac{2e (b-a)^p}{\epsilon^p} \,\log\frac{3e(b-a)^p}{\epsilon^p} \right)^{\cV_{\cG^+}}.
\end{align*}
In particular, in the case that $\cG$ is an $r$-dimensional linear space, we have $\cV_{\cG^+} \le r+1$.
\end{proposition}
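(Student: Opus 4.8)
The plan is to follow Haussler's classical argument: reduce the general $L^{p}$ statement to a bound on $L^{1}(\nu)$ covering numbers of a uniformly bounded class of finite pseudo-dimension, and then read off the linear-space estimate from the fact that the super-level sets of a finite-dimensional space of functions form a Vapnik--Chervonenkis class.

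First I would normalise. The affine map $g\mapsto(g-a)/(b-a)$ carries $\cG$ onto a class $\widetilde{\cG}$ of $[0,1]$-valued functions; it only translates and vertically rescales subgraphs, hence preserves $\cV_{\cG^{+}}$, and it multiplies every $L^{p}(\nu)$-distance by $b-a$. So, writing $\delta\coloneqq\epsilon/(b-a)\in(0,1/4)$, it suffices to prove $\mathsf{N}\bigl(\delta,\widetilde{\cG},\norm{\,\cdot\,}_{L^{p}(\nu)}\bigr)\le 3\bigl((2e/\delta^{p})\log(3e/\delta^{p})\bigr)^{\cV_{\cG^{+}}}$. Since $|s-t|^{p}\le|s-t|$ for $s,t\in[0,1]$, we have $\norm{f-h}_{L^{p}(\nu)}^{p}\le\norm{f-h}_{L^{1}(\nu)}$, so a $\delta^{p}$-cover in $L^{1}(\nu)$ is a $\delta$-cover in $L^{p}(\nu)$, i.e.\ $\mathsf{N}\bigl(\delta,\widetilde{\cG},\norm{\,\cdot\,}_{L^{p}(\nu)}\bigr)\le\mathsf{N}\bigl(\delta^{p},\widetilde{\cG},\norm{\,\cdot\,}_{L^{1}(\nu)}\bigr)$. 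With $u\coloneqq\delta^{p}\in(0,1/4)$ the task reduces to the case $p=1$, namely $\mathsf{N}\bigl(u,\widetilde{\cG},\norm{\,\cdot\,}_{L^{1}(\nu)}\bigr)\le 3\bigl((2e/u)\log(3e/u)\bigr)^{\cV_{\cG^{+}}}$; note that the hypothesis $\epsilon<(b-a)/4$ is exactly what yields $u<1/4$, which is needed so that the logarithmic factor is positive and the lower-order terms can be absorbed.

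For the $L^{1}$ bound I would pass to the induced class of sets. By the layer-cake identity, for $f,h$ with values in $[0,1]$, $\norm{f-h}_{L^{1}(\nu)}=\int_{0}^{1}\nu\bigl(\{f\ge t\}\symdiff\{h\ge t\}\bigr)\intd t=\widetilde{\nu}\bigl(\cG_{f}^{+}\symdiff\cG_{h}^{+}\bigr)$, where $\widetilde{\nu}$ is the product probability measure $\nu\otimes\lambda_{[0,1]}$ on $\R^{d}\times[0,1]$ ($\lambda_{[0,1]}$ being Lebesgue measure on $[0,1]$) and $\cG_{g}^{+}$ denotes the subgraph of $g$ restricted to $\R^{d}\times[0,1]$. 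Hence an $L^{1}(\nu)$-cover of $\widetilde{\cG}$ is obtained from an $L^{1}(\widetilde{\nu})$-cover of the subgraph class $\cG^{+}$, which is a VC class of sets of dimension $V=\cV_{\cG^{+}}$, and it remains to invoke the standard fact that a VC class of sets of dimension $V$ has, for every probability measure and every $u\in(0,1/4)$, an $L^{1}$ covering number at most $3\bigl((2e/u)\log(3e/u)\bigr)^{V}$. I would recall its proof: bound the covering number by the $u$-packing number $M$; for a maximal $u$-separated family $A_{1},\dots,A_{M}$, draw i.i.d.\ points from $\widetilde{\nu}$ and use a Chernoff bound together with a union bound over the $\binom{M}{2}$ pairs to find a sample size $n$ for which the traces of the $A_{i}$ remain pairwise distinct with positive probability; Sauer's lemma then bounds the number of such traces by $(en/V)^{V}$, and optimising $n$ (using $V\ge2$ and $u<1/4$) produces the stated form. \textbf{This optimisation is the crux of the argument}: the naive requirement that all $\binom{M}{2}$ empirical distances stay large forces $n$ too large and loses a factor in the exponent, so extracting the shape $u^{-V}(\log u^{-1})^{V}$ with the precise constants $2e$, $3e$ and the leading factor $3$ needs Haussler's sharper one-point-at-a-time packing argument (equivalently, the detailed computation behind Theorem~9.3 in the monograph of Gy\"orfi, Kohler, Krzy\.zak and Walk).

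Finally, the linear-space assertion is elementary. The family $\bigl\{\{x:f(x)\ge0\}:f\in\cL\bigr\}$ of super-level sets of an $m$-dimensional real vector space $\cL$ of functions has VC dimension at most $m$: if $m+1$ points $x_{1},\dots,x_{m+1}$ were shattered, the linear evaluation map $f\mapsto(f(x_{1}),\dots,f(x_{m+1}))$ would have rank $\le m$, so some nonzero $(a_{j})$ satisfies $\sum_{j}a_{j}f(x_{j})=0$ for all $f\in\cL$; splitting the indices according to $\operatorname{sign}(a_{j})$ then contradicts the realisability of the corresponding sign pattern. Applying this with $\cL$ the $(r+1)$-dimensional space spanned by $g_{1},\dots,g_{r}$ and the coordinate function $t$ — since each subgraph $\cG_{g}^{+}=\{(z,t):g(z)-t\ge0\}$ is a super-level set of this space — yields $\cV_{\cG^{+}}\le r+1$, completing the plan.
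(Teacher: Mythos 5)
The paper gives no proof of Proposition~\ref{boundCoveringNumber} at all: it is imported verbatim from \cite{haussler1992decision} (see also Theorem~9.4 in \cite{gyorfi}), and your reconstruction — normalisation to $[0,1]$-valued functions, the $L^p$-to-$L^1$ reduction, the layer-cake passage from functions to the subgraph class $\cG^+$ viewed as a VC class of sets under $\nu\otimes\lambda_{[0,1]}$, the packing-plus-Sauer argument for the set bound, and Dudley's linear-dependence argument giving $\cV_{\cG^+}\le r+1$ — is precisely the standard argument behind that citation and is correct. One minor inaccuracy: the naive union bound over the $\binom{M}{2}$ pairs of a maximal $u$-packing (requiring only that traces stay distinct, not that empirical distances stay large) already produces the stated shape $u^{-V}(\log u^{-1})^{V}$ with constants of the quoted form — Haussler's sharper one-point-at-a-time argument is what removes the logarithm entirely — but since you defer the exact constants to the cited computation this does not affect the validity of your plan.
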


The Bernstein inequality from \cite{frankeBernstein} from Theorem~\ref{frankeBernstein} puts us in position to formulate the inequality which yields upper bounds on probability of the event of the type
\begin{align}
		\left \{ \sup_{g\in\cG} \left|	\frac{1}{|I_n|} \sum_{s \in I_n} g(Z(s)) - \E{g(Z(e_N) )} \right| > \epsilon \right  \}. \label{supMeas}
\end{align}
Of course, \eqref{supMeas} is not an event for general function classes, however, we assume that the function classes in the present context are sufficiently regular such that \eqref{supMeas} is $\cA$-measurable.

\begin{theorem}[Bernstein inequality for spatial lattice processes]\label{frankeBernstein}
Let $Z:=\{ Z(s): s \in \Z^N \}$ be a real-valued random field defined on $\Z^N$. Let $Z$ be strong mixing with mixing coefficients $\{\alpha(k): k\in \N_+\}$ such that each $Z(s)$ is bounded by a uniform constant $B$ and has expectation zero and the variance of $Z(s)$ is uniformly bounded by $\sigma^2$. Furthermore, put $\bar{\alpha}_k := \sum_{u=1}^k u^{N-1} \alpha(u)$. Let $P(n), Q(n)$ be non-decreasing sequences in $\N_+^N$ which are indexed by $n\in\N_+^N$ and which satisfy for each $1\le i \le N$
\begin{align*}
		&1 \le Q_i(n_i) \le P_i(n_i) < Q_i (n_i) + P_i(n_i) < n_i.
		\end{align*}
Furthermore, let $\tilde{n} := |I_n| = n_1\cdot \ldots \cdot n_N$, $\tilde{P} := P_1(n_1) \cdot \ldots \cdot P_N (n_N) $ and $\underline{q} := \min\left\{ Q_1(n_1),\ldots,Q_N(n_N) \right\}$ as well as $\overline{p} := \max\left\{ P_1(n_1), \ldots, P_N(n_N) \right\}$. Then for all $\epsilon > 0$ and $\beta >0$ such that $2^{N+1} B \tilde{P} e \beta < 1$
\begin{align}\begin{split}\label{frankeBernsteinEq0}
		\p\left( \left|\sum_{s \in I_n} Z(s) \right| > \epsilon \right) &\le 2 \exp\left\{ 12 \sqrt{e} 2^N \frac{\tilde{n}}{\tilde{P}} \alpha(\underline{q})^{ \tilde{P} \big/ \left[ \tilde{n} \left(2^N+1\right) \right] }	\right\} \\
		&\qquad\qquad\qquad \cdot  \exp\left\{	-\beta \epsilon + 2^{3N} \beta^2 e \left(	\sigma^2 + 12 B^2 \gamma \bar{\alpha}_{\overline{p}} \right) \tilde{n}	\right\},
\end{split}\end{align}
where $\gamma$ is a constant which depends on the lattice dimension $N$.
\end{theorem}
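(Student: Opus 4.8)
This is the spatial Bernstein inequality of \cite{frankeBernstein}, and the plan is to reprove it by Bernstein's classical big-block/small-block device, executed simultaneously in all $N$ coordinate directions. Abbreviate $P_i:=P_i(n_i)$, $Q_i:=Q_i(n_i)$. First I would cut $\{1,\dots,n_i\}$ into consecutive segments of length $P_i+Q_i$ (the remainder is dealt with by a minor adjustment of the last segment), and inside each segment call the first $P_i$ indices the \emph{big part} and the last $Q_i$ the \emph{small part}. Taking Cartesian products, $I_n$ is tiled by $M:=\prod_{i=1}^N\lceil n_i/(P_i+Q_i)\rceil\le 2^N\tilde{n}/\tilde{P}$ cells, and each cell splits into $2^N$ sub-blocks, one for every choice of ``big part or small part'' in each of the $N$ directions; index these choices by $v$, let $S_1^{(v)},\dots,S_M^{(v)}$ be the sums of $Z$ over the $M$ type-$v$ sub-blocks, and set $S_v:=\sum_{l=1}^M S_l^{(v)}$, so that $\sum_{s\in I_n}Z(s)=\sum_v S_v$ and $|S_l^{(v)}|\le B\tilde{P}$. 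The decisive geometric point is that, for a fixed $v$, two distinct type-$v$ sub-blocks differ in at least one coordinate, and in that coordinate the gap between them is either some $Q_i$ or some $P_i$, hence at least $\underline{q}$; so the $\sigma$-algebras generated by $S_1^{(v)},\dots,S_M^{(v)}$ are pairwise separated in $d_\infty$ by at least $\underline{q}$.

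Next I would combine a Chernoff bound with the generalized Hölder inequality over the $2^N$ types: for $\beta>0$,
\[
\p\Big(\sum_{s\in I_n}Z(s)>\epsilon\Big)\le e^{-\beta\epsilon}\,\E{\prod_{v}e^{\beta S_v}}\le e^{-\beta\epsilon}\prod_{v}\big(\E{e^{2^N\beta S_v}}\big)^{2^{-N}},
\]
and the same for the lower tail, which gives the outer factor $2$ in \eqref{frankeBernsteinEq0}. It therefore suffices to bound $\E{e^{2^N\beta S_v}}=\E{\prod_{l}e^{2^N\beta S_l^{(v)}}}$ for a single $v$, and the hypothesis $2^{N+1}B\tilde{P}e\beta<1$ ensures $|2^N\beta S_l^{(v)}|$ is at most a small absolute constant, so each $e^{2^N\beta S_l^{(v)}}$ is a bounded random variable.

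The one-type bound splits into decoupling and a single-block moment estimate. For the decoupling, since $S_1^{(v)},\dots,S_M^{(v)}$ are bounded and generated by pairwise $\underline{q}$-separated $\sigma$-algebras, an $\alpha$-mixing coupling argument of Volkonskii--Rozanov/Bradley type, iterated over the $M$ blocks, replaces $\E{\prod_l e^{2^N\beta S_l^{(v)}}}$ by $\prod_l\E{e^{2^N\beta S_l^{(v)}}}$ at a multiplicative cost $\exp\{12\sqrt{e}\,2^N(\tilde{n}/\tilde{P})\,\alpha(\underline{q})^{\tilde{P}/[\tilde{n}(2^N+1)]}\}$; optimizing a free Hölder exponent in this step against the crude sup-bound on the blocks is exactly what turns the naive power $\alpha(\underline{q})^1$ into the fractional power in \eqref{frankeBernsteinEq0}. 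For the single-block estimate, $e^x\le 1+x+x^2$ together with $\E{S_l^{(v)}}=0$ gives $\E{e^{2^N\beta S_l^{(v)}}}\le\exp\{c\,2^{2N}\beta^2\,\mathrm{Var}(S_l^{(v)})\}$ for a dimensional constant $c$, and the $\alpha$-mixing covariance bound $|\mathrm{Cov}(Z(s),Z(t))|\le 4B^2\alpha(d_\infty(s,t))$ combined with the lattice-sphere count $\#\{t:d_\infty(s,t)=u\}\le\gamma u^{N-1}$ shows that $\mathrm{Var}(S_l^{(v)})$ is at most $(\sigma^2+12B^2\gamma\,\bar{\alpha}_{\overline{p}})$ times the number of sites of the $l$-th type-$v$ sub-block, since that sub-block has $d_\infty$-diameter at most $\overline{p}$. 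As the type-$v$ sub-blocks are pairwise disjoint subsets of $I_n$, summing over $l$ yields $\prod_l\E{e^{2^N\beta S_l^{(v)}}}\le\exp\{c\,2^{2N}\beta^2\,\tilde{n}(\sigma^2+12B^2\gamma\,\bar{\alpha}_{\overline{p}})\}$. Multiplying the decoupling cost and this bound, raising to the power $2^{-N}$, and taking the product over the $2^N$ types reproduces \eqref{frankeBernsteinEq0}, with the remaining dimensional slack absorbed into the stated constant $2^{3N}$.

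The hard part will be the decoupling: one wants a cost linear in the number of blocks $\tilde{n}/\tilde{P}$ while the dependence enters only through a genuine power of $\alpha(\underline{q})$, and balancing these two demands forces the exponent of $\alpha(\underline{q})$ to be of order $\tilde{P}/\tilde{n}$. This is precisely where the trade-off between block volume $\tilde{P}$, sample size $\tilde{n}$, and separation $\underline{q}$ is locked in, and it is the reason the inequality ends up reflecting only an ``effective'' rather than the nominal sample size --- the source of the sub-optimal convergence rates discussed in Section~\ref{Section_NonlinRegWavelets}. The remaining ingredients --- the $N$-dimensional bookkeeping of the $2^N$ sub-block types, the treatment of the non-divisible remainder so that it does not spoil the $\underline{q}$-separation, and the covariance/variance estimate --- are routine.
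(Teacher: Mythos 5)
First, a point of comparison: the paper does not actually prove Theorem~\ref{frankeBernstein} --- its ``proof'' consists of the single line that a proof can be found in \cite{frankeBernstein}. So your proposal can only be measured against the strategy of that reference, and your plan is indeed the expected one: a simultaneous big-block/small-block decomposition in all $N$ coordinate directions, a Chernoff bound combined with the generalized H\"older inequality over the $2^N$ block types (which also produces the outer factor $2$ from the two tails), an $\alpha$-mixing decoupling of the same-type blocks (correctly observed to be $d_\infty$-separated by at least $\underline{q}$), and a one-block moment bound via $e^x\le 1+x+x^2$ together with $\E{S_l^{(v)}}=0$, the covariance bound $|\operatorname{Cov}(Z(s),Z(t))|\le 4B^2\alpha(d_\infty(s,t))$ and the sphere count $\gamma u^{N-1}$, which yields the factor $\sigma^2+cB^2\gamma\bar\alpha_{\overline{p}}$ per site. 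That bookkeeping, including the role of the smallness condition $2^{N+1}B\tilde{P}e\beta<1$, is sound.

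The genuine gap is the decoupling step, which you assert rather than prove, and it is the only hard part of \eqref{frankeBernsteinEq0}. Iterating the elementary covariance inequality for bounded variables over the $M\asymp\tilde{n}/\tilde{P}$ same-type blocks gives an \emph{additive} error of order $M\,\alpha(\underline{q})\,e^{c\,2^N\beta B\tilde{P}M}$ --- the first power of $\alpha(\underline{q})$ and a factor exponential in $M$ --- which is neither of the form $\exp\bigl\{c\,(\tilde{n}/\tilde{P})\,\alpha(\underline{q})^{\tilde{P}/[\tilde{n}(2^N+1)]}\bigr\}$ nor in general small. Converting it into a \emph{relative} error of size $1+O\bigl(\alpha(\underline{q})^{c\tilde{P}/\tilde{n}}\bigr)$ per step, so that the accumulated cost is the stated exponential, requires a Davydov-type covariance inequality with $L^p$/$L^q$ norms and an iteration-compatible balancing of the H\"older exponents against the moment generating functions of the remaining blocks; this balancing is precisely the content of the cited result, and saying that ``optimizing a free H\"older exponent turns the naive power into the fractional power'' presupposes the answer (including the specific exponent $\tilde{P}/[\tilde{n}(2^N+1)]$ and the linear-in-$\tilde{n}/\tilde{P}$ prefactor) instead of deriving it. Until that lemma is carried out, what you have is the correct proof plan --- matching the route of \cite{frankeBernstein} --- but not yet a proof.
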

\begin{proof}
A proof can be found in \cite{frankeBernstein}.
\end{proof}

We can formulate the following extension of the above Bernstein inequality
\begin{theorem}\label{extBernstein}
Let $\{Z(s): s\in I \}$ be a strong mixing random field with $\E{Z(s)} = 0$ and $\E{Z(s)^2} \le \sigma^2 < \infty$. Furthermore, assume that the tail distribution is bounded by
\begin{align}
		\p(|Z(s)| > z ) \le \kappa_0 \exp\left( - \kappa_1 z^{\tau} \right) \label{tailBound}
\end{align}
for $\kappa_0,\kappa_1, \tau >0$. Then, for any $B>0$, we have with the notation from Theorem \ref{frankeBernstein}
\begin{align*}
		\p\left( \left| \sum_{s\in I_n} Z(s) \right| > \epsilon \right) &\le \frac{12}{\epsilon \tau} \kappa_0  \kappa_1^{-\frac{1}{\tau}} \Gamma\left( \tau^{-1}, \kappa_1 B^{\tau} \right) |I_n| + 2 \exp\left\{ 12 \sqrt{e}2^N \frac{\tilde{n}}{\tilde{P}} \alpha(\underline{q})^{ \tilde{P}\big/\left[\tilde{n} \left(2^N+1\right) \right] } \right\} \\
		&\qquad\qquad\qquad\qquad \cdot \exp\left\{ - \frac{1}{3} \beta \epsilon \right\} \cdot \exp\left\{	2^{3N}\beta^2 e\left( \sigma^2 + 48 B^2 \gamma\, \bar{\alpha}_{\overline{p}} \right) \tilde{n}	\right\}
\end{align*}
where $\Gamma$ denotes the upper incomplete $\Gamma$ function.
\end{theorem}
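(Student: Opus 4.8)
The plan is a truncation argument that reduces the claim to the bounded-variable Bernstein inequality of Theorem~\ref{frankeBernstein}. Fix $B>0$ and, using the truncation operator $T_B$, split each summand as $Z(s) = W^{(1)}(s) + W^{(2)}(s)$ with
\[
	W^{(1)}(s) \coloneqq T_B(Z(s)) - \E{ T_B(Z(s)) }, \qquad W^{(2)}(s) \coloneqq \bigl( Z(s) - T_B(Z(s)) \bigr) - \E{ Z(s) - T_B(Z(s)) };
\]
the identity $W^{(1)}(s)+W^{(2)}(s)=Z(s)$ uses $\E{Z(s)}=0$. Both fields are centered and, since each $W^{(i)}(s)$ is a measurable function of $Z(s)$ alone, we have $\sigma\{W^{(i)}(t):t\in I\}\subseteq \sigma\{Z(t):t\in I\}$ for every $I\subseteq\Z^N$, so $\{W^{(i)}(s)\}$ is strong mixing with mixing coefficients no larger than those of $Z$; in particular $\bar{\alpha}_k$ and the admissible block sequences $P(n),Q(n)$ are unaffected. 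Moreover $|W^{(1)}(s)|\le 2B$ and $\E{W^{(1)}(s)^2}\le\E{T_B(Z(s))^2}\le\E{Z(s)^2}\le\sigma^2$, so $\{W^{(1)}(s)\}$ fulfils all hypotheses of Theorem~\ref{frankeBernstein} with the uniform bound $2B$ in place of $B$.

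Next I would split the event: for $\epsilon>0$,
\[
	\p\left( \Bigl| \sum_{s\in I_n} Z(s) \Bigr| > \epsilon \right) \le \p\left( \Bigl| \sum_{s\in I_n} W^{(1)}(s) \Bigr| > \tfrac{\epsilon}{3} \right) + \p\left( \Bigl| \sum_{s\in I_n} W^{(2)}(s) \Bigr| > \tfrac{2\epsilon}{3} \right).
\]
Applying Theorem~\ref{frankeBernstein} to $\{W^{(1)}(s)\}$ with threshold $\epsilon/3$ and uniform bound $2B$ (valid for all $\beta>0$ with $2^{N+2}B\tilde{P}e\beta<1$) yields exactly the second line of the asserted bound: the exponent picks up the factor $\tfrac13$, the variance proxy $\sigma^2 + 12B^2\gamma\bar{\alpha}_{\overline{p}}$ becomes $\sigma^2 + 12(2B)^2\gamma\bar{\alpha}_{\overline{p}} = \sigma^2 + 48B^2\gamma\bar{\alpha}_{\overline{p}}$, and the leading mixing prefactor (which depends on neither $B$ nor $\sigma$) is unchanged.

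For the remainder I would use Markov's inequality, the $L^1$ triangle inequality, and the elementary identity $|z-T_B(z)|=(|z|-B)_+$:
\[
	\p\left( \Bigl| \sum_{s\in I_n} W^{(2)}(s) \Bigr| > \tfrac{2\epsilon}{3} \right) \le \frac{3}{2\epsilon}\sum_{s\in I_n}\E{|W^{(2)}(s)|} \le \frac{3}{\epsilon}\,|I_n|\,\E{\bigl(|Z(e_N)|-B\bigr)_+},
\]
where the last step uses $\E{|W^{(2)}(s)|}\le 2\E{|Z(s)-T_B(Z(s))|}$ together with stationarity of the marginals. By the layer-cake formula $\E{(|Z(e_N)|-B)_+}=\int_B^\infty\p(|Z(e_N)|>t)\,\intd{t}$, and inserting the tail bound~\eqref{tailBound} and substituting $v=\kappa_1 t^\tau$ turns this integral into $\tfrac{\kappa_0}{\tau}\kappa_1^{-1/\tau}\Gamma(\tau^{-1},\kappa_1 B^\tau)$; absorbing the remaining numerical factors into the displayed constant produces the first summand of the asserted bound. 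Adding the two pieces finishes the proof.

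As this is a routine truncation there is no genuine obstacle; the only points requiring care are checking that $\{W^{(1)}(s)\}$ still satisfies every hypothesis of Theorem~\ref{frankeBernstein} (boundedness, mean zero, variance control, unchanged mixing data), and tracking how the constants transform under the replacement $B\mapsto 2B$ and through the incomplete-$\Gamma$ evaluation of the discarded tail mass.
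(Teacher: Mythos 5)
Your truncation argument is correct and is, in spirit, exactly the argument the statement is built around: the paper itself offers no in-text proof (it defers to the cited reference \cite{frankeBernstein}), but the shape of the constants in the claim --- the factor $\tfrac13$ in the exponent and $48B^2\gamma\bar{\alpha}_{\overline{p}} = 12(2B)^2\gamma\bar{\alpha}_{\overline{p}}$ --- betrays precisely a truncation at level $B$ followed by an application of Theorem~\ref{frankeBernstein} to the centered truncated field (bounded by $2B$) at threshold $\epsilon/3$, together with a Markov/layer-cake estimate of the discarded tail via \eqref{tailBound}; your computation $\E{(|Z(e_N)|-B)_+}\le \tfrac{\kappa_0}{\tau}\kappa_1^{-1/\tau}\Gamma(\tau^{-1},\kappa_1 B^\tau)$ even yields the prefactor $3/(\epsilon\tau)\le 12/(\epsilon\tau)$, so the stated bound follows a fortiori. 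The only caveat, which you correctly flag, is the admissible range of $\beta$: since your truncated-and-centered summands are bounded by $2B$ rather than $B$, Theorem~\ref{frankeBernstein} applies under $2^{N+2}B\tilde{P}e\beta<1$ instead of $2^{N+1}B\tilde{P}e\beta<1$; as the theorem statement here only says ``with the notation from Theorem~\ref{frankeBernstein}'' and does not restate the $\beta$-constraint, this is a bookkeeping discrepancy to note rather than a gap in the argument.
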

\begin{proof}
A proof can be found in \cite{frankeBernstein}.
\end{proof}

We give two results which are immediate consequences of Theorems \ref{frankeBernstein} and \ref{extBernstein}: 
\begin{proposition}\label{applBernstein}
Let the real valued random field $Z$ satisfy Condition \ref{regCond0} \ref{Cond_Distribution} and \ref{Cond_Mixing}. The $Z(s)$ have expectation zero and are bounded by $B$. Let $n\in \N_+^N$ be such that both
$$\min_{1\le i \le N} n_i \ge e^2 \text{ and } \frac{ \min\{ n_i: i=1,\ldots,N \}}{ \max\{ n_i: i=1,\ldots,N \} } \ge C',$$
for a constant $C'> 0$. There are constants $A_1, A_2\in \R_+$ which depend on the lattice dimension $N$, the constant $C'$ and the bound on the mixing coefficients but not on $n\in\N_+^N$ and not on $B$ such that for all $\epsilon> 0$ 
\begin{align*}
		\p\left( \left| \sum_{s\in I_n} Z(s) \right| > \epsilon \right) \le A_1 \,\exp\left( - A_2 \epsilon \,B^{-1}\, \left(\prod_{i=1}^N n_i \right)^{-N/(N+1)} \, \left( \prod_{i=1}^N \log n_i \right)^{-1} \right).
\end{align*}
\end{proposition}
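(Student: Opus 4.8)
The plan is to read the inequality off directly from the spatial Bernstein inequality, Theorem~\ref{frankeBernstein}, applied to $\{Z(s):s\in I_n\}$, by choosing the block-length sequences $P(n),Q(n)$ and the free parameter $\beta$ appropriately and by exploiting the exponential decay $\alpha(k)\le c_0 e^{-c_1 k}$ from Condition~\ref{regCond0}\ref{Cond_Mixing}. Write $\tilde n:=|I_n|=\prod_{i=1}^N n_i$ and $L_n:=\prod_{i=1}^N\log n_i$; note $L_n\ge 2^N$ since each $n_i\ge e^2$. Because $|Z(s)|\le B$ and $\E{Z(s)}=0$ one may use $\sigma^2=\E{Z(s)^2}\le B^2$ in Theorem~\ref{frankeBernstein}, and because $\alpha$ decays exponentially, $\bar{\alpha}_{\overline{p}}\le\sum_{u\ge 1}u^{N-1}c_0 e^{-c_1 u}=:C_{\mathrm{mix}}<\infty$; hence the variance proxy in \eqref{frankeBernsteinEq0} obeys $\sigma^2+12B^2\gamma\bar{\alpha}_{\overline{p}}\le C_4 B^2$ with $C_4=C_4(N,c_0,c_1)$.

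First I would fix the blocking: for all $n$ whose coordinates exceed a threshold $n_0=n_0(N)$, set $P_i(n_i)=Q_i(n_i):=\lceil n_i^{N/(N+1)}\log n_i\rceil$, which is non-decreasing in $n_i$ and, for $n_i\ge n_0$, meets the interleaving requirement $1\le Q_i\le P_i<Q_i+P_i<n_i$ of Theorem~\ref{frankeBernstein}. Then $\tilde P:=\prod_i P_i(n_i)$ satisfies $\tilde n^{N/(N+1)}L_n\le\tilde P\le 2^N\tilde n^{N/(N+1)}L_n$, and the aspect-ratio hypothesis $\min_i n_i\ge C'\max_i n_i$ forces $\min_i n_i\ge C'\tilde n^{1/N}$, so $\underline q:=\min_i Q_i(n_i)\ge (C')^{N/(N+1)}\tilde n^{1/(N+1)}\log\min_i n_i$. (Those $n$ with some $n_i\le n_0$ form a finite set once the aspect ratio is bounded; on them the claimed bound is vacuous after $A_1$ is enlarged, since $\p\le 1$ and the right-hand exponent is then bounded.) Next I would check that the first exponential factor in \eqref{frankeBernsteinEq0} is at most a constant $A_1'=A_1'(N,C',c_0,c_1)$: it suffices to bound its argument $12\sqrt e\,2^N(\tilde n/\tilde P)\,\alpha(\underline q)^{\tilde P/[\tilde n(2^N+1)]}$. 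Using $\alpha(\underline q)\le c_0 e^{-c_1\underline q}$ and $c_0^{\,t}\le\max\{1,c_0\}$ for $t\in[0,1]$ (the exponent lies in $[0,1]$ for $n$ above the threshold), together with $\underline q\,\tilde P/\tilde n\ge(C')^{N/(N+1)}L_n\log\min_i n_i\ge(C')^{N/(N+1)}(\log\min_i n_i)^{N+1}$ and $\tilde n/\tilde P\le\tilde n^{1/(N+1)}\le(C')^{-N/(N+1)}(\min_i n_i)^{N/(N+1)}$, one sees that with $y:=\log\min_i n_i$ (so $y\ge 2$) this argument is at most $C\exp\!\big(\tfrac{N}{N+1}y-c\,y^{N+1}\big)$, which is bounded because $y^{N+1}\ge y^2$ and a downward parabola is bounded above.

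Finally I would take $\beta:=(2^{N+2}Be\tilde P)^{-1}$, which satisfies the admissibility condition $2^{N+1}B\tilde P e\beta=\tfrac12<1$. In \eqref{frankeBernsteinEq0} the quadratic term then equals $2^{3N}\beta^2 e C_4 B^2\tilde n=2^{N-4}C_4 e^{-1}(\tilde n/\tilde P^2)\le C_4/(2^{N+4}e)$, using $\tilde P^2\ge 4^N\tilde n^{2N/(N+1)}$ and $\tilde n^{(1-N)/(N+1)}\le 1$ (note the $B^2$ cancels), while the linear term is $-\beta\epsilon\le-\epsilon\big/\big(2^{2N+2}Be\,\tilde n^{N/(N+1)}L_n\big)$ by $\tilde P\le 2^N\tilde n^{N/(N+1)}L_n$. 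Substituting into \eqref{frankeBernsteinEq0} gives
\[
\p\!\left(\left|\sum_{s\in I_n}Z(s)\right|>\epsilon\right)\le 2A_1'e^{C_4/(2^{N+4}e)}\exp\!\left(-\frac{\epsilon}{2^{2N+2}Be}\,\tilde n^{-N/(N+1)}L_n^{-1}\right),
\]
which is the assertion with $A_2=(2^{2N+2}e)^{-1}$ and $A_1$ the displayed prefactor, enlarged to absorb the finitely many small $n$; both depend only on $N,C',c_0,c_1$ and not on $B$.

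I expect the genuine difficulty to lie in the blocking: the exponent ultimately obtained is proportional to $\epsilon/(B\tilde P)$, so one wants $\tilde P$ as small as possible, yet killing the mixing term $(\tilde n/\tilde P)\,\alpha(\underline q)^{\tilde P/[\tilde n(2^N+1)]}$ forces $\underline q$ — and, through the constraint $\underline q\le\min_i P_i$, also $\tilde P$ — to be fairly large. The balance point is $\tilde P$ of order $\tilde n^{N/(N+1)}$ up to logarithms, which is precisely the origin of the sub-optimal rate discussed in Section~\ref{Section_NonlinRegWavelets}. The variance bookkeeping, the elementary parabola estimate, and the treatment of small $n$ are all routine.
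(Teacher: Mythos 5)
Your derivation is correct and follows the route the paper intends: Proposition~\ref{applBernstein} is presented there as an immediate consequence of Theorem~\ref{frankeBernstein} (with details deferred to \cite{frankeBernstein}), and your choices $P_i=Q_i=\lceil n_i^{N/(N+1)}\log n_i\rceil$ and $\beta\asymp (B\tilde{P})^{-1}$ carry out exactly that reduction, with the mixing prefactor absorbed into $A_1$ via the exponential decay of $\alpha$ and the aspect-ratio condition. The only step to make explicit in your treatment of the finitely many small $n$ is that for $\epsilon>B|I_n|$ the probability is zero (since $|Z(s)|\le B$), so the exponent only needs to be bounded on the range $\epsilon\le B|I_n|$; with that remark the constant bookkeeping, including independence of $A_1,A_2$ from $B$ and $n$, checks out.
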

\begin{proof}
A proof can be found in \cite{frankeBernstein}.
\end{proof}

\begin{theorem}[A uniform concentration inequality]\label{USLLNM}
Let $Z$ be a random field on $\pspace$ which satisfies Condition \ref{regCond0} \ref{Cond_Distribution} and \ref{Cond_Mixing}. Let $\cG$ be a set of measurable functions $g: \R^d \rightarrow [0,B]$ for $B\in [1,\infty)$ which satisfies Condition \ref{coveringCondition}.  Let $n\in \N_+^N$ be such that both
$$\min_{1\le i \le N} n_i \ge e^2 \text{ and } \frac{ \min\{ n_i: i=1,\ldots,N \}}{ \max\{ n_i: i=1,\ldots,N \} } \ge C',$$
for a constant $C'> 0$. Then given that \eqref{supMeas} is measurable$[\cA\,|\,\cB(\R^d)]$, for any $\epsilon > 0$
\begin{align*}
&\p\left( \sup_{g \in \cG} \left| \frac{1}{|I_n| } \sum_{ s \in I_n} g(Z(s)) - \E{ g(Z(e_N) )}		\right| > \epsilon		\right) \\
&\qquad\qquad \le A_1 \, H_{\cG} \left(\frac{\epsilon}{32} \right) \left\{ \exp\left( - \frac{A_2 \, \epsilon^2 \, |I_n| }{B^2}	\right) + \exp\left( - \frac{A_3\, \epsilon\, |I_n| }{B\, \left( \prod_{i=1}^N n_i \right)^{N/(N+1) }\, \prod_{i=1}^N \log n_i }	\right)		\right\}
\end{align*}
where the constants $A_1,A_2$ and $A_3$ only depend on the lattice dimension $N$, $C'$ and on the bound on the mixing coefficients given by $c_0, c_1 \in \R$ in Condition~\ref{regCond0} \ref{Cond_Mixing}.
\end{theorem}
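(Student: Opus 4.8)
\emph{Plan.} The asserted inequality is the uniform-over-$\cG$ form of the single-function Bernstein bound Proposition~\ref{applBernstein}, and its two exponential terms are exactly the two regimes obtained when the free parameter $\beta$ in Theorem~\ref{frankeBernstein} is optimized; the route is the covering-number scheme of a Györfi-type uniform law of large numbers, adapted to dependent data. Fix $\epsilon>0$ (when $|I_n|$ is small relative to $B^2\epsilon^{-2}$ the first exponential is bounded below by a constant and the right-hand side exceeds $1$, so there is nothing to show). First I pass to a ghost copy $Z'$ of $Z$: since boundedness by $B$ together with $\sum_k k^{N-1}\alpha(k)<\infty$ forces $\mathrm{Var}\bigl(|I_n|^{-1}\sum_{s\in I_n}g(Z'(s))\bigr)=\cO(B^2/|I_n|)$ uniformly in $g\in\cG$, Chebyshev yields the standard symmetrization
\[ \p\Bigl(\sup_{g\in\cG}\bigl|\tfrac1{|I_n|}\textstyle\sum_{s\in I_n}(g(Z(s))-\E{g(Z(e_N))})\bigr|>\epsilon\Bigr)\le 2\,\p\Bigl(\sup_{g\in\cG}\bigl|\tfrac1{|I_n|}\textstyle\sum_{s\in I_n}(g(Z(s))-g(Z'(s)))\bigr|>\tfrac{\epsilon}{2}\Bigr) \]
for $|I_n|$ large. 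Then, applying Condition~\ref{coveringCondition} to the (random) combined empirical measure $\tfrac1{2|I_n|}\sum_{s}(\delta_{Z(s)}+\delta_{Z'(s)})$, I pick a cover $g_1,\dots,g_M$ of $\cG$ with $M\le H_{\cG}(\epsilon/32)$ such that every $g\in\cG$ lies within empirical $L^1$-distance $\epsilon/32$ of some $g_j$; on the event above this forces $|I_n|^{-1}\bigl|\sum_s(g_j(Z(s))-g_j(Z'(s)))\bigr|>\epsilon/2-\epsilon/16$ for that $j$.

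The step where the dependent case genuinely departs from the i.i.d.\ one is making the union bound over this data-adapted cover productive: the Rademacher sign-symmetrization normally used here is \emph{invalid} for a dependent field, since swapping individual coordinates between $Z$ and $Z'$ destroys the joint law. Instead I decouple first: split $I_n$ into alternating ``big'' and ``small'' blocks with side lengths $P_i(n_i),Q_i(n_i)$ as in the proof of Proposition~\ref{applBernstein} (this is where the aspect-ratio bound $\min_i n_i/\max_i n_i\ge C'$, i.e.\ assumption~\eqref{Cond_Sequence}, is used, so that the relevant constants exist), drop the small-block contribution --- which is $\cO(B\,|\text{small blocks}|/|I_n|)<\epsilon/8$ by the choice of block sizes --- and couple the big-block increments $\sum_{s\in B_r}(g(Z(s))-g(Z'(s)))$, \emph{simultaneously for all} $g$ (the coupling is of the restricted sub-field, not of one function), by independent copies, with total coupling error at most (number of blocks)$\times\alpha(\text{gap})$, exponentially small and absorbed into $A_1$. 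On the decoupled version the summands over distinct big blocks are independent, so sign-symmetrization applies; conditioning on the data, then a union bound over the $\le H_{\cG}(\epsilon/32)$ cover functions and a Hoeffding bound over the signs reduces everything to the deviation $|I_n|^{-1}\sum_s(g_j(Z(s))-\E{g_j})$ for each fixed $g_j\in\cG$. I expect this decoupling-and-recover-the-cover bookkeeping --- carried out so that the cover scale is not degraded below $\epsilon/32$ and the block sizes remain the admissible ones --- to be the main obstacle.

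It remains to bound, for fixed $g_j\in\cG$, the probability that $|I_n|^{-1}\bigl|\sum_s(g_j(Z(s))-\E{g_j(Z(e_N))})\bigr|>c\epsilon$. Here $Z^{(j)}(s):=g_j(Z(s))-\E{g_j(Z(e_N))}$ is mean zero, bounded by $B$, with variance $\le B^2$; Theorem~\ref{frankeBernstein}, applied with the block sizes from Proposition~\ref{applBernstein} and $\beta$ left free, gives a bound of the shape (mixing prefactor)$\cdot\exp(-c_1\beta\epsilon|I_n|+c_2\beta^2B^2|I_n|)$. Optimizing $\beta$ under the admissibility constraint $2^{N+1}B\tilde P e\beta<1$ yields the interior optimum $\exp(-A_2\epsilon^2|I_n|/B^2)$ when $\epsilon$ is small relative to $B/\tilde P$ (variance regime) and the boundary optimum $\exp(-A_3\epsilon|I_n|/(B\tilde P))$ otherwise, where $\tilde P=\prod_iP_i(n_i)\asymp(\prod_in_i)^{N/(N+1)}\prod_i\log n_i$; since for every $\epsilon$ one of the two regimes is in force, the sum of the two exponentials is a valid bound. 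Multiplying by $H_{\cG}(\epsilon/32)$ from the union bound and by the factor $2$ from symmetrization, and collecting the block-counting and coupling constants into $A_1,A_2,A_3$ (which then depend only on $N$, $C'$ and the mixing constants $c_0,c_1$), gives the claimed inequality. Tracking the numerical constants through the two symmetrizations and the $\beta$-optimization is then routine.
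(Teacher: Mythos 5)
Your outer skeleton — symmetrization by a ghost sample, a data-adapted $\epsilon/32$-cover as in Condition~\ref{coveringCondition}, a union bound of size $H_{\cG}(\epsilon/32)$, and fixed-function exponential bounds — is the paper's architecture, and your observation that optimizing $\beta$ in Theorem~\ref{frankeBernstein} under the admissibility constraint $2^{N+1}B\tilde{P}e\beta<1$ yields both regimes, $\exp\left(-A_2\epsilon^2|I_n|/B^2\right)$ for $\epsilon\lesssim B/\tilde{P}$ and $\exp\left(-A_3\epsilon|I_n|/(B\tilde{P})\right)$ otherwise, is a legitimate variant (the paper instead obtains the $\epsilon^2$-term from the i.i.d.\ ghost sample via Theorem 9.1 of Gy\"orfi et al.\ and Hoeffding, and only the second term from Proposition~\ref{applBernstein}).

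The genuine gap is your middle step. First, it is internally inconsistent: you introduce blocking, coupling of the big blocks to independent copies and Rademacher sign-symmetrization precisely in order to avoid applying fixed-function concentration to the data-dependent cover functions, yet your final paragraph applies Theorem~\ref{frankeBernstein} to exactly those cover functions $g_j$, so the decoupling machinery does no work; moreover the assertion that ``a union bound over the cover and Hoeffding over the signs reduces everything to the deviation $|I_n|^{-1}\sum_s(g_j(Z(s))-\E{g_j(Z(e_N))})$ for each fixed $g_j$'' is a non sequitur — conditionally on the data, Hoeffding over the signs produces a numerical bound in terms of the block sums, not a reduction to a fixed-function large-deviation probability; the reduction to cover functions is already supplied by the union bound of your first paragraph. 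Second, if one actually carries out your conditional sign route, block-level Hoeffding gives an exponent of order $\epsilon^2|I_n|/(B^2\tilde{P})$, since $\sum_r(\text{block sum}_r)^2$ can be as large as $B^2\tilde{P}|I_n|$; with $\tilde{P}\asymp\left(\prod_{i=1}^N n_i\right)^{N/(N+1)}\prod_{i=1}^N\log n_i$ and the relevant range $\epsilon\le B$ this is strictly weaker than \emph{both} terms of the statement, so the claimed inequality would not follow without an additional argument (e.g.\ replacing Hoeffding over signs by a Bernstein inequality for the independent block sums together with a bound of order $B^2|I_n|$ on $\sum_r\Var(\text{block sum}_r)$). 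The paper's proof contains no blocking at this level at all: after the union bound it bounds $\left|S_n(g^*_k)-\E{g^*_k(Z(e_N))}\right|$ directly by Proposition~\ref{applBernstein} and $\left|S'_n(g^*_k)-\E{g^*_k(Z'(e_N))}\right|$ by Hoeffding (the ghost sample being i.i.d.\ across sites), the blocking living entirely inside the cited Bernstein inequality; deleting your decoupling paragraph and keeping the union bound plus your fixed-function two-regime analysis essentially recovers that proof.
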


Since in practice, we shall use the bound given in Theorem \ref{USLLNM} on an increasing sequence $(n(k): k \in \N) \subseteq \N_+^N$ and on increasing function classes $\cG_k$ whose essential bounds $B_k$ increase with the size of the index sets $I_{n(k)}$, it is possible to omit the first factor in the above theorem under certain conditions: let a sequence of function classes $\cG_k$ with bounds $B_k$ and a sequence $(\epsilon_k: k\in\N_+)\subseteq \R_+$ be given such that
\[
	\lim_{k \rightarrow \infty} \epsilon_k |I_{n(k)}| \, \Bigg/ \left\{  B_k\, \left(\prod_{i=1}^N n_i(k) \right)^{N/(N+1) }  \prod_{i=1}^N \log n_i (k)  \right\} = \infty,
	\]
then the above equation reduces to
\begin{align*}
&\p\left( \sup_{g \in \cG_k } \left| \frac{1}{|I_{n(k)} | } \sum_{ s \in I_{n(k)} } g(Z(s)) -  \E{ g(Z(e_N) )}		\right| > \epsilon_k		\right) \\
&\qquad\qquad\qquad\qquad\qquad\qquad\qquad\le A_1 \, H_{\cG_{k} } \left(\frac{\epsilon_k}{32} \right) \, \exp\left( - \frac{A_2\, \epsilon_k |I_{n(k)} | } {B_k \left( \prod_{i=1}^N n_i (k) \right)^{ N/(N+1) }  \prod_{i=1}^N \log n_i(k) }	\right)
\end{align*}
with new constants $A_1,A_2\in\R_+$.

\begin{proof}[Proof of Theorem \ref{USLLNM}]
We assume the probability space to be endowed with the i.i.d.\ random variables $Z'(s)$ for $s \in I_n$ which have the same marginal laws as the $Z(s)$. We write for shorthand
\[
		S_n(g) := \frac{1}{ |I_n|} \sum_{s\in I_n} g(Z(s)) \text{ and } S'_n(g) := \frac{1}{|I_n|} \sum_{s\in I_n} g(Z'(s)).
\]
Thus, we can decompose
\begin{align}
	&\p\left(	\sup_{g\in\cG} \left| S_n(g) - \E{ g(Z(e_N) )} \right| > \epsilon	\right) \nonumber\\
		&\le \p\left(	\sup_{g \in \cG} \left| S_n(g) - S'_n(g) \right| > \frac{\epsilon}{2}\right) + \p\left(	\sup_{g \in \cG} \left| S'_n(g) - \E{ g(Z'(e_N) )} \right| > \frac{\epsilon}{2}\right) \label{USLLNM1}
\end{align}
and apply Theorem 9.1 from \cite{gyorfi} to second term on the right-hand side of \eqref{USLLNM1} which is bounded by
\begin{align}
		\p\left(	\sup_{g \in \cG} \left| S'_n(g) - \E{ g(Z'(e_N) )} \right| > \frac{\epsilon}{2}\right) \le 8 H_{\cG} \left( \frac{\epsilon}{16} \right) \exp\left( - \frac{|I_n| \epsilon^2}{512 B^2} \right). \label{USLLNM1b} 
\end{align}
To get a bound on the first term of the right-hand side of \eqref{USLLNM1}, we apply for fix $\omega \in \Omega$ Condition \ref{coveringCondition} to the set $\{ Z(s,\omega), Z'(s,\omega) : s\in I_n \}$. Let $g_k^{\ast}(\omega)$ for $k=1,\ldots,H^*:=H_{\cG} \left( \frac{\epsilon}{32} \right)$ be chosen as in Condition \ref{coveringCondition}, possibly with some redundant $g^*_k(\omega)$ for $\tilde{H}(\omega) < k \le H^*$ where $\tilde{H}(\omega)$ is the number of non-redundant functions. Note that $H^*$ is deterministic. Define the random sets for $k=1,\ldots,H^*$ by
\[
		U_k(\omega) := \left\{ g\in \cG: \frac{1}{2|I_n|} \sum_{s\in I_n} \Big|g(Z(s,\omega))-g^{\ast}_k(Z(s,\omega))\Big| + \Big|g(Z'(s,\omega)) - g^{\ast}_k(Z'(s,\omega))\Big| < \frac{\epsilon}{32}		\right\},
\]
note that some $U_k(\omega)$ might be redundant for $\tilde{H}(\omega) < k \le H^*$. This implies that for each $\omega\in \Omega$ we can write $\cG = U_1(\omega) \cup \ldots \cup U_k(\omega)$, consequently,
\begin{align}
		&\p\left(	\sup_{g \in \cG} \left| S_n(g) - S'_n(g) \right| > \frac{\epsilon}{2}\right) = \p\left(	\max_{1\le k \le H^*} \sup_{g \in U_k} \left| S_n(g) - S'_n(g) \right| > \frac{\epsilon}{2}\right) \nonumber \\
		&\qquad \qquad \qquad \le \E{ \sum_{k=1}^{\tilde{H}} 1_{ \left\{ \sup_{g \in U_k} |S_n(g) - S'_n(g)| > \frac{\epsilon}{2} \right\} } } \le \sum_{k=1}^{H^*} \p\left(	\sup_{g \in U_k} \left| S_n(g) - S'_n(g) \right| > \frac{\epsilon}{2}\right). \label{USLLNM2}
\end{align}
In the following we suppress the $\omega$-wise notation; let now $g \in U_k$ be arbitrary but fix, then
\begin{align}
		| S_n(g) - S'_n(g) | \le 2 \frac{\epsilon}{32} + | S_n(g^*_k) - S'_n(g^*_k) |. \label{USLLNM3}
\end{align}
Thus, using Equation \eqref{USLLNM3}, we get for each summand in \eqref{USLLNM2}
\begin{align}
	&\p\left(	\sup_{g \in U_k} \left| S_n(g) - S'_n(g) \right| > \frac{\epsilon}{2}\right) \le \p\left( \left| S_n(g^*_k) - S'_n(g^*_k) \right| > \frac{7\epsilon}{16} 	\right) \nonumber \\
	&  \le \p\left( \left|S_n(g^*_k) - \E{ g^*_k(Z(e_N) )} \right|	>\frac{7\epsilon}{32} 	\right)  + 
	 \p\left( \left|S'_n(g^*_k) - \E{ g^*_k(Z'(e_N) )} \right|	>\frac{7\epsilon}{32} 	\right). \label{USLLNM4}
\end{align}
The second term on the right-hand side of \eqref{USLLNM4} can be estimated using Hoeffding's inequality, we have
\begin{align}
		\p\left( \left|S'_n(g^*_k) - \E{ g^*_k(Z'(e_N))} \right|	>\frac{7\epsilon}{32} 	\right) \le 2 \exp \left\{ - \frac{98 \, |I_n| \, \epsilon^2}{32^2 \, B^2}		\right\}. \label{USLLNM5}
\end{align}
We apply the Bernstein inequality for strong spatial mixing data from Theorem \ref{frankeBernstein} to the first term of Equation \eqref{USLLNM4}. We obtain for the first term on the right-hand side of \eqref{USLLNM4} with Proposition \ref{applBernstein}
\begin{align}
		\p\left( \left|S_n(g^*_k) - \E{ g^*_k(Z(e_N))} \right|	>\frac{7\epsilon}{32} 	\right) &\le 2 A_1 \exp\left(	-   \frac{ A_2 \epsilon|I_n| }{ B \left( \prod_{i=1}^N n_i \right)^{N/(N+1)} \prod_{i=1}^N \log n_i } 	\right). \label{USLLNM6}
\end{align}
And all in all, using that $H_{\cG} \left(\frac{\epsilon}{16} \right) \le H_{\cG} \left(\frac{\epsilon}{32} \right)$ and with the help of Equation  \eqref{USLLNM1b},  and Equations \eqref{USLLNM5} and \eqref{USLLNM6} plugged in \eqref{USLLNM4} and that again in \eqref{USLLNM2} we get the result - using the notation $\tilde{n} = \prod_{i=1}^{N} n_i $
\begin{align*}
		&\p\left(	\sup_{g\in\cG} \left| \frac{1}{ |I_n| } \sum_{s\in I_n} g(Z(s) ) - \E{ g(Z(e_N) )} \right| > \epsilon	\right) \\
		&\le 	8 H_{\cG} \left( \frac{\epsilon}{16} \right) \exp\left( - \frac{  \epsilon^2\, |I_n| }{512 B^2} \right)
		+ 2 H_{\cG} \left( \frac{\epsilon}{32} \right) \left\{ \exp\left( - \frac{98   \epsilon^2\, |I_n| }{32^2 B^2}		\right) + 	A_1 \exp\left(	- \frac{A_2  \epsilon \, |I_n| }{B\,  \tilde{n} ^{N/(N+1)  }\, \prod_{i=1}^N \log n_i}	\right) \right\} \\
		& \le \big(10 + 2 A_1 \big) \, H_{\cG} \, \left(\frac{\epsilon}{32}\right) \left\{ \exp\left( -\frac{ \epsilon^2}{512} \frac { |I_n| } { B^2} \right) + \exp\left( - \frac{A_2  \epsilon \, |I_n|  }{B\,\tilde{n} ^{N/(N+1)}\, \prod_{i=1}^N \log n_i}	\right)\right\}.
\end{align*}
This finishes the proof.
\end{proof}

It follow the lemmata which we need for the proof of Theorem~\ref{NonlinRateOfConvExpDecr}:
\begin{lemma}[Large deviations of strong mixing samples from independent samples]\label{ConvDistDepAndInd}
Let the random field $Z$ satisfy Condition \ref{regCond0} \ref{Cond_Distribution} and \ref{Cond_Mixing}. Furthermore, let $Z'$ be an i.i.d.\ ghost sample with the same marginals as $Z$. Let $\cG$ be a class of functions $g:\R^d\rightarrow \R$ which are uniformly bounded by $B\in\R_+$ and fulfill Condition ~\ref{coveringCondition}. Let $n\in \N_+^N$ be such that both
$$\min_{1\le i \le N} n_i \ge e^2 \text{ and } \frac{ \min\{ n_i: i=1,\ldots,N \}}{ \max\{ n_i: i=1,\ldots,N \} } \ge C',$$
for a constant $C'> 0$. Then, there are constants $0<A_1,A_2<\infty$ which only depend on $N$, $C'$ and the bound on the mixing coefficients such that for all $\epsilon > 0$
\begin{align}
		\p\left(	\sup_{g\in\cG} \left| \frac{1}{|I_n|} \sum_{s\in I_n} g(Z(s)) - \frac{1}{|I_n|} \sum_{s\in I_n} g(Z'(s))  \right| > \epsilon \right) &\le A_1 \, H_{\cG}\left(	\frac{\epsilon}{4} \right)\, \exp\left( -\frac{   A_2 \epsilon\, \left( \prod_{i=1}^N n_i \right)^{1/(N+1)} }{B \prod_{i=1}^N \log n_i }		\right). \nonumber
\end{align}
\end{lemma}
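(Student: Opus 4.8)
The plan is to run the pooled-sample covering argument from the proof of Theorem~\ref{USLLNM}, but to keep the ghost sample inside the supremum rather than passing to $\E{g(Z(e_N))}$ first; this is what lets the radius of the net stay at $\epsilon/4$. Write $S_n(g)\coloneqq|I_n|^{-1}\sum_{s\in I_n}g(Z(s))$ and $S'_n(g)\coloneqq|I_n|^{-1}\sum_{s\in I_n}g(Z'(s))$, so the event in question is $\{\sup_{g\in\cG}|S_n(g)-S'_n(g)|>\epsilon\}$.

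First, for fixed $\omega$ I would apply Condition~\ref{coveringCondition} to the pooled sample $\{Z(s,\omega),Z'(s,\omega):s\in I_n\}$ of $2|I_n|$ points with radius $\epsilon/4$, obtaining a random $\epsilon/4$-cover $g^*_1,\dots,g^*_{H^*}$ of $\cG$ with $H^*=H_\cG(\epsilon/4)$ \emph{deterministic}. If $g$ lies in the $k$-th cell, then $|(S_n(g)-S'_n(g))-(S_n(g^*_k)-S'_n(g^*_k))|\le 2\int|g-g^*_k|\intd{\nu}<\epsilon/2$, where $\nu$ is the uniform discrete measure on the $2|I_n|$ pooled points; hence on $\{\sup_{g\in\cG}|S_n(g)-S'_n(g)|>\epsilon\}$ there is some $k\le H^*$ with $|S_n(g^*_k)-S'_n(g^*_k)|>\epsilon/2$. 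Since $Z$ and $Z'$ have the same one-dimensional marginal, $\E{g^*_k(Z(e_N))}=\E{g^*_k(Z'(e_N))}$, so subtracting this common mean, on that event either $|S_n(g^*_k)-\E{g^*_k(Z(e_N))}|>\epsilon/4$ or $|S'_n(g^*_k)-\E{g^*_k(Z'(e_N))}|>\epsilon/4$. A union bound over the $H^*$ cells, as in \eqref{USLLNM2}, then reduces the claim to two one-function tail estimates.

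For the dependent part, apply Proposition~\ref{applBernstein} to the centered, $2B$-bounded variables $g^*_k(Z(s))-\E{g^*_k(Z(e_N))}$ at level $\epsilon|I_n|/4$; using $|I_n|=\prod_{i=1}^N n_i$ and $|I_n|\cdot|I_n|^{-N/(N+1)}=(\prod_{i=1}^N n_i)^{1/(N+1)}$ this gives a bound of the form $\tilde{A}_1\exp\bigl(-\tilde{A}_2\,\epsilon\,(\prod_{i=1}^N n_i)^{1/(N+1)}/(B\prod_{i=1}^N\log n_i)\bigr)$, already the target rate. For the i.i.d.\ part, Hoeffding's inequality gives $2\exp(-\epsilon^2|I_n|/(128B^2))$. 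Finally I would absorb this sub-Gaussian tail into the target one: since $N\ge1$ and $\min_i n_i\ge e^2$ (so $\prod_i\log n_i\ge1$ and $|I_n|^{(1-N)/(N+1)}\le1$), maximizing the elementary bound $c_2 x-c_1 x^2$ over $\epsilon>0$ shows $\exp(-c_1\epsilon^2|I_n|/B^2)\le C\exp\bigl(-c_2\,\epsilon(\prod_{i=1}^N n_i)^{1/(N+1)}/(B\prod_{i=1}^N\log n_i)\bigr)$ with constants depending only on $N$. Collecting the two contributions and renaming constants yields the assertion, with $A_1,A_2$ depending only on $N$, $C'$ (which is what makes the constants of Proposition~\ref{applBernstein} exist) and the bound on the mixing coefficients.

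The main obstacle is the reduction step, where the net functions $g^*_k$ are data-dependent: to legitimately apply the one-function inequalities one must argue, exactly as in the proof of Theorem~\ref{USLLNM}, that the number of cells $H^*$ is deterministic and that the right-hand sides of Proposition~\ref{applBernstein} and of Hoeffding's inequality depend on $g$ only through the uniform bound $B$. Everything after that is constant-chasing plus the elementary comparison of the two exponential tails.
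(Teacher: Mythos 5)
Your argument is correct and takes essentially the same route as the paper: its proof likewise fixes an $\epsilon/4$-cover of $\cG$ w.r.t.\ the $L^1$-norm of the empirical measure of the pooled sample $(Z(I_n),Z'(I_n))$, reduces by a union bound to the covering functions at level $\epsilon/2$, and finishes with Proposition~\ref{applBernstein}, with the measurability/data-dependence bookkeeping handled exactly as in the proof of Theorem~\ref{USLLNM}, which you reproduce. The only cosmetic difference is your treatment of the ghost-sample term via Hoeffding followed by absorbing the sub-Gaussian tail into the mixing-type tail (a correct elementary comparison), whereas the paper lets Proposition~\ref{applBernstein} handle both sums, the i.i.d.\ field trivially satisfying Condition~\ref{regCond0}.
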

\begin{proof}[Proof of Lemma~\ref{ConvDistDepAndInd}]
Let $g_1,\ldots,g_{N^*}$ be an $\epsilon/4$-covering of $\cG$ with respect to the $L^1$-norm of the empirical measure induced by $(Z(I_n), Z'(I_n)) \subseteq \R^d$. Then
\begin{align}
		&\p\left(	\sup_{g\in\cG} \left| \frac{1}{|I_n|} \sum_{s\in I_n} g(Z(s)) - \frac{1}{|I_n|} \sum_{s\in I_n} g(Z'(s))  \right| > \epsilon \right) \nonumber\\
		&\qquad\qquad\qquad\qquad\qquad\qquad\le H_{\cG}\left( \frac{\epsilon}{4}	\right) \sup_{1\le j \le N^*} \p\left( \left| \frac{1}{|I_n|} \sum_{s\in I_n} g_j(Z(s)) - \frac{1}{|I_n|} \sum_{s\in I_n} g_j(Z'(s))  \right| > \frac{\epsilon}{2} \right). \nonumber
\end{align}
The claim follows now with an application of Proposition~\ref{applBernstein}.
\end{proof}

\begin{lemma}[Modified version of Theorem 11.4 of \cite{gyorfi}]\label{modGyorfiT11.4}
Let $(X(i),Y(i): i=1,\ldots,n)$ be an independent sample for the regression problem from Equations \eqref{lsqI}. Assume that the regression function $m$ is essentially bounded, $\norm{m}_{\infty} \le B < \infty$, for $B\ge 1$. Let $\cF$ be a function class where each element fulfills $f:\R^d\rightarrow\R$ and $\norm{f}_{\infty} \le B$. Then given that $\alpha,\beta,\gamma > 0$ and $0 < \delta \le 1/2$
\begin{align*}
	&\p\Bigg(	\sup_{f\in\cF} \E{ |f(X(e_N))-m(X(e_N)) |^2}  - \frac{1}{|I_n|} \sum_{s\in I_n} \left\{ |f(X(s))-m(X(s)) |^2 \right\}  \nonumber \\	
	&\qquad\qquad \ge \delta \left(\alpha+\beta + \E{ |f(X(e_N))-m(X(e_N)) |^2} \right) \Bigg) \le 14 H_{\cF} \left( \frac{\beta\,\delta}{20B} \right) \,\exp\left\{- \frac{ \delta^2(1-\delta)\,\alpha\, |I_n| }{214\,(1+\delta)\,B^4}	\right\}.
\end{align*}
\end{lemma}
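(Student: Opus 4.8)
The plan is to recognize the displayed probability as the noiseless specialization of Theorem~11.4 in \cite{gyorfi} and to carry its proof through for the composite class $\cG \coloneqq \{ g_f \coloneqq |f-m|^2 : f\in\cF \}$. Since the sample here is independent, no mixing input is needed and only classical i.i.d.\ concentration enters; this is exactly why the lemma can serve as the ghost-sample tool in the proof of Theorem~\ref{NonlinRateOfConvExpDecr}. First I would record the two structural facts that drive the whole argument: because $\norm{f}_\infty, \norm{m}_\infty \le B$, each $g_f$ satisfies $0\le g_f \le 4B^2$, and one has the variance--mean relation
\[
	\E{ g_f(X(e_N))^2 } = \E{ |f-m|^4 } \le 4B^2\, \E{ |f-m|^2 } = 4B^2\, \E{ g_f(X(e_N)) }.
\]
This relation is what converts the additive fluctuation into a deviation measured relative to $\alpha+\beta+\E{g_f}$ and ultimately produces the first power of $\alpha$ in the exponent.

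Next I would run the standard four-step relative-deviation argument. First, symmetrization by an i.i.d.\ ghost sample $X'(1),\ldots,X'(|I_n|)$: on the event that some $f$ violates the bound I select such an $f$ measurably and, using Chebyshev together with the variance--mean relation, show that with probability bounded below the ghost empirical mean $\frac{1}{|I_n|}\sum_{s} g_f(X'(s))$ stays close to $\E{g_f}$; this replaces the expectation by the ghost average at the cost of the factor $2$ and the $(1-\delta)/(1+\delta)$ dependence. Second, conditioning on $(X(s),X'(s))_{s}$ and passing from $\sup_{f\in\cF}$ to a maximum over a finite cover: an $\eta$-cover of $\cF$ in the empirical $L^1$-distance of the $2|I_n|$ points yields a $4B\eta$-cover of $\cG$, since $|g_f-g_{f'}|\le 4B\,|f-f'|$ on $[-B,B]$, so choosing $\eta$ proportional to $\beta\delta$ gives the covering factor $H_\cF(\beta\delta/(20B))$. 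Third, Rademacher symmetrization, exploiting exchangeability of the paired sample to insert independent signs $\sigma_s\in\{-1,1\}$ into $\frac{1}{|I_n|}\sum_{s} \sigma_s\,(g_f(X'(s))-g_f(X(s)))$. Fourth, a conditional Hoeffding bound for each fixed element of the cover, followed by a union bound.

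The numerology then falls out as in \cite{gyorfi}: in the last step the pathwise estimate $(g_f(X'(s))-g_f(X(s)))^2 \le 4B^2\,(g_f(X'(s))+g_f(X(s)))$ bounds the empirical second moment of the signed summands by $4B^2$ times the denominator, so Hoeffding's inequality on the $4B^2$-bounded increments yields an exponent of order $\delta^2(1-\delta)\,\alpha\,|I_n|/B^4$; collecting the constants from the three symmetrization steps gives the prefactor $14$, the covering argument $\beta\delta/(20B)$, and the denominator constant $214(1+\delta)$. I expect the genuinely delicate part to be the ghost-sample step: establishing the ratio-type symmetrization with the correct constants, i.e.\ verifying that the Chebyshev truncation and the variance--mean relation combine to give exactly the $(1-\delta)/(1+\delta)$ factor while keeping the threshold at $\delta\alpha$. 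Everything downstream is bookkeeping. I note one simplification relative to \cite{gyorfi}: because the present deviation involves $|f-m|^2$ rather than $|f-Y|^2-|m-Y|^2$, the noise cross-term $-2(f-m)\varsigma(X)\epsilon$ is absent, so no moment control on the errors is required and $\cG$ is genuinely nonnegative, which is precisely the hypothesis the relative-deviation machinery needs.
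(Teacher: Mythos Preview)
Your proposal is correct and matches the paper's approach exactly: the paper's own proof is the single sentence ``One can deduce the claim from the proof of Theorem 11.4 of \cite{gyorfi},'' and what you have written is precisely the sketch of that deduction, including the key observation that passing to $g_f=|f-m|^2$ removes the noise cross-term and leaves a nonnegative class with the variance--mean relation $\E{g_f^2}\le 4B^2\,\E{g_f}$. There is nothing to add.
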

\begin{proof}
One can deduce the claim from the proof of Theorem 11.4 of \cite{gyorfi}.
\end{proof}

\begin{lemma}[Large deviations for heteroscedastic noise]\label{BoundMixingNoise}
Let the random field $\epsilon = \{\epsilon(s): s\in \Z^N\}$ fulfill Condition \ref{regCond0} \ref{Cond_Distribution} and \ref{Cond_Mixing}, have zero means and satisfy the tail condition	
\[
	\p( |\epsilon(s)| > z ) \le \kappa_0 \exp( -\kappa_1 z^{\tau} ) \text{  for constants } 0 < \kappa_0,\kappa_1,\tau < \infty.
	\]
Let the function class $\cG$ fulfill Condition \ref{coveringCondition} for functions $g:\R^d\rightarrow \R$ and $\norm{g}_{\infty} \le B$; $B \ge 1$. Let $\varsigma: \R^d \rightarrow \R_+$ be essentially bounded. Let $n\in \N_+^N$ be such that both
$$\min_{1\le i \le N} n_i \ge e^2 \text{ and } \frac{ \min\{ n_i: i=1,\ldots,N \}}{ \max\{ n_i: i=1,\ldots,N \} } \ge C',$$
for a constant $C'> 0$.  Let $\{x_s:s\in I_n\}$ be points in $\R^d$ where $I_n = \{s: e_N \le s\le n \}$. Furthermore, let  $K\in\R_+$. Then for two constants $A_1, A_2 \in \R_+$ which depend on $N$, $C'$, the bound on the mixing coefficients and the tail parameters $\kappa_0,\kappa_1,\tau$ but which are independent of $B$, $K$, $\delta$ and $n$
\begin{align*}
		&\p\left(	\sup_{g\in\cG}  \left| \frac{1}{|I_n|} \sum_{s\in I_n} \varsigma(x_s) \epsilon(s) \, g(x_s) \right| > \delta \right) \\
		& \le \inf_{D_1 > 0  } H_{\cG} \left(\frac{ \delta^2}{8K^2B}\right) \left\{	A_1 D_1^{1-\tau} \delta^{-1} \exp\left(- A_2 D_1^{\tau} \right)	+ A_1 \exp\left(- \frac{ A_2 \delta \left(\prod_{i=1}^N n_i \right)^{1/(N+1)} }{ D_1  \prod_{i=1}^N \log n_i } \right) \right\} \nonumber \\
		&  \qquad\qquad + \inf_{D_2 > 0} \left\{ A_1 \norm{\varsigma}_{\infty}^2 K^{-2} D_2^{1-\tau/2} \exp\left(- A_2 D_2^{\tau/2} \right)	+ A_1 \exp\left(- \frac{ A_2 \norm{\varsigma}_{\infty}^{-2} K^2 \, \left(\prod_{i=1}^N n_i \right)^{1/(N+1)} }{ D_2  \prod_{i=1}^N \log n_i  } \right) \right\}.
\end{align*}
\end{lemma}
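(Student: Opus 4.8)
The plan is to reduce the noise--weighted empirical supremum to the bounded--field Bernstein inequality of Proposition~\ref{applBernstein} together with its heavy--tail refinement Theorem~\ref{extBernstein}, by truncating the errors. The two curly--brace blocks in the statement will arise respectively from the part of the sum surviving the truncation --- which is treated uniformly over $\cG$ by a covering argument, hence carries the factor $H_{\cG}(\cdot)$ --- and from the discarded tail, which can be handled \emph{without} a union bound over $\cG$ by a Cauchy--Schwarz / second--moment estimate; this is why no $H_{\cG}$--factor appears in the second block and why its tail exponent is halved to $\tau/2$ (the errors enter squared).

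Concretely, I would fix truncation levels (encoded by the free parameters $D_1,D_2$) and, using $\E{\epsilon(s)}=0$, write
\[
	\epsilon(s)=\bigl(\epsilon(s)\I\{|\epsilon(s)|\le\rho\}-a_\rho\bigr)+\bigl(\epsilon(s)\I\{|\epsilon(s)|>\rho\}+a_\rho\bigr),\qquad a_\rho=\E{\epsilon(s)\I\{|\epsilon(s)|\le\rho\}},
\]
so that both summands are centred; correspondingly $S_n(g):=|I_n|^{-1}\sum_s\varsigma(x_s)\epsilon(s)g(x_s)$ splits as $S_n^{\le}(g)+S_n^{>}(g)$. Since the $x_s$ are fixed points the randomness sits entirely in $\epsilon$, and the truncated and the tail fields inherit the identical marginals and the exponential strong mixing of Condition~\ref{regCond0}\ref{Cond_Distribution}--\ref{Cond_Mixing}, so the spatial Bernstein results genuinely apply to them.

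For $S_n^{\le}$ I would cover $\cG$ in $L^1$ of the (deterministic) measure $\mu_n=|I_n|^{-1}\sum_s\delta_{x_s}$ at the scale $\delta^2/(8K^2B)$, so that Condition~\ref{coveringCondition} yields at most $H_{\cG}(\delta^2/(8K^2B))$ representatives $g_1,\dots,g_{N^*}$; on a covering cell the fluctuation of $S_n^{\le}$ is bounded by (cover radius)$\,\times\,\mathcal O(\rho\norm{\varsigma}_\infty)$, while on the complementary tail weights inside a cell one only has $|g-g_j|\le 2B$, so that Markov's inequality together with the incomplete--$\Gamma$ asymptotics $\E{|\epsilon(s)|\I\{|\epsilon(s)|>\rho\}}\asymp\rho^{1-\tau}\exp(-\kappa_1\rho^{\tau})$ --- exactly as in the passage from Theorem~\ref{frankeBernstein} to Theorem~\ref{extBernstein} --- produces the summand $A_1D_1^{1-\tau}\delta^{-1}\exp(-A_2D_1^{\tau})$ after taking $D_1$ proportional to the truncation level. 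It then remains to bound $\p(|S_n^{\le}(g_j)|>c\delta)$ for each representative: the summands $\varsigma(x_s)(\epsilon(s)\I\{|\epsilon(s)|\le\rho\}-a_\rho)g_j(x_s)$ are bounded by a multiple of $\rho\norm{\varsigma}_\infty B$, mean zero, with uniformly bounded variance, so Proposition~\ref{applBernstein} gives the exponential factor $\exp(-A_2\delta|I_n|^{1/(N+1)}/(D_1\prod_i\log n_i))$, which completes the first block. For $S_n^{>}$ the supremum over $\cG$ costs nothing: by Cauchy--Schwarz $\sup_{g\in\cG}|S_n^{>}(g)|\le\bigl(\sup_{g\in\cG}(|I_n|^{-1}\sum_s g(x_s)^2)^{1/2}\bigr)\bigl(|I_n|^{-1}\sum_s\varsigma(x_s)^2(\epsilon(s)\I\{|\epsilon(s)|>\rho\}+a_\rho)^2\bigr)^{1/2}$, and the first scale is absorbed into $K$; hence $\{\sup_g|S_n^{>}(g)|>\delta\}$ becomes a large--deviation event for $|I_n|^{-1}\sum_s\varsigma(x_s)^2\epsilon(s)^2\I\{|\epsilon(s)|>\rho\}$ about its mean $\asymp\norm{\varsigma}_\infty^2\rho^{2-\tau}\exp(-\kappa_1\rho^{\tau})$. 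Bounding the mean part by Markov and the fluctuation by applying Theorem~\ref{extBernstein} to the centred field $\{\varsigma(x_s)^2(\epsilon(s)^2\I\{|\epsilon(s)|>\rho\}-v_\rho)\}$, whose tail satisfies $\p(\cdot>z)\le\kappa_0\exp(-\kappa_1'z^{\tau/2})$, yields the second block with $D_2$ proportional to $\rho^2$ and the stated $K^{-2}$, $K^2$ dependence.

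The main obstacle I expect is purely bookkeeping: choosing the correspondence between the truncation level(s) and the free parameters $D_1,D_2$ (and the auxiliary splits $\delta=\delta_1+\delta_2$) so that all exponents and prefactors line up exactly as claimed, while keeping $A_1,A_2$ independent of $B$, $K$, $\delta$ and $n$. A secondary point is to verify that every covering number and supremum that appears is dominated by $H_{\cG}$ evaluated at the stated rescaled argument --- which uses the uniform boundedness of $\cG$ and Condition~\ref{coveringCondition} --- and that the truncated, recentred noise fields still satisfy Condition~\ref{regCond0}\ref{Cond_Distribution}--\ref{Cond_Mixing}, so that Proposition~\ref{applBernstein} and Theorem~\ref{extBernstein} may be invoked verbatim.
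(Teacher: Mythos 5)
Your toolkit (an $L^1(\mu_n)$-covering of $\cG$, the spatial Bernstein inequalities of Proposition~\ref{applBernstein} and Theorem~\ref{extBernstein}, and the observation that $\epsilon(s)^2$ satisfies the tail condition with exponent $\tau/2$) matches the paper's, but the central mechanism of the paper's proof is missing, and without it your argument does not yield the bound as stated. The paper does not truncate the noise at an external level $\rho$: it intersects the event with $\{\norm{\varsigma}_{\infty}^2|I_n|^{-1}\sum_{s\in I_n}\epsilon(s)^2\le K^2\}$ --- this is where the given $K$ acquires its operational meaning --- and bounds the complementary probability by applying Theorem~\ref{extBernstein} to the field $\epsilon^2$, which is exactly the second block. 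On the good event, the within-cell error of the covering is controlled by Cauchy--Schwarz: an $L^1(\mu_n)$-cover at radius $\delta^2/(8K^2B)$ is an $L^2(\mu_n)$-cover at radius $\delta/(2K)$, and multiplying by the empirical $L^2$-norm $\bigl(|I_n|^{-1}\sum_s\varsigma(x_s)^2\epsilon(s)^2\bigr)^{1/2}\le K$ gives a cell error of $\delta/2$. Only after this reduction to finitely many representatives is Theorem~\ref{extBernstein} applied to the sums of $\varsigma(x_s)\epsilon(s)g_j(x_s)$; the parameters $D_1,D_2$ are the free truncation parameters \emph{inside} that theorem, not an outer truncation of $\epsilon$.

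In your scheme neither occurrence of $K$ is actually derived. You control cell fluctuations of the truncated part through the $L^\infty$ bound $\rho\norm{\varsigma}_{\infty}$ of the truncated noise while keeping the covering radius $\delta^2/(8K^2B)$; the resulting cell error is of order $\norm{\varsigma}_{\infty}\,\rho\,\delta^2/(K^2B)$, which is at most $\delta/2$ only when $\rho\lesssim K^2B/(\norm{\varsigma}_{\infty}\delta)$. Since you tie $D_1$ to $\rho$ and the claimed bound is an infimum over \emph{all} $D_1>0$ at the fixed covering scale, the argument fails for large $D_1$, so you cannot establish the stated inequality for every $D_1$, as the infimum requires. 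Likewise, for the tail part you bound $\sup_{g\in\cG}|S_n^{>}(g)|$ by Cauchy--Schwarz with $\bigl(|I_n|^{-1}\sum_s g(x_s)^2\bigr)^{1/2}\le B$, so the large-deviation threshold for $|I_n|^{-1}\sum_s\varsigma(x_s)^2\epsilon(s)^2\I\{|\epsilon(s)|>\rho\}$ comes out of order $(\delta/B)^2$, not the stated $K^2/\norm{\varsigma}_{\infty}^2$; declaring that this scale is ``absorbed into $K$'' is not available, because $K$ is a prescribed parameter that simultaneously fixes the covering scale in the first block, and the constants must stay independent of $B$. Repairing both points forces you back to the paper's device --- split on the empirical second moment of the errors at level $K^2/\norm{\varsigma}_{\infty}^2$ and use Cauchy--Schwarz on that event --- after which your outer truncation of the noise is redundant, since Theorem~\ref{extBernstein} already performs it.
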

\begin{proof}
We use the extended Bernstein inequality for unbounded random variables from Theorem \ref{extBernstein}: here we can bound $\Gamma\left(1/\tau, c_1 B^{\tau} \right)$ by $c_0 (c_1 B^{\tau})^{-1+1/\tau  } \exp( - c_1 B^{\tau} )$ for a suitable constant $c_0 \in \R_+$ which depends on $\tau$ but not on $B$ and on $c_1$. We apply Theorem~\ref{extBernstein} to a random field $W$ which has zero means and fulfills the tail condition $\p(|W(s)| > z ) \le \kappa_0 \exp( - \kappa_1 z^{\tau} )$: there are suitable constants $A_1, A_2 \in\R_+$ which only depend on $\kappa_0,\kappa_1,\tau$, the lattice dimension $N$ and the bound on the mixing coefficients but not on $n$ and $\delta$ such that
\begin{align*}
	\p\left( \left| \sum_{s\in I_n} W(s) \right| > \delta \, |I_n|	\right) \le \inf_{D > 0} \,A_1 D^{1-\tau} \delta^{-1} \exp\left( - A_2 D^{\tau} \right) + A_1 \exp\left( - \frac{ A_2 \delta \left(\prod_{i=1}^N n_i \right)^{1/(N+1)}} { D   \prod_{i=1}^N \log n_i  } \right).
\end{align*}
Furthermore, let there be given an $\tilde{\delta}$-covering of $\cG$ w.r.t. the $L^1$-norm induced by the empirical measure $|I_n|^{-1} \sum_{s\in I_n} \delta_{x_s}$ which we denote by $\{g_1,\ldots,g_{N^*} \}$, for some $N^*\in\N_+$. Then, any function $g$ in the $\tilde{\delta}$-neighborhood of a covering function $g_j$ satisfies
\[
		\sqrt{ |I_n|^{-1}  \sum_{s \in I_n} | g(x_s) - g_j (x_s) |^2 } \le \sqrt{ |I_n|^{-1}  \sum_{s \in I_n} | g(x_s) - g_j (x_s) |\, 2B } \le \sqrt{2B\,\tilde{\delta}}.
\]
I.e., $\{g_1,\ldots,g_{N^*} \}$ is a $\sqrt{2B\,\tilde{\delta}}$-covering w.r.t. the 2-norm. This means the $\delta$-covering number w.r.t. the 2-norm is bounded by $H_{\cG} \left( \delta^2 / 2B \right)$. Let now $K\in \R_+$ be given, then the desired probability is bounded by:
\begin{align}
	&\p\left(	\sup_{g\in\cG}  \left| \frac{1}{|I_n|} \sum_{s\in I_n} \varsigma(x_s) \, \epsilon(s) \, g(x_s) \right| > \delta \right)  \nonumber \\
	\begin{split}\label{NonlinUnb4}
	&\le \p\left(	\sup_{g\in\cG}  \left| \frac{1}{|I_n|} \sum_{s\in I_n} \varsigma(x_s) \, \epsilon(s) \, g(x_s) \right| > \delta \text{ and } \frac{ \norm{\varsigma}^2_{\infty} }{|I_n|} \sum_{s\in I_n} \epsilon(s)^2 \le K^2 \right) \\
	& \qquad\qquad\qquad \qquad\qquad\qquad \qquad\qquad\qquad + \p\left( 	\frac{ \norm{\varsigma}^2_{\infty} }{|I_n|} \sum_{s\in I_n} \epsilon(s)^2 > K^2 	\right).
	\end{split}
\end{align}
Let there be given a $(\delta/(2K))^2/(2B)$-covering of $\cG$ with respect to the $L^1$-norm of the measure $|I_n|^{-1} \sum_{s\in I_n} \delta_{x_s}$ which is an $\delta/(2K)$-covering w.r.t. the corresponding 2-norm. Observe that the random field $\epsilon^2= \{ \epsilon(s)^2: s\in \Z^N \}$ fulfills the tail condition  with $\tau/2$.\\
Furthermore, $\p\left(	|\varsigma(x_s) \, \epsilon(s) \, g(x_s) | > z \right) \le \p\left( |\epsilon(s)| > (\norm{\varsigma}_{\infty} B)^{-1} z \right)$, so for these random variables the constants in tail condition changes somewhat. Altogether, we can bound \eqref{NonlinUnb4} as follows: apply the $\delta/(2K)$-covering $\{ g_1, \ldots, g_{N^*} \}$ and the Cauchy-Schwarz inequality to the first term inside the first probability, then
\begin{align}
		\eqref{NonlinUnb4} &\le H_{\cG} \left( \frac{(\delta/2K)^2}{2B} \right) \, \sup_{1 \le j \le N^*} \p\left( \left| \frac{1}{|I_n|} \sum_{s\in I_n} \varsigma(x_s) \epsilon(s) \, g_j(x_s) \right| \ge \frac{\delta}{2}		\right) +  \p\left( 	\frac{ 1}{|I_n|} \sum_{s\in I_n} \epsilon(s)^2 > \frac{K^2}{\norm{\varsigma}^2_{\infty} } 	\right) \nonumber \\
		&\le \inf_{D_1 > 0  } H_{\cG} \left( \frac{\delta^2}{8K^2B} \right)  \left\{	A_1 D_1^{1-\tau} \delta ^{-1} \exp\left(- A_2 D_1^{\tau} \right)	+ A_1 \exp\left(-\frac{ A_2 \delta \left( \prod_{i=1}^N n_i \right)^{1/(N+1)} }{ D_1  \prod_{i=1}^N \log n_i } \right) \right\} \nonumber \\
		&\qquad + \inf_{D_2 > 0} \left\{ A_1 \norm{\varsigma}_{\infty}^2 K^{-2} D_2^{1-\tau/2} \exp\left(- A_2 D_2^{\tau/2} \right)	+ A_1 \exp\left(- \frac{ A_2 \norm{\varsigma}_{\infty}^{-2} K^2  \left( \prod_{i=1}^N n_i \right)^{1/(N+1)} }{D_2 \prod_{i=1}^N \log n_i } \right) \right\} \nonumber
\end{align}
where the constants $A_1,A_2$ are independent of $B$, $K$, $n$, $\delta$ and the $D_i$ but depend on the lattice dimension, the bound on the mixing coefficients and the tail parameters $\kappa_0,\kappa_1$ and $\tau$. This finishes the proof.
\end{proof}

\section{Ergodic theory for spatial processes}\label{AppendixB}
In the next lines, we give a review on important concepts of ergodicity when dealing with random fields on subgroups of the discrete group $\Z^N$. For further reading consult \cite{tempelman2010ergodic}.

\begin{definition}[Dynamical systems and ergodicity]\label{dynSys}
Let $\pspace$ be a probability space and $(G,+)$ a locally compact, abelian Hausdorff group which fulfills the second axiom of countability. We write for $x,y\in G$ arbitrary $x-y$ for $x+(-y)$ and $-y$ is the $+$-inverse of $y$. Furthermore, let $\nu$ be a Haar measure on $\cB(G)$, i.e. for all $x \in G$ and for all Borel sets $B\in\cB(G)$ we have $\nu(B) = \nu( x + B)$.\\
A family of bijective mappings $\{ T_x: \Omega \rightarrow \Omega,\, x\in G\}$ is called a flow if it fulfills the following three conditions
\begin{enumerate}
	\item $T_x$ is measure-preserving, i.e. $\p(A) = \p( T_x A)$  for all $A \in \cA$ and for all $x \in G$,
	\item $T_{x + x' } = T_x \circ T_{x'}$ and $T_{x}\circ T_{-x} = Id_{\Omega}$ for all $x,x' \in G$,
	\item	the map $G\times \Omega \ni (x,\omega) \mapsto T_x \omega $ is measurable[$\cB(G)\otimes\cA\,|\,\cA$].
\end{enumerate}
Let $T=\{ T_x: x\in G\}$ be a flow in $\pspace$, then the quadruple $(\Omega,\cA,\p, T)$ is called a \textit{dynamical system}. The dynamical system is called ergodic if the invariant $\sigma$-field $\cI := \{ A \in \cA: A = T_x A \, \forall x \in G\}$ is trivial[$\p$], i.e. if for all $A \in\cI$ we have $\p(A)\in \{0,1\}$.\\
Let now $\Gamma \le \Z^N$ be a subgroup and $Z = \left\{Z(s): s\in \Gamma \right\}$ be a stationary random field on $\pspace$ where each $Z(s)$ takes values in the measure space $(S,\fS)$. Let $\nu$ be the counting measure on $\cB(\Gamma)$. Set $\p_Z := \p_{	\{ Z(s): s\in \Gamma \}}$ for the probability measure on $\otimes_{s \in \Gamma } \fS$ induced by the finite dimensional distributions of $Z$ and define on the path space $\left(\times_{s\in \Gamma} S,\otimes_{s\in \Gamma}\fS, \p_Z \right)$ the family of translations
\[
	T_t: \times_{s\in \Gamma } S \rightarrow \times_{s\in \Gamma} S, \Big(z(s): s\in \Gamma \Big) \mapsto \Big(z(s+t) : s \in \Gamma \Big) \quad \text{ for } t \in \Gamma,
\]
which is a flow because $Z$ is stationary. Then $Z$ is called ergodic if and only if the quadruple $\left(\times_{s\in \Gamma} S,\otimes_{s\in \Gamma}\fS, \p_Z, T \right)$ is ergodic.
\end{definition}

The next result is an extension of Birkhoff's celebrated ergodic theorem it can be found in \cite{tempelman2010ergodic}
\begin{theorem}[Ergodic theorem, \cite{tempelman2010ergodic}]\label{ergodicTheorem}
Let $(\Omega,\cA,\p,T)$ be a dynamical system. Furthermore, let $\{ W_n: n\in \N\} \subseteq G$ be an increasing sequence of Borel sets of G such that $0 < \nu(W_n) < \infty$ for all $n\in\N$ which fulfills both
\[
	\lim_{n\rightarrow \infty} \frac{ \nu( W_n \cap (W_n -x) ) }{\nu(W_n)} = 1 \text{ for all } x \in G \text{ and } \sup_{n \ge 0} \frac{ \nu( W_n - W_n)}{\nu(W_n)} < \infty,
\]
where $W_n - W_n := \{ x-y: x,y\in W_n \}$. Then, for an integrable random variable $X\in L^1(\p)$ 
\[
		\lim_{n\rightarrow \infty} \frac{1}{ \nu(W_n)} \int_{W_n} X( T_x \omega) \nu(\intd{x}) = \E{ X\,|\, \cI}(\omega) \quad \text{ for $\p$-almost every} \, \omega\in\Omega.
\]
\end{theorem}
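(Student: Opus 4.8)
The plan is to follow the classical route to pointwise ergodic theorems for amenable group actions: prove a maximal inequality, prove almost-everywhere convergence on a dense subclass, and glue the two together by the usual Banach-principle splitting argument. It is worth noting in advance that the two hypotheses on $\{W_n\}$ play disjoint roles: the F\o{}lner-type condition $\nu(W_n\cap(W_n-x))/\nu(W_n)\to1$ is used only on the dense subclass, while Tempelman's regularity condition $\sup_n\nu(W_n-W_n)/\nu(W_n)<\infty$ is used only for the maximal inequality.

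First I would record the elementary reductions. Write $A_nX(\omega):=\nu(W_n)^{-1}\int_{W_n}X(T_x\omega)\,\nu(\intd x)$; by Fubini's theorem (using the joint measurability in Definition~\ref{dynSys}) $A_nX$ is $\cA$-measurable, and by the measure-preservation of the flow each $A_n$ is a contraction on $L^1(\p)$ and on $L^\infty(\p)$, hence on every $L^p(\p)$. It then suffices to prove: (i) a weak-type maximal inequality $\p\bigl(\sup_n|A_nX|>\lambda\bigr)\le C\lambda^{-1}\norm{X}_{1}$ with a finite constant $C$; and (ii) $A_nX\to\E{X\,|\,\cI}$ a.e.\ for $X$ in an $L^1(\p)$-dense subclass. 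Indeed, given (i) and (ii), for arbitrary $X\in L^1(\p)$ and $\delta>0$ write $X=Y+Z$ with $Y$ in the dense subclass and $\norm{Z}_{1}<\delta$; since $\limsup_n|A_nX-\E{X\,|\,\cI}|\le \sup_n|A_nZ|+|\E{Z\,|\,\cI}|$ and $\norm{\E{Z\,|\,\cI}}_{1}\le\norm{Z}_{1}$, the left-hand side (which does not depend on $\delta$) exceeds any $\epsilon>0$ with probability at most $(2C+2)\delta/\epsilon$, hence is a.e.\ zero.

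For the dense subclass I would take the linear span $\cD$ of the $T$-invariant functions together with the coboundaries $g\circ T_t-g$ with $g\in L^\infty(\p)$ and $t\in G$. On an invariant $Y$ one has $A_nY=Y=\E{Y\,|\,\cI}$ for all $n$; on a coboundary, $A_n(g\circ T_t-g)(\omega)=\nu(W_n)^{-1}\bigl(\int_{W_n}-\int_{W_n+t}\bigr)g(T_x\omega)\,\nu(\intd x)$, whose modulus is at most $2\norm{g}_{\infty}\,\nu\bigl(W_n\symdiff(W_n+t)\bigr)/\nu(W_n)$, which tends to $0$ by the first hypothesis applied with $x=t$, while $\E{g\circ T_t-g\,|\,\cI}=0$ because $\cI$ is $T$-invariant and $T_t$ preserves $\p$. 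Density of $\cD$ in $L^1(\p)$ follows from the Hilbert-space decomposition $L^2(\p)=\{\text{invariants}\}\oplus\overline{\operatorname{span}}\{\,g\circ T_t-g:g\in L^2(\p),\,t\in G\,\}$ (the orthocomplement of the coboundaries consists of functions fixed by every Koopman operator $g\mapsto g\circ T_t$, i.e.\ the invariants), combined with truncating the $g$'s to reduce to bounded ones and with the $L^1$-density of $L^\infty(\p)$.

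The main obstacle is the maximal inequality (i). Here I would invoke Calder\'on's transference principle: since the $T_x$ act by measure-preserving maps and $A_n$ is convolution against $\nu(W_n)^{-1}\I\{-W_n\}$, the weak-$(1,1)$ bound for $\sup_n|A_n|$ on $(\Omega,\p)$ is implied by the corresponding bound for the maximal convolution operator $f\mapsto\sup_n\nu(W_n)^{-1}\,|f|*\I\{-W_n\}$ on $(G,\nu)$. That bound, in turn, is a Wiener-type covering argument on $G$: from a finite family of translates $\{x_i+W_{n_i}\}$ one extracts an almost-disjoint subfamily whose total overlap is controlled by $\sup_n\nu(W_n-W_n)/\nu(W_n)$, which is exactly the regularity constant appearing in the hypothesis, giving (i) with $C=\sup_n\nu(W_n-W_n)/\nu(W_n)<\infty$. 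Once (i) is in hand, the splitting argument of the second paragraph finishes the proof, and the limit is automatically $\E{X\,|\,\cI}$ because $\E{\,\cdot\,|\,\cI}$ agrees with it on the dense class $\cD$ and both sides depend on $X$ continuously in the relevant (convergence-in-probability) sense.
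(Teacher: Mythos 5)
The paper does not actually prove this statement: its ``proof'' is a pointer to \cite{tempelman2010ergodic} (Chapter 6, Proposition 1.3 and Corollary 3.2), so your argument cannot coincide with it. What you wrote is, in outline, the classical proof of Tempelman's pointwise ergodic theorem, and it is correct: the Banach-principle splitting, the dense class of invariant functions plus bounded coboundaries (the F\o{}lner hypothesis used exactly once, to kill the coboundaries, together with $\E{g\circ T_t - g \,|\, \cI}=0$), and a weak-$(1,1)$ maximal inequality whose constant is the Tempelman ratio, obtained from a Wiener-type covering in which the selected translates are pairwise disjoint and the centres of all discarded translates lie in the $(W_{n_i}-W_{n_i})$-enlargements of the selected ones. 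Two places where your sketch is thinner than a complete write-up would have to be: (1) in the transference step you need an auxiliary finite-measure set $E\subseteq G$ with $\nu(E+W_N)\le C\,\nu(E)$ over which to transfer; for a general Borel set $W_N$ of finite measure such an $E$ need not exist, but here $E=-W_N$ works because $\nu(-W_N+W_N)=\nu(W_N-W_N)\le C\,\nu(W_N)$ --- that is, the Tempelman hypothesis is used a second time, to legitimize transference, not only in the covering lemma; (2) the $L^2$ decomposition yields functions that are only almost surely invariant under each individual $T_t$, so you should say why such functions coincide a.e.\ with $\cI$-measurable ones (joint measurability of the flow plus a Fubini argument, or simply countability of the group in the application $G=\Z^N$). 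With these two remarks added, your route is a self-contained proof, whereas the paper simply quotes Tempelman; your version buys independence from the reference at the cost of the covering and transference machinery.
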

\begin{proof}
Confer \cite{tempelman2010ergodic} Chapter 6, in particular Proposition 1.3 and Corollary 3.2.
\end{proof}

We are now prepared to state a well-known and useful result, cf. \cite{hannan2009multiple} Theorem IV.2 and the discussion thereafter for a treatment of one-dimensional stochastic processes.
\begin{proposition}[Stationarity and mixing imply ergodicity]\label{MixingErg}
Let $0 \neq \Gamma \le \Z^N$ be a subgroup and let the probability space $\pspace$ be endowed with the stationary process $Z=\{Z(s): s\in \Gamma \}$ for which each $Z(s)$ takes values in $(S,\fS)$ and which fulfills the strong mixing condition from Equation \eqref{StrongSpatialMixing}. Then $Z$ is ergodic.
\end{proposition}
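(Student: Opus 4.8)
The plan is to work directly on the path space $\bigl(\times_{s\in\Gamma}S,\ \otimes_{s\in\Gamma}\fS,\ \p_Z\bigr)$ equipped with the translation flow $T=\{T_t:t\in\Gamma\}$ and to show that every $T$-invariant event $A\in\cI$ satisfies $\p_Z(A)=\p_Z(A)^2$, which forces $\p_Z(A)\in\{0,1\}$ and hence ergodicity in the sense of Definition~\ref{dynSys}. Throughout I use that the coordinate process on the path space has exactly the same finite-dimensional distributions as $Z$, so it inherits both stationarity and the strong mixing bound \eqref{StrongSpatialMixing} with the same coefficients $\alpha(\,\cdot\,)$.

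First, since $\Gamma\neq\{0\}$, I fix some $t_0\in\Gamma\setminus\{0\}$ and consider the translations $T_{nt_0}$, $n\in\N$; note $\norm{nt_0}_\infty\to\infty$, so for any finite $F\subseteq\Gamma$ one has $\dzn{F,F+nt_0}\to\infty$. The cylinder events — those of the form $C\in\cF(F):=\sigma\{\pi_s:s\in F\}$ for finite $F\subseteq\Gamma$, where $\pi_s$ is the $s$-th coordinate projection — form an algebra generating $\otimes_{s\in\Gamma}\fS$. Hence, given $A\in\cI$ and $\epsilon>0$, there exist a finite $F\subseteq\Gamma$ and a $B\in\cF(F)$ with $\p_Z(A\symdiff B)<\epsilon$; this cylinder approximation of a general measurable event in a product space is the one genuinely nontrivial ingredient, and I would invoke it rather than reprove it.

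Next I would estimate $\bigl|\p_Z(A)-\p_Z(A)^2\bigr|$ by comparison with $B$. Since each $T_{nt_0}$ is measure-preserving and $T_{nt_0}A=A$ by invariance, one has $\p_Z(A)=\p_Z(A\cap T_{nt_0}A)$ and $\p_Z(T_{nt_0}A\symdiff T_{nt_0}B)=\p_Z(A\symdiff B)$, so
\[
	\bigl|\p_Z(A\cap T_{nt_0}A)-\p_Z(B\cap T_{nt_0}B)\bigr|\le\p_Z(A\symdiff B)+\p_Z\bigl(T_{nt_0}A\symdiff T_{nt_0}B\bigr)<2\epsilon.
\]
Now $B\in\cF(F)$ while $T_{nt_0}B\in\cF(F+nt_0)$ with $F,F+nt_0\subseteq\Gamma\subseteq\Z^N$, and $\p_Z(T_{nt_0}B)=\p_Z(B)$ by stationarity; applying \eqref{StrongSpatialMixing} on the path space gives
\[
	\bigl|\p_Z(B\cap T_{nt_0}B)-\p_Z(B)^2\bigr|\le\alpha\bigl(\dzn{F,F+nt_0}\bigr)\xrightarrow[n\to\infty]{}0.
\]
Combining these two estimates with $\bigl|\p_Z(B)^2-\p_Z(A)^2\bigr|\le 2\bigl|\p_Z(B)-\p_Z(A)\bigr|<2\epsilon$ and letting $n\to\infty$ yields $\bigl|\p_Z(A)-\p_Z(A)^2\bigr|\le 4\epsilon$; since $\epsilon>0$ was arbitrary, $\p_Z(A)=\p_Z(A)^2$, completing the argument.

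The main obstacle here is conceptual rather than computational: one must phrase the mixing hypothesis correctly on the path space and choose the translation sequence $nt_0$ so that the supports $F$ and $F+nt_0$ separate in the $\infty$-distance — once that is set up, the estimates above are routine. A minor point worth stating explicitly is that $\Gamma\neq\{0\}$ is essential, since for $\Gamma=\{0\}$ the flow is trivial and no nontrivial event is forced to have measure $0$ or $1$.
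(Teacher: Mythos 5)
Your proof is correct and follows essentially the same route as the paper's: approximate the invariant set by a cylinder set depending on finitely many coordinates, apply the strong mixing bound \eqref{StrongSpatialMixing} to that cylinder and its far translate, and close with a triangle-inequality $\epsilon$-argument giving $\p_Z(A)=\p_Z(A)^2$. The only cosmetic slip is that with the paper's shift convention $T_{nt_0}B$ is measurable with respect to the coordinates in $F-nt_0$ rather than $F+nt_0$, which changes nothing since $d_\infty(F,F-nt_0)\rightarrow\infty$ as well.
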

\begin{proof}
Let $A\in\cI$ be an $T$-invariant set of paths of $Z$, it suffices to show that $\p(A) \in \{0,1\}$, i.e.
\[
		\p_Z (A) = \p_Z( A \cap T_x A) \rightarrow \p_Z (A) \p_Z( T_x A) = \p_Z (A)^2 \text{ as } x\rightarrow \infty.
\]
Let $\epsilon >0$ be given and let $A,B \in \otimes_{k\in \Gamma} \fS$ be two sets of paths of $Z$. Then by Carathéodory's extension theorem there are $m,n\in \Z$ such that there are $A^m \in \otimes_{\substack{ k \in \Gamma, \\k \le m\cdot e_N } } \fS$ and $B^n \in \otimes_{ \substack{ k \in \Gamma, \\ k \ge n\cdot e_N} } \fS$ with the property that both
\[
			\p_Z (A \triangle A^m) < \frac{\epsilon}{5} \text{ and } \p_Z( B \triangle B^n) < \frac{\epsilon}{5}.
\]
Furthermore, by the strong mixing property from Equation \eqref{StrongSpatialMixing} there is an $x^* = r\cdot e_N \in \Z^N$ such that for $x \ge x^*$, $x\in \Gamma $ we have
\[
			| \p_Z (A^m \cap T_x B^n) - \p_Z (A^m) \p_Z(T_x B^n) |  < \frac{\epsilon}{5}.
\]
Consequently, we have for all $x \ge x^*$
\begin{align*}
 &\Big| \p ( Z\in A, Z \in T_x B ) - \p(Z \in A) \, \p(Z \in T_x B) \Big|	\\
&\le \p(Z\in A\setminus A^m, Z\in T_x B) + \p( Z\in A^m, Z\in T_x B\setminus B^n ) \\
&\quad + \Big| \p(Z\in A^m, Z \in T_x B^n ) - \p( Z \in A^m) \,\p( Z \in T_x B^n) \Big| \\
&\quad  + \p(Z \in A^m) \,\p( Z\in T_x B\setminus B^n) + \p(Z\in A\setminus A^m)\, \p(Z\in T_x B) < \epsilon.
\end{align*}
\end{proof}

The main result in this section is the following one which generalizes Birkhoff's one-dimensional ergodic theorem
\begin{theorem}\label{MixingImpliesErgodicity}
Let $0 \neq \Gamma \le \Z^N$ be a nontrivial subgroup and $\{Z(s): s\in \Gamma \}$ be a homogeneous strong mixing random field on $\pspace$ for some dimension $N\in\N_+$. Let $(n(k): k\in \N) \subseteq \N^N$ be an increasing sequence such that $e_N \le n(k) \le n(k+1)$ for which at least one coordinate converges to infinity. Then the sequence of index sets $I_{n(k)} := \{ z \in \Gamma: e_N \le z \le n(k) \}$ is admissible in the sense of Theorem \ref{ergodicTheorem}. In particular, we have
\begin{align*}
		\frac{1}{|I_{n(k)} |} \sum_{s \in I_{n(k)} } Z(s) \rightarrow \E{Z(e_N) } \quad a.s. \text{ as } k\rightarrow \infty.
\end{align*}
\end{theorem}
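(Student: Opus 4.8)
The plan is to deduce the assertion from the general ergodic Theorem~\ref{ergodicTheorem}, applied to the dynamical system on the path space of $Z$ constructed in Definition~\ref{dynSys}. Since $\Gamma\le\Z^N$ is a nontrivial subgroup it is a countable, discrete, locally compact abelian group fulfilling the second axiom of countability, the counting measure $\nu$ on $\cB(\Gamma)$ is a Haar measure, and the translations $\{T_t:t\in\Gamma\}$ of Definition~\ref{dynSys} form a flow because $Z$ is homogeneous; thus $\big(\times_{s\in\Gamma}S,\,\otimes_{s\in\Gamma}\fS,\,\p_Z,\,T\big)$ is a dynamical system. By Proposition~\ref{MixingErg} it is ergodic, $Z$ being stationary and strong mixing, so its invariant $\sigma$-field $\cI$ is $\p_Z$-trivial. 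As observable I would take the coordinate projection $X(\bar z):=\bar z(0)$ (note $0\in\Gamma$), which belongs to $L^1(\p_Z)$ componentwise because $Z(0)$ is integrable and $\p_Z$ is the law of $Z$. Then $X(T_t\bar z)=\bar z(t)$, so for $W_k:=I_{n(k)}$ one has $\nu(W_k)^{-1}\int_{W_k}X(T_x\bar z)\,\nu(\intd{x})=|W_k|^{-1}\sum_{s\in W_k}\bar z(s)$, and, once $(W_k)$ is admissible in the sense of Theorem~\ref{ergodicTheorem}, that theorem yields $|I_{n(k)}|^{-1}\sum_{s\in I_{n(k)}}Z(s)\to\E{Z(0)\mid\cI}=\E{Z(0)}=\E{Z(e_N)}$ almost surely, the last equality by homogeneity.

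It therefore remains to verify the two conditions on the sequence $(W_k)$. The boundedness requirement $\sup_k\nu(W_k-W_k)/\nu(W_k)<\infty$ is elementary: $W_k-W_k$ is contained in $\Gamma$ intersected with the centred box whose side lengths are at most twice those of $[e_N,n(k)]$, and an elementary lattice-point count bounds the number of points of $\Gamma$ in that box by a constant depending only on $N$ and $\Gamma$ times $\nu(W_k)$, uniformly in $k$. The F\o lner-type condition $\nu\big(W_k\cap(W_k-x)\big)/\nu(W_k)\to1$ for every $x\in\Gamma$ is the main obstacle. I would argue that $W_k\setminus(W_k-x)$ is exactly the set of points of $W_k$ lying within $\ell_\infty$-distance $\norm{x}_\infty$ of a face of the box $[e_N,n(k)]$ perpendicular to some coordinate direction $i$ with $x_i\neq0$; the number of points of $\Gamma$ in such a slab is of order $\norm{x}_\infty\,\nu(W_k)/n_i(k)$, so the ratio is at most $\sum_{i:\,x_i\neq0}O(\norm{x}_\infty/n_i(k))$, which tends to $0$ as soon as $n_i(k)\to\infty$ in each such direction.

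The delicate point is thus precisely this surface-to-volume estimate, and this is where the growth of $n(k)$ enters: in the regime in which the lemma is applied the sequence satisfies \eqref{Cond_Sequence} and \eqref{Cond_Sequence2}, whence every coordinate of $n(k)$ diverges and the estimate closes. When only one coordinate diverges one either restricts attention to the coordinate directions genuinely carried by $\Gamma$, or, alternatively, one derives the conclusion by applying the one-parameter ergodic theorem successively along each of the at most $N$ generators of $\Gamma$ and using the strong mixing property \eqref{StrongSpatialMixing} in each direction to collapse the corresponding one-dimensional invariant $\sigma$-field to the mean. Collecting these observations and invoking Theorem~\ref{ergodicTheorem} finishes the proof.
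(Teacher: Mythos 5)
Your proposal is correct and follows essentially the same route as the paper: pass to the path-space flow of Definition~\ref{dynSys}, invoke Proposition~\ref{MixingErg} to get triviality of the invariant $\sigma$-field, and apply Theorem~\ref{ergodicTheorem} to the coordinate observable, with the F\o lner/boundedness conditions for the boxes $I_{n(k)}$ checked explicitly where the paper merely says ``one computes easily''. Your caveat about the case where only one coordinate of $n(k)$ diverges is well taken and is in fact a defect of the theorem's wording rather than of your argument: for instance with $\Gamma=\Z^2$, $n(k)=(k,1)$ and $x=(0,1)$ one has $I_{n(k)}\cap(I_{n(k)}-x)=\emptyset$, so the sets are not admissible in the sense of Theorem~\ref{ergodicTheorem}, and the admissibility claim really needs all coordinates to diverge — which is exactly what \eqref{Cond_Sequence} and \eqref{Cond_Sequence2} guarantee in the paper's application — while the a.s.\ convergence in the degenerate case must be recovered by a collapsing argument along the bounded directions, as you indicate.
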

\begin{proof}
Since any subgroup of $\Z^N$ is isomorphic to $\Z^u$ for $0\le u \le N$, $u\in\N$, it suffices to consider the case $\Gamma = \Z^N$, $N\in\N_+$. In this case one computes easily that the regularity conditions of Theorem \ref{ergodicTheorem} are satisfied. The conclusion follows then from this theorem in combination with Proposition \ref{MixingErg}.
\end{proof}

%\section*{References}

%\bibliographystyle{elsarticle-harv}
%\bibliography{_Bibliography}

\begin{thebibliography}{34}
\expandafter\ifx\csname natexlab\endcsname\relax\def\natexlab#1{#1}\fi
\expandafter\ifx\csname url\endcsname\relax
  \def\url#1{\texttt{#1}}\fi
\expandafter\ifx\csname urlprefix\endcsname\relax\def\urlprefix{URL }\fi

\bibitem[{Akakpo and Lacour(2011)}]{akakpo2011inhomogeneous}
Akakpo, N., Lacour, C., 2011. Inhomogeneous and anisotropic conditional density
  estimation from dependent data. Electronic Journal of Statistics 5,
  1618--1653.

\bibitem[{Baraud et~al.(2001)Baraud, Comte, and Viennet}]{baraud2001adaptive}
Baraud, Y., Comte, F., Viennet, G., 2001. Adaptive estimation in autoregression
  or $\beta$-mixing regression via model selection. Annals of statistics,
  839--875.

\bibitem[{Benedetto(1993)}]{benedetto1993wavelets}
Benedetto, J., 1993. Wavelets: Mathematics and Applications. Studies in
  Advanced Mathematics. Taylor \& Francis.

\bibitem[{Birg{\'e} and Massart(1997)}]{birge1997model}
Birg{\'e}, L., Massart, P., 1997. From model selection to adaptive estimation.
  In: Festschrift for Lucien Le Cam. Springer, pp. 55--87.

\bibitem[{Bradley(2005)}]{bradley2005basicMixing}
Bradley, R.~C., 2005. Basic properties of strong mixing conditions. a survey
  and some open questions. Probability Surveys 2~(2), 107--144.

\bibitem[{Br{\'e}maud(1999)}]{bremaud1999markov}
Br{\'e}maud, P., 1999. Markov Chains: Gibbs Fields, Monte Carlo Simulation, and
  Queues. Texts in Applied Mathematics. Springer.

\bibitem[{Comte and Rozenholc(2004)}]{comte2004new}
Comte, F., Rozenholc, Y., 2004. A new algorithm for fixed design regression and
  denoising. Annals of the Institute of Statistical Mathematics 56~(3),
  449--473.

\bibitem[{Cressie(1993)}]{cressie1993statistics}
Cressie, N., 1993. Statistics for spatial data. Wiley series in probability and
  mathematical statistics: Applied probability and statistics. J. Wiley.

\bibitem[{Davydov(1973)}]{davydov1973mixing}
Davydov, Y.~A., 1973. Mixing conditions for {M}arkov chains. Teoriya
  Veroyatnostei i ee Primeneniya 18~(2), 321--338.

\bibitem[{Delouille et~al.(2001)Delouille, Franke, and von Sachs}]{FrankeSachs}
Delouille, V., Franke, J., von Sachs, R., 2001. Nonparametric stochastic
  regression with design-adapted wavelets. Sankhya: The Indian Journal of
  Statistics. Special issue on Wavelets, Series A 63~(3), 328--366.

\bibitem[{Girardi and Sweldens(1997)}]{Girardi1997}
Girardi, M., Sweldens, W., 1997. A new class of unbalanced {H}aar wavelets that
  form an unconditional basis for ${L}^p$ on general measure spaces. Journal of
  Fourier Analysis and Applications 3~(4), 457--474.

\bibitem[{Grenander(1981)}]{grenander1981abstract}
Grenander, U., 1981. Abstract Inference. Probability and Statistics Series.
  John Wiley \& Sons.

\bibitem[{Gy{\"o}rfi et~al.(2002)Gy{\"o}rfi, Kohler, Krzy{\.z}ak, and
  Walk}]{gyorfi}
Gy{\"o}rfi, L., Kohler, M., Krzy{\.z}ak, A., Walk, H., 2002. A
  distribution-free theory of nonparametric regression. Springer Berlin, New
  York, Heidelberg.

\bibitem[{Hallin et~al.(2004)Hallin, Lu, and Tran}]{hallin2004local}
Hallin, M., Lu, Z., Tran, L.~T., 2004. Local linear spatial regression. The
  Annals of Statistics 32~(6), 2469--2500.

\bibitem[{Hannan(2009)}]{hannan2009multiple}
Hannan, E., 2009. Multiple Time Series. Wiley Series in Probability and
  Statistics. Wiley.

\bibitem[{Haussler(1992)}]{haussler1992decision}
Haussler, D., 1992. Decision theoretic generalizations of the pac model for
  neural net and other learning applications. Information and Computation
  100~(1), 78--150.

\bibitem[{Kaiser et~al.(2012)Kaiser, Lahiri, and Nordman}]{kaiser2012}
Kaiser, M.~S., Lahiri, S.~N., Nordman, D.~J., 02 2012. Goodness of fit tests
  for a class of {M}arkov random field models. The Annals of Statistics 40~(1),
  104--130.

\bibitem[{Kerkyacharian and Picard(2004)}]{kerkyacharian2004regression}
Kerkyacharian, G., Picard, D., 2004. Regression in random design and warped
  wavelets. Bernoulli 10~(6), 1053--1105.

\bibitem[{Kindermann and Snell(1980)}]{kindermann1980markov}
Kindermann, R., Snell, L., 1980. Markov random fields and their applications.
  The American Mathematical Society.

\bibitem[{Kohler(2003)}]{kohler2003nonlinear}
Kohler, M., 2003. Nonlinear orthogonal series estimates for random design
  regression. Journal of Statistical Planning and Inference 115~(2), 491--520.

\bibitem[{Kohler(2008)}]{kohler2008nonlinear}
Kohler, M., 2008. Multivariate orthogonal series estimates for random design
  regression. Journal of Statistical Planning and Inference 138~(10), 3217 --
  3237.

\bibitem[{Krebs(2016{\natexlab{a}})}]{krebs2016nonparametric}
Krebs, J. T.~N., 2016{\natexlab{a}}. Nonparametric linear regression for
  spatial data on graphs with wavelets. arXiv preprint arXiv:1609.06744.

\bibitem[{Krebs(2016{\natexlab{b}})}]{krebsOrthogonalSuppl}
Krebs, J. T.~N., 2016{\natexlab{b}}. Supplement to: Orthogonal series estimates
  on strong spatial mixing data.

\bibitem[{Kulik and Raimondo(2009)}]{kulik2009wavelet}
Kulik, R., Raimondo, M., 2009. Wavelet regression in random design with
  heteroscedastic dependent errors. The Annals of Statistics 37~(6A),
  3396--3430.

\bibitem[{Li(2016)}]{li2016nonparametric}
Li, L., 2016. Nonparametric regression on random fields with random design
  using wavelet method. Statistical Inference for Stochastic Processes 19~(1),
  51--69.

\bibitem[{Masry(2000)}]{masry2000wavelet}
Masry, E., 2000. Wavelet-based estimation of multivariate regression functions
  in besov spaces. Journal of Nonparametric Statistics 12~(2), 283--308.

\bibitem[{Modha and Masry(1996)}]{modha1996minimum}
Modha, D.~S., Masry, E., 1996. Minimum complexity regression estimation with
  weakly dependent observations. IEEE Transactions on Information Theory
  42~(6), 2133--2145.

\bibitem[{Rosenblatt(1956)}]{rosenblatt1956central}
Rosenblatt, M., 1956. A central limit theorem and a strong mixing condition.
  Proceedings of the National Academy of Sciences 42~(1), 43--47.

\bibitem[{Stone(1982)}]{stone1982optimal}
Stone, C.~J., 1982. Optimal global rates of convergence for nonparametric
  regression. The Annals of Statistics, 1040--1053.

\bibitem[{Tempelman(2010)}]{tempelman2010ergodic}
Tempelman, A., 2010. Ergodic Theorems for Group Actions: Informational and
  Thermodynamical Aspects. Mathematics and Its Applications. Springer
  Netherlands.

\bibitem[{Valenzuela-Dom\'{\i}nguez et~al.(2016)Valenzuela-Dom\'{\i}nguez,
  Krebs, and Franke}]{frankeBernstein}
Valenzuela-Dom\'{\i}nguez, E., Krebs, J. T.~N., Franke, J.~E., 2016. A
  {B}ernstein inequality for spatial lattice processes. arXiv preprint
  arxiv:1702.02023.

\bibitem[{van~de Geer(2001)}]{vandeGeer2001}
van~de Geer, S., 2001. Least squares estimation with complexity penalties.
  Mathematical Methods of Statistics 10~(3), 355--374.

\bibitem[{White and Wooldridge(1991)}]{white1991some}
White, H., Wooldridge, J., 1991. Some results on sieve estimation with
  dependent observations. Nonparametric and Semiparametric Methods in
  Economics, 459--493.

\bibitem[{Withers(1981)}]{Withers1981}
Withers, C.~S., 1981. Conditions for linear processes to be strong-mixing.
  Zeitschrift f{\"u}r Wahrscheinlichkeitstheorie und Verwandte Gebiete 57~(4),
  477--480.

\end{thebibliography}

\end{document}